\documentclass[11pt]{amsart}
\theoremstyle{plain}
\newtheorem{thm}{Theorem}[section]
\newtheorem{theorem}[thm]{Theorem}

\newtheorem{lemma}[thm]{Lemma}

\newtheorem{proposition}[thm]{Proposition}
\theoremstyle{definition}
\newtheorem{remark}[thm]{Remark}

\newtheorem{notation}[thm]{Notation}

\newtheorem{definition}[thm]{Definition}

\numberwithin{equation}{section}

\newcommand{\sH}{{\mathcal H}}

\newcommand{\sL}{{\mathcal L}}

\newcommand{\sO}{{\mathcal O}}

\newcommand{\sW}{{\mathcal W}}
\newcommand{\sX}{{\mathcal X}}
\newcommand{\sY}{{\mathcal Y}}

\newcommand{\A}{{\mathbb A}}

\newcommand{\C}{{\mathbb C}}

\newcommand{\BP}{{\mathbb P}}

\newcommand{\Z}{{\mathbb Z}}
\newcommand{\bm}{{\mathbf{m}}}

\newcommand{\fg}{{\mathfrak g}}

\newcommand{\fp}{{\mathfrak p}}

\newcommand{\fn}{{\mathfrak n}}
\newcommand{\fa}{{\mathfrak a}}

\newcommand{\fh}{{\mathfrak h}}

\newcommand{\fc}{{\mathfrak c}}

\newcommand\p{\partial}

\newcommand\gr{\rm gr}

\def\Sym{\mathop{\rm Sym}\nolimits}

\def\Hom{\mathop{\rm Hom}\nolimits}

\title[Contact fundamental forms and adjoint varieties]{Contact fundamental forms and \\ adjoint varieties}

\author{Baohua Fu and Jun-Muk Hwang}

\thanks{Baohua Fu was supported by the National Key Research and Development Program of China (No. 2025YFA1017302). Jun-Muk Hwang was supported by the Institute for Basic Science (IBS-R032-D1). }

\begin{document} 

\begin{abstract}

We introduce contact symbol systems, a noncommutative analogue of symbol
systems for projective fundamental forms, by replacing the polynomial
algebra on a vector space by the graded dual of the universal enveloping
algebra of a Heisenberg algebra.  For a complex projective submanifold equipped
with a contact structure, we define contact fundamental forms and prove that,
at a general point, they form a contact symbol system, which gives a contact version of the classical 
result due to \'E. Cartan. Conversely, we prove that every
 contact symbol system can be realized as the contact fundamental forms of a projective variety with a
dense open Heisenberg orbit, called the Heisenberg-symmetric
variety associated to the contact symbol system.  We show that 
the closure of a  projectivized nilpotent orbit in a simple Lie algebra is Heisenberg-symmetric if and only if it is the adjoint
variety, namely, the projectivization of the minimal nilpotent orbit.  For adjoint varieties of non-symplectic simple
Lie algebras, we prove the contact analogue of the Landsberg--Manivel strict
prolongation property by using Yamaguchi's prolongation theory.

\end{abstract}

\maketitle


\section{Introduction}
We work over the complex numbers. All varieties are assumed to be irreducible. For a vector space $V$, its projectivization $\BP V$ is the set of one-dimensional subspaces of $V$. 

\medskip To motivate our results, let us recall a few basic facts on fundamental forms of projective varieties. It is convenient to start with the following definition.

\begin{definition}\label{d.symbol}
Let $W$ be a vector space and let $\Sym W^* = \oplus_{k \in \Z} \Sym^k W^*$ be the graded ring of polynomial functions on $W$, regarded as an algebraic variety.  Here, we use the convention $\Sym^k W^* =0$ if $k<0$.
\begin{itemize} \item[(i)] For each $w \in W$, let $\p_w$ be the vector field on $W$ corresponding to $w$ under the canonical trivialization $T W = W \times W$ and regard it as an operator $\p_w \in \Hom(\Sym W^*, \Sym W^*) $ sending $\Sym^{k} W^*$ to $\Sym^{k-1} W^*$ for any $k$.  \item[(ii)]  A finite-dimensional graded vector subspace $$S = \oplus_{k \in \Z} S^k \subset \Sym W^*, \ S^k \subset \Sym^k W^*,$$ is called a {\em symbol system  on $W$} if $S^0 = \C, \ S^1 = W^*$ and  $\p_w S^k \subset S^{k-1}$ for any $k\geq 2$ and $ w \in W$. 
 \end{itemize} \end{definition}

 The most important examples of symbol systems are systems of fundamental forms of a submanifold in  projective space (see  \cite[Definition 2.4]{FH}), from the following result of \'E. Cartan (\cite[Theorem 3.3]{FH}, \cite[p.63]{LM03}).
 
 \begin{theorem}\label{t.Cartan}
 Let $M \subset \BP^N$ be a locally closed submanifold in projective space (for example, the smooth locus of a projective variety). Then for a general  point $z \in M$, the system of fundamental forms $S_{z}$ at $z$ is a symbol system on the tangent space $T_z M$.  \end{theorem}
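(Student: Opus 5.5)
We will prove the statement by realizing the fundamental forms through a local graph description of $M$ and then differentiating in $z$. Choose affine coordinates on $\BP^N$ near a general point $z \in M$ so that $z$ is the origin and $T_zM$ is a coordinate subspace $W \cong \C^n$. Locally $M$ is then the graph of a holomorphic map $f=(f^{n+1},\dots,f^N)$ of $x \in W$, with $f(0)=0$ and $df(0)=0$. Following \cite[Definition 2.4]{FH}, the system $S_z=\oplus_k S^k_z$ is defined through the osculating filtration. Here $S^k_z \subset \Sym^k T^*_zM$ is the image of the $k$-th fundamental form $N^k_z{}^* \to \Sym^k T_z^*M$, where $N^k_z = T^{(k)}_z/T^{(k-1)}_z$ are the successive quotients of the osculating spaces. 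In coordinates, $S^k_z$ is spanned by the degree-$k$ Taylor components of the functions $\lambda\circ f$, after we normalize away lower-order terms.

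This normalization is done inductively, by a projective (affine) change of ambient coordinates. We choose coordinates adapted to the osculating flag, so that the normal coordinates split into blocks of order $k=2,3,\dots$. In the block of order $k$, the functions $f$ vanish to order $k$ and their $k$-jets span $S^k_z$. The conditions $S^0=\C$ and $S^1=W^*$ then hold by definition.

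The key step is the derivative relation $\p_w S^k_z \subset S^{k-1}_z$. For this we use genericity. On a dense open subset of $M$, the ranks $\dim T^{(k)}_y$ are constant for all $k$, since they are lower semicontinuous and bounded. Hence the osculating spaces form holomorphic vector bundles on a neighborhood $U$ of $z$.

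Take a normal coordinate $g$ in the order-$k$ block. Then $g|_M$ vanishes to order $k$ at $z$. Moreover, at every nearby $y$ the $(k-1)$-st osculating space $T^{(k-1)}_y$ varies holomorphically. We express the $(k-1)$-jet of $g$ at $y$ in terms of $\p_w$ applied to $g$ at $z$. Concretely, for a point $y=z+tw$, we expand $\frac{d}{dt}\big|_{t=0}$ of the $(k-1)$-jet of $g|_M$ at $y$. Since $g$ annihilates $T^{(k-1)}_z$ and $T^{(k-1)}$ has constant rank, the derivative of the annihilator condition shows the following: the leading term $\p_w(\text{$k$-jet of } g)$ lies in the span of the $(k-1)$-jets of functions annihilating $T^{(k-2)}_z$. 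In other words, it lies in $S^{k-1}_z$, once we reduce modulo lower-order contributions. Equivalently, in the language of \cite{LM03}: $\p_w$ of the $k$-th fundamental form is the $(k-1)$-st fundamental form composed with the derivative of the moving flag, which is the standard Cartan computation (a prolongation of the order-$(k-1)$ relations).

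The main obstacle is handling the lower-order correction terms carefully. After the adapted coordinate change, the $(k-1)$-jet at a moving point $y$ contains contributions from $f$-components of order $<k$. One must show that these contributions cancel or land in $S^{k-1}_z$, and this is exactly where constancy of the ranks (the generality of $z$) is used. I would first try to handle this by differentiating the identity $\langle \lambda_y, T^{(k-1)}_y\rangle=0$, where $\lambda_y$ is a holomorphic family of annihilators through $\lambda_z=g$. The derivative of $\lambda_y$ then contributes exactly an element of $(T^{(k-2)}_z)^\perp$. It remains to check that this matches $\p_w$ on the symbol level.
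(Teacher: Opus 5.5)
Your last paragraph points at the right mechanism, but the step that carries the whole content of the theorem, namely $\p_w F\in S^{k-1}_z$ whenever $F\in S^k_z$, is never carried out. You end with ``it remains to check that this matches $\p_w$ on the symbol level'' and leave the ``lower-order correction terms'' as an open obstacle.

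The middle paragraph cannot supply this step as written. It differentiates, along $y=z+tw$, the $(k-1)$-jet at $y$ of the \emph{fixed} function $g|_M$. That only produces derivatives of $g$ and says nothing about $S^{k-1}_z$. The constant-rank hypothesis can enter only through a moving family of linear forms, which you introduce at the very end but do not use. The adapted coordinates and the block decomposition are beside the point. Once you move the form, no correction terms appear.

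Here is the missing computation. Let $V:=H^0(\BP^N,\sO(1))$, and trivialize $\sL$ near $z$ by dividing by a fixed form nonvanishing at $z$. Use graph coordinates $x\in T_zM$ centered at $z$. Let $V^k_y\subset V$ be the forms whose restriction vanishes to order $\geq k$ at $y$; this is the kernel of the $(k-1)$-jet map at $y$, i.e.\ the annihilator of $T^{(k-1)}_y$.

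Genericity of $z$ means this jet map has constant rank near $z$. So the $V^k_y$ form a holomorphic subbundle of the trivial bundle with fiber $V$. Hence any $g\in V^k_z$ with degree-$k$ Taylor component $F$ extends to a holomorphic family $g_y\in V^k_y$ with $g_z=g$. Put $\dot g:=\frac{d}{dt}\big|_{t=0}\,g_{tw}\in V$. Differentiating the identities $\partial^\alpha g_{tw}(tw)=0$ for $|\alpha|\le k-1$ at $t=0$ gives
\[
\partial^\alpha\dot g(0)\;=\;-\,\p_w\partial^\alpha g(0),\qquad |\alpha|\le k-1 .
\]
For $|\alpha|\le k-2$ the right side vanishes because $g$ vanishes to order $k$, so $\dot g\in V^{k-1}_z$. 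For $|\alpha|=k-1$, the identity says the degree-$(k-1)$ Taylor component of $\dot g$ is $-\p_w F$. Hence $\p_w F\in S^{k-1}_z$.

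With this filled in, your argument is a dual form of the route the paper indicates. The paper does not prove this theorem; it cites \cite{FH}, \cite{LM03}. Its proof of Theorem~\ref{t.cCartan} adapts Se-ashi's argument, developed in Section~\ref{s.Spencer}:
\begin{itemize}
\item On the open set where $\sW^k=j^k(W)$ has constant rank, Spencer's operator kills $k$-jets of sections and obeys the Leibniz rule.
\item Hence it maps $\sO(\sW^k)$ into $\sO(\sW^{k-1})$.
\item It acts as $-\delta$ on the symbol part.
\item Therefore $S^k=\sW^k\cap\Sym^kT^*$ (the analogue of Lemma~\ref{l.cap}) is preserved.
\end{itemize}
You move the linear form rather than the jet section. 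Both arguments use exactly the same constant-rank hypothesis. Yours avoids constructing $D$ and verifying its Leibniz rule.
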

 
 The converse of Theorem \ref{t.Cartan} is the following result from \cite[Section 3]{FH}.

  \begin{theorem}\label{t.FH}
For any symbol system $S \subset \Sym W^*,$ there is a projective variety $Z^S \subset \BP S^* = \BP^N, N+1 = \dim S$ with a Zariski-open subset $M$ in the smooth locus of $Z^S$ with the following properties.
\begin{itemize}
\item[(i)] For any $z \in M$, the system of fundamental forms $S_z \subset \Sym T^*_z M$ at $z$ is isomorphic to $S \subset \Sym W^*$ via a linear isomorphism $T_z M \cong W$.
\item[(ii)] For any $z \in M$, there exists a subgroup $\C^* \subset {\rm PGL}(\C^{N+1})$,  whose  action on $\BP^N$ preserves $M$ with $z$ as an isolated fixed point, such that the induced $\C^*$-action on $T_z M$ is by scalar multiplications.
    \end{itemize}
    Moreover, any nondegenerate projective variety $Z \subset \BP^N$ with a Zariski-open subset $M$ in its smooth locus, satisfying  two conditions (i) and (ii) is isomorphic to $Z^S \subset \BP S^*$ by a projective linear isomorphism $\BP^N \cong \BP S^*$. \end{theorem}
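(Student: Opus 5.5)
The construction will be explicit. I will realize $Z^S$ as the closure of an orbit of the additive group $W$ acting unipotently on $\BP S^*$, and then check (i), (ii) and uniqueness in turn.

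\textbf{Construction.} Let $W$ act on $S$ by $w \cdot f = \exp(\p_w) f$, that is, $f(x) \mapsto f(x+w)$. This is well defined on $S$ because $S$ is stable under every $\p_w$; the conditions $S^0=\C$ and $S^1 = W^*$ make this automatic in low degrees. The action is unipotent, so the dual action on $S^*$ and hence on $\BP S^*$ is algebraic. Let $o \in \BP S^*$ be the line spanned by evaluation at $0$, i.e. by the dual vector of $1 \in S^0$. Define
\[
\phi : W \to \BP S^*, \qquad \phi(x) = [\mathrm{ev}_x],
\]
where $\mathrm{ev}_x(f) = f(x)$. This is the orbit map of $o$. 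Set $Z^S = \overline{\phi(W)}$ and $M = \phi(W)$. Since $S^1 = W^*$, the map $\phi$ is an injective immersion, so $M$ is open in $Z^S$ and smooth. Homogeneity gives the same local picture at every point of $M$.

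\textbf{Fundamental forms and (i).} Work at $z = o$. In the affine chart where the $S^0$-coordinate equals $1$, the map $\phi$ is $x \mapsto (f(x))_{f}$, with $f$ running over a graded basis of $S^{\ge 1}$. The osculating filtration at $o$ is then spanned by derivatives of order $\le k$ at $0$, and those are dual to $S^{\le k}$. The $k$-th fundamental form is therefore the projection $\Sym^k T_oM \to (S^k)^*$, whose image of the dual is exactly $S^k \subset \Sym^k W^*$. Here one uses the definition of fundamental forms from \cite{FH}, and the fact that an element of $S^k$ is homogeneous of degree $k$, so its $k$-jet at $0$ is itself. Translating by $W$ carries this to every $z \in M$.

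\textbf{The $\C^*$-action and (ii).} Let $t \in \C^*$ act on $S$ by $t^{-k}$ on $S^k$. Then $\phi(tx) = t\cdot\phi(x)$ up to scaling, so this $\C^*$ preserves $M$, fixes $o$, and acts on $T_oM$ by scalars. The point $o$ is an isolated fixed point, because $\phi(x)$ is fixed only when $x = 0$. At other points of $M$, conjugate by translations.

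\textbf{Uniqueness.} This is the main obstacle. Given $Z$ with (i) and (ii) at a point $z$, I will use the $\C^*$ to linearize. First decompose $\C^{N+1}$ into weight spaces. I expect that, after normalizing, the weights are $0,-1,-2,\dots$, and that the weight-$(-k)$ space is identified with the $k$-th osculating quotient, i.e. with $(S^k)^*$ via (i). The essential input is that $z$ is isolated and $T_zM$ carries weight $1$. In the affine chart at $z$, the $\C^*$-equivariance forces $Z$ to be locally the image of a $\C^*$-equivariant map $T_zM \to \C^N$. The fact I will need is that the component in weight $k$ is then homogeneous of degree $k$, so the map is polynomial and $Z$ is the closure of its image. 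One subtlety is that the weight-$1$ piece coincides with $T_zM$; this requires a formal/analytic linearization argument near the attractive fixed point (Sternberg-type, but here easy since all weights are positive integers and resonances give polynomial terms). Nondegeneracy then shows that the weight-$k$ component spans the $k$-th normal space. Finally, by (i) the degree-$k$ coordinates are exactly a basis of $S_z^k \cong S^k$. This gives the projective linear isomorphism sending $Z$ to $Z^S$.
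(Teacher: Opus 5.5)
Your proposal is correct and follows essentially the route the paper has in mind. The paper only cites \cite{FH} for this statement, but its proof of the contact analogue (Propositions \ref{p.Hsymmetric} and \ref{p.unique}) runs the same way: $Z^S$ is the orbit closure of the translation action on the linear system $S$, the grading gives the dilation $\C^*$, and uniqueness comes from splitting the vanishing-order filtration of the ambient linear system into $\C^*$-weight spaces, so that in an equivariant chart the restricted system consists of homogeneous polynomials and hence equals its associated graded $S_z\simeq S$. (The weights you only ``expect'' follow from the equivariance of $F^k/F^{k+1}\hookrightarrow\Sym^kT_z^*M\otimes\sL_z$ together with effectiveness of the $\C^*$.)

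The only real difference is how you get the equivariant chart. You appeal to a local ``Sternberg-type'' linearization, whereas the paper applies the Bialynicki-Birula decomposition to the attracting affine chart $Z\cap A_z$, after checking that this chart lies in $M$. No resonance analysis is needed either way: a holomorphic $\C^*$-action is linearizable at a fixed point by averaging over $S^1$. Alternatively, you can write $M$ near $z$ as a graph over the weight-one coordinates and read off homogeneity directly from invariance.
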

    
The projective variety $Z^S \subset \BP S^*$ in Theorem \ref{t.FH} is called  {\em the Euler-symmetric variety}  associated to the symbol system $S$. The best-known examples of Euler-symmetric varieties are  minimal embeddings of irreducible Hermitian symmetric spaces. Landsberg and Manivel discovered in \cite[Theorem 3.1]{LM03} that the system of fundamental forms of the minimal embedding of an irreducible Hermitian symmetric space has the following remarkable property, which they called the strict prolongation property. 
    
    \begin{theorem}\label{t.LM}
    Let $z \in Z \subset \BP^N$ be a point in the minimal embedding of an irreducible Hermitian symmetric space. Then for the system of fundamental forms $S_z = \oplus_{k \geq 0} S_z^k, S_z^k \subset \Sym^k W^*, W= T_z Z$,   we have 
    $$ S_z^{k+1} = \{ F \in \Sym^{k+1} W^* \mid \p_w F \in S_z^k \mbox{ for all } w \in W\}$$ for all $k \geq 2$. \end{theorem}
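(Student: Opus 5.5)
Since $Z$ is a homogeneous Euler-symmetric variety, I would compute $S_z$ explicitly as the symbol system attached to the grading of $\fg$, and then check the strict prolongation property by a Lie-algebraic prolongation argument.

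\emph{Setup.} Let $Z = G/P \subset \BP V$ be the minimal embedding of an irreducible Hermitian symmetric space, where $V$ is the irreducible $G$-module of highest weight $\omega$ and $P$ is a maximal parabolic associated to a cominuscule root. The Lie algebra carries a $\Z$-grading $\fg = \fg_{-1}\oplus\fg_0\oplus\fg_1$ with $\fg_{-1}$ abelian. Put $W = \fg_{-1} \cong T_z Z$. Because $\fg_{-1}$ is abelian, $\exp(\fg_{-1})$ is a vector group acting freely on a dense open orbit through $z = [v_0]$, where $v_0$ is a highest weight vector. The orbit map is
$$w \mapsto \Big[\sum_k \tfrac{1}{k!}\, w^k v_0\Big],$$
with $w$ acting on $V$ through the grading. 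The grading element of $\fg_0$ produces the $\C^*$ of Theorem \ref{t.FH}(ii). The second fundamental form and its successors are the components of this polynomial map, so
$$S_z^k = \{\, \langle \phi, w^k v_0\rangle \mid \phi \in (V_{-k})^*\,\},$$
where $V = \oplus_k V_{-k}$ is the grading of $V$ by eigenvalues of the grading element. Equivalently, $(S_z^k)^*$ is the image of $\Sym^k \fg_{-1} \to V_{-k}$, $w_1\cdots w_k \mapsto w_1\cdots w_k v_0$, which is a $\fg_0$-module map.

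\emph{Reformulation.} By duality, the strict prolongation identity is the statement that the natural map $K_{k+1} \to K_k \otimes W$ is injective in a specific sense. Here $K_k$ is the kernel of $\Sym^k W \to V_{-k}$. Concretely, I need to show: if $F\in \Sym^{k+1}W^*$ vanishes on $\ker(\Sym^{k+1}W\to V_{-k-1})$ after contraction, which is what $\p_w F \in S_z^k$ for all $w$ gives, then $F \in S_z^{k+1}$. The cleanest route is Kostant's theorem. The ideal of $G/P$ is generated by quadrics, and the kernel of $\Sym^k W \to V_{-k}$ is generated in degree $2$ by $K_2$. Then $S_z$ is the annihilator of the ideal $(K_2) \subset \Sym W$. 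For an ideal generated in degree $2$, the annihilator of its degree-$(k+1)$ piece is exactly the set of $F$ with all $\p_w F$ annihilating the degree-$k$ piece, at least for $k\ge 2$. This holds because $(K_2)_{k+1} = W\cdot (K_2)_k$ when $k+1\ge 3$.

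\emph{Key steps, in order:}
\begin{enumerate}
\item Identify $S_z^k$ with the annihilator of $I_k$, where $I = \ker(\Sym W \to V)$ is the affine-chart ideal of the cone over $Z$ restricted to $T_zZ$. The Euler-symmetric structure makes the chart ideal homogeneous.
\item Show $I$ is generated in degree $2$. This follows from the Kostant/Lichtenberg theorem that the ideal of the closed orbit $G/P\subset\BP V$ is generated by quadrics, passed to the affine chart via the $\exp(\fg_{-1})$-parametrization. The relevant degree-$2$ part is $K_2 = \ker(\Sym^2 W \to V_{-2})$.
\item Conclude by the duality argument above.
\end{enumerate}

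The main obstacle is step (2): transferring quadratic generation of the homogeneous ideal to generation in degree $2$ of the dehomogenized, graded chart ideal. Dehomogenization can in principle mix degrees. I would handle this with the $\C^*$-action from Theorem \ref{t.FH}(ii), which makes the chart ideal bigraded compatibly. If needed, I would fall back on case-by-case verification for the Grassmannians, quadrics, spinor varieties, Lagrangian Grassmannians, $E_6$ and $E_7$ cases, as Landsberg--Manivel do. The condition $k\ge 2$ matters because $S^2$ need not equal the full prolongation of $S^1=W^*$.
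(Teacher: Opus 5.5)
Your duality reduction is correct, though misworded: what you need is surjectivity of the multiplication map $W\otimes K_k\to K_{k+1}$, not an injectivity. The right-hand side of the identity is $(W\cdot K_k)^{\perp}$, so the theorem for all $k\ge 2$ is equivalent to the ideal $K=\bigoplus_k K_k=\mathrm{Ann}_{\Sym \fg_{-1}}(v_0)$ being generated by $K_2$.

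That generation statement \emph{is} the theorem, and your step (2) does not establish it. $K$ is not the ideal of any affine chart of $Z$. The chart $Z\cap\{\ell_0\neq 0\}$ is the graph of $w\mapsto (wv_0,\tfrac12 w^2v_0,\dots)$, so its ideal is just the graph ideal. By contrast, $K$ is the Artinian apolar ideal of the finite-dimensional space $S_z$, with $\Sym W/K\cong V$. Quadratic generation of $I(Z)\subset \Sym V^*$ does not transfer to $K$, and no bigrading argument can fix this, because the implication is false. Take $v_2(\BP^n)\subset \BP(\Sym^2\C^{n+1})$: it is a closed orbit $G/P$ with the same abelian $\fg_{-1}$, it is Euler-symmetric, and its ideal is generated by quadrics. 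Yet $S_z=\C\oplus W^*\oplus \Sym^2W^*$, so $K=\Sym^{\ge 3}W$ has $K_2=0\ne K_3$. Strict prolongation indeed fails at $k=2$: the prolongation of $S_z^2$ is $\Sym^3W^*$, while $S_z^3=0$. Likewise $v_3(\BP^n)$ fails at $k=3$. Every hypothesis your step (2) actually uses holds in these examples.

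So a correct proof must use that the embedding is minimal, i.e.\ that $v_0$ has the cominuscule fundamental weight $\omega_j$. A natural starting point is the presentation of $V_{\omega_j}$ as $U(\fn^-)$ modulo the left ideal generated by $X_{-\alpha_i}$ ($i\neq j$) and $X_{-\alpha_j}^2$. The real difficulty is showing that the intersection of this left ideal with the subalgebra $U(\fg_{-1})=\Sym\fg_{-1}$ is still generated in degree $2$.

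The paper does not prove this theorem itself; it quotes Landsberg--Manivel. Their proof supplies exactly this kind of representation-theoretic input, via Littelmann's results on the enveloping algebra of the unipotent radical of a Borel subalgebra. The ``Lie-algebraic prolongation argument'' announced in your first sentence never appears. The paper's Yamaguchi-type argument for the contact analogue has no evident counterpart here: it relies on the linear system being $\fg$ itself, closed under the bracket of contact vector fields, whereas here the linear system is $V_{\omega_j}^*$. Your case-by-case fallback would have to prove degree-$2$ generation of $K$ for each family, and the proposal gives no indication how.
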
 
 
\medskip
The goal of this paper is to introduce a noncommutative analogue of Definition \ref{d.symbol}, replacing the vector space $W$ by the Heisenberg algebra and to formulate noncommutative analogues of Theorems \ref{t.Cartan}, \ref{t.FH} and \ref{t.LM} when there is a  contact structure on the submanifold of projective space. 

The correct analogue of Definition \ref{d.symbol} turns out to be the following.

\begin{definition}\label{d.Csymbol}
Let $\fh = \fh_1 + \fh_2$ be a graded Heisenberg algebra.  Let $U(\fh) = \oplus_{k \geq 0} U_k(\fh)$ be its  universal enveloping algebra with the grading induced by the grading of $\fh$ and let $U(\fh)^* = \oplus_{k \geq 0} U_k(\fh)^*$ be its graded dual.
We can identify $U(\fh)^*$ with the space of polynomial functions on the Heisenberg group $\sH$ of $\fh$ by regarding $U(\fh)$ as the space of left-invariant differential operators on $\sH$ (see Remark \ref{r.jet}). Let $\iota:\fh_2\hookrightarrow U_2(\fh)$ be the natural inclusion and let $\rho_2:=\iota^*:U_2(\fh)^*\to\fh_2^*$ be the restriction map.  
 A finite-dimensional graded subspace $$S = \oplus_{k \geq 0} S^k \subset U(\fh)^*, S^k \subset U_k(\fh)^*,$$ is called a {\em contact symbol system} on $\fh$,
if $S^0 = \C, S^1 = \fh_1^*$, $\rho_2(S^2)=\fh_2^*$, and $S$ is preserved under the operation of right-invariant vector fields on $\sH$.  The {\em rank} of a contact symbol system $S$ is ${\rm rk}(S):=\max\{k\mid S^k\ne0\}.$
  \end{definition}

 The operator $\p_w$ in Definition \ref{d.symbol} is exactly right-invariant vector fields on the algebraic group $W$ of the commutative Lie algebra $W$. In this sense, Definition \ref{d.Csymbol} is a noncommutative version of Definition \ref{d.symbol}.


When $z \in M \subset \BP^N$ is a point on a submanifold equipped with a contact structure $H \subset TM$, we introduce in Definition \ref{d.CF} the {\em system of contact fundamental forms } at $z$, which is a finite-dimensional  graded subspace $S_z \subset U(\fn_z)^*$ for a graded Heisenberg algebra $\fn_z = \fn_{z,1} + \fn_{z,2}$, determined by the contact structure $H$. 
Then we prove the following analogue of Theorem \ref{t.Cartan}.

\begin{theorem}\label{t.cCartan}
 Let $M \subset \BP^N$ be a locally closed submanifold equipped with a contact structure $H \subset TM$. Then for a general  point $z \in M$, the system of contact fundamental forms $S_{z}$ at $z$ is a contact symbol system on $\fn_z$.   \end{theorem}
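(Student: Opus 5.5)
We will prove Theorem \ref{t.cCartan} by imitating the proof of Theorem \ref{t.Cartan}, with the flat structure on $W$ replaced by a contact (Darboux-type) frame. The contact fundamental forms $S_z$ of Definition \ref{d.CF} are presumably the graded pieces of the filtration of $\sO(1)$-sections, or of the osculating spaces, with the weights adapted to $H$: directions in $H$ get weight $1$ and transversal directions weight $2$. Each piece is read off as a subspace of $U_k(\fn_z)^*$ by letting left-invariant operators act on the local affine coordinates of the embedding. Throughout, $\fn_z = H_z \oplus T_zM/H_z$ is the symbol Heisenberg algebra of the contact distribution, with bracket given by the Levi form.

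\textbf{Step 1 (local model).} Near a general point $z$, choose Darboux coordinates. These give a local biholomorphism $\phi$ from a neighbourhood of the identity in the Heisenberg group $\sH_z$ of $\fn_z$ onto a neighbourhood of $z$ in $M$, carrying the left-invariant distribution generated by $\fn_{z,1}$ onto $H$. Pulling back the affine coordinates $(1, f_1,\dots,f_N)$ of $\BP^N$, normalized at $z$, gives a linear space $L$ of holomorphic functions on this neighbourhood in $\sH_z$. Expand these functions by their weighted Taylor expansion along left-invariant operators, so that the weight-$k$ jet lives in $U_k(\fn_z)^*$ (Remark \ref{r.jet}). Then $S_z$ is the associated graded of $L$ with respect to the weighted order-of-vanishing filtration at the identity. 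From this description we get $S^0=\C$. We get $S^1=\fn_{z,1}^*$ because $M$ is immersed and $H$ is spanned by weight-$1$ directions. We get $\rho_2(S^2)=\fn_{z,2}^*$ because the transversal direction $T_zM/H_z$ must also be detected injectively by the differential of the embedding; any function with linear part in the Reeb direction has weight-$2$ leading term restricting nontrivially to $\fh_2$.

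\textbf{Step 2 (invariance under right-invariant vector fields).} This is the main point, and the place where generality of $z$ is used. In the classical case one argues as follows. The space $L$ is a finite-dimensional space of functions, so differentiating it along a coordinate vector field $\p_w$ lands in $L$ modulo lower-order terms. More precisely, at a general point the dimensions of the osculating filtration are locally constant, so the filtration $L_{\ge k}$ forms holomorphic vector bundles. Differentiating a section of the bundle of functions vanishing to order $k$ at the moving point $x$ along $x$ gives a function vanishing to order $k-1$. Taking leading terms shows $\p_w S^k\subset S^{k-1}$.

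We will do the same with weighted orders. Consider the family $L_x$ of these spaces as $x$ moves along a curve $\exp(tX)\cdot e$ for $X\in\fn_{z,1}$, and transport everything back to $e$ by left translation. Differentiating the weight-$k$ leading term with respect to $t$ is exactly the action of the right-invariant vector field generated by $X$ on $U(\fn_z)^*$. Since right-invariant fields commute with left-invariant operators, the leading weighted terms behave correctly.

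Weighted order is constant on a dense open set, by semicontinuity. Hence $\{L_{x,\ge k}\}$ varies holomorphically there, and the derivative of a family in $L_{x,\ge k}$ lies in $L_{x,\ge k-1}$. Passing to leading terms at $z$ gives $R_X S^k\subset S^{k-1}$ for $X\in\fn_{z,1}$. The weight-$2$ generator is a bracket of weight-$1$ generators, so invariance under the full right-invariant Lie algebra follows.

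\textbf{Main obstacle.} The hard part is that Darboux coordinates only identify $M$ with $\sH_z$ to first order in the weighted sense. The curvature of the contact structure is zero, since contact structures are flat, but the Darboux map must be chosen holomorphically in the base point $x$. The subtle point is to check that $S_x$ at leading order is independent of the choice of Darboux chart. We will first verify that a change of contact chart fixing $x$ acts on $U(\fn)^*$ by a filtered automorphism whose associated graded is induced by a graded automorphism of $\fn_z$. This makes $S_x$ well-defined and lets the family argument above go through.
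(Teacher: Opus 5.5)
Your proof is correct, but Step 2 takes a genuinely different and more elementary route than the paper. The paper builds contact jet bundles $J^k$ and Spencer-type operators $D_{X_a},D_{Y_a},D_T$ in Darboux coordinates, and proves the Leibniz rule and $D(j^k\varphi)=0$ (Theorem \ref{t.Spencer}). It checks that on the symbol subbundle these operators are $-\delta_{\sX_a},-\delta_{\sY_a},-\delta_T$ (Lemma \ref{l.Seashi}). Then, at a general point where $\sW^k=j^k(W)$ is a subbundle, it shows that $D$ maps $\sO(\sW^k)$ into $\sO(\sW^{k-1})$ and intersects with $U_k(\fn)^*$ via Lemma \ref{l.cap}.

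You work on the dual, kernel side. At a general point, $L_{x,\ge k}=\ker(j^{k-1}_x|_L)$ is a holomorphic subbundle of the trivial bundle with fiber $L$. So $f\in L_{z,\ge k}$ with leading term $P$ extends to a family $f_s\in L_{x_s,\ge k}$, where $x_s=\exp(sv)$ and $v\in\fn_{z,1}$. Put $Q_s(g):=f_s(x_sg)\in\bm^k_0$, so that $\dot f:=\frac{d}{ds}f_s$ at $s=0$ equals $\dot Q-\sX_vQ_0$. Since $\sX_v$ is weighted homogeneous of degree $-1$ in exponential coordinates and $\dot Q\in\bm^k_0$, the element $\dot f$ of the fixed space $L$ lies in $\bm^{k-1}_0$ and has leading term $-\sX_vP$. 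Hence $\sX_vS^k_z\subset S^{k-1}_z$.

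Write it this way in the final version: the term $-\sX_vP$ comes from differentiating $f_s\in L$, not from "differentiating the leading term". Using $[\sX_a,\sY_a]=-T$ for the weight-two direction is a valid shortcut; the paper instead handles $D_T$ separately. Your argument needs only the weighted Taylor formula. The paper's buys a systematic contact analogue of Spencer's operator and of Se-ashi's jet-theoretic proof, including a characterization of holonomic sections that the theorem itself does not need.

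The ``main obstacle'' you describe is not there. The holomorphic Darboux theorem, which the paper also uses, gives an honest contactomorphism of a neighborhood of $z$ onto an open subset of $\sH$ with its left-invariant contact structure, not a first-order identification. Left translations then give contact charts centered at every nearby $x$, depending holomorphically on $x$. Since $S_x$ is defined intrinsically from $H$ (Definition \ref{d.CF}) and contactomorphisms preserve weighted vanishing order, no chart-independence check is needed; your proposed verification is true but superfluous.

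Also, state the generality condition precisely. Instead of ``weighted order is constant'', require constancy of $\dim L_{x,\ge k}$, equivalently of ${\rm rank}\, j^{k-1}_x|_W$, for all $k\le k_0+1$, where $k_0$ is chosen so that $j^{k_0}_x$ is injective on $W$ near $z$.
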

 
The converse of Theorem \ref{t.cCartan} is the following analogue of Theorem \ref{t.FH}.

  \begin{theorem}\label{t.cFH}
For a contact symbol system $S \subset U(\fh)^*$ on a graded Heisenberg algebra $\fh$,  there is a projective variety $Z^S \subset \BP^N = \BP S^*, N = \dim S -1$ with a Zariski-open subset $M$ in the smooth locus of $Z^S$ equipped with a contact structure $H \subset TM$, which satisfies the following properties.
\begin{itemize}
\item[(i)] For any $z \in M$, the system of contact fundamental forms $S_z \subset U(\fn_z)^*$ at $z$ is isomorphic to $S \subset U(\fh)^*$ via a graded Lie algebra isomorphism $\fn_z \cong \fh$.
\item[(ii)] For any $z \in M$, there exists a subgroup $\C^* \subset {\rm PGL}(S^*) = {\rm PGL}(N+1)$, whose action on $\BP^N$ preserves $M$ and the contact structure $H \subset TM$, and has $z$ as an isolated fixed point such that the induced $\C^*$-action on $H_z$ is by scalar multiplications.
    \end{itemize}
   Moreover, any nondegenerate projective variety $Z \subset \BP^N$ with a Zariski-open subset $M$ in its smooth locus equipped with a contact structure $H \subset TM$ satisfying  two conditions (i) and (ii) is isomorphic to $Z^S \subset \BP S^*$ by a projective linear isomorphism $\BP^{N} \cong \BP S^*$. \end{theorem}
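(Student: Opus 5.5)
The construction mirrors the Euler-symmetric case of Theorem \ref{t.FH}, with the vector group $W$ replaced by the Heisenberg group $\sH$. Since $S$ is preserved by right-invariant vector fields, which generate left translations, $S$ is a finite-dimensional $\sH$-submodule of the polynomial functions $U(\fh)^* = \C[\sH]$ under $(g\cdot f)(x) = f(g^{-1}x)$ or its variant. Dually, $\sH$ acts linearly on $S^*$. Let $o \in S^*$ be evaluation at the identity $e \in \sH$, that is, the functional dual to $S^0 = \C$ under the grading. We define
$$\phi: \sH \to \BP S^*, \qquad \phi(x) = [\mathrm{ev}_x],$$
set $Z^S := \overline{\phi(\sH)}$, and let $M := \phi(\sH)$, which is the orbit $\sH\cdot[o]$. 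The map $\phi$ is $\sH$-equivariant. Because $S^1 = \fh_1^*$ and $\rho_2(S^2) = \fh_2^*$ separate tangent directions at $e$ (in exponential coordinates, the linear coordinates on $\fh_1$ come from $S^1$ and those on $\fh_2$ come from $S^2$ modulo products), $\phi$ is an immersion at $e$. By equivariance and the fact that the coordinates separate points, $\phi$ is an injective immersion onto a locally closed orbit, hence open in $Z^S$. We take $H \subset TM$ to be the image of the left-invariant distribution generated by $\fh_1$; this is a contact structure because $[\fh_1,\fh_1] = \fh_2$ is nondegenerate.

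For (ii), the grading gives a $\C^*$-action $t\cdot x$ on $\sH$ by automorphisms, with weight $t^k$ on $\fh_k$. Acting on $S^*$ with weight $t^{-k}$ on $(S^k)^*$ makes $\phi$ equivariant. This action fixes $[o]$, acts on $H_{[o]} \cong \fh_1$ by scalars, and preserves $H$ because it acts by automorphisms. We then transport the action to any $z = \phi(g)$ by conjugating with $g$.

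For (i), we compute the contact fundamental forms at $[o]$. Definition \ref{d.CF} presumably defines $S_z$ as the image of the dual of the successive osculating filtration, using left-invariant (adapted) differential operators $U_k(\fn_z)$ applied to a local lift of $M$, modulo lower order. With the lift $x \mapsto \mathrm{ev}_x$, applying $u \in U_k(\fh)$ at $e$ gives the functional $f \mapsto (uf)(e)$ on $S$. The weight-$k$ part of this functional is exactly the pairing $U_k(\fh) \times S^k \to \C$. So the $k$-th contact fundamental form is the restriction $U_k(\fh) \to (S^k)^*$, and $S_z = S$ under $\fn_z \cong \fh$. Homogeneity under $\sH$ gives the same result at every $z \in M$.

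For uniqueness, we start from $Z$ with (i) and (ii). Condition (ii) supplies a $\C^*$-action with isolated fixed point $z$. We decompose $\C^{N+1}$ into weight spaces and show that they match the osculating filtration graded by $k$: because the action is by scalars on $H_z$ and preserves $H$, it acts with weight $2$ on $T_zM/H_z$. We then need to produce an $\sH$-action, or directly show that $M$ is the orbit closure parametrized by $x \mapsto \mathrm{ev}_x$ in suitable coordinates. The main obstacle is here. In the Euler-symmetric case, the $\C^*$-fixed point forces the Taylor expansion in linear coordinates to be exactly homogeneous, which recovers the affine chart as the graph of the fundamental forms. In the contact case, my first attempt is this: use the $\C^*$-action to construct weighted coordinates on $M$ near $z$ adapted to $H$, identified with exponential coordinates on $\sH$ via the contact structure (a Darboux-type normal form compatible with the weights). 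Then show that the affine embedding is given by weighted-homogeneous polynomials whose components span $S^k$, so that $M$ coincides with the $\sH$-orbit model. Matching these weighted coordinates with Heisenberg exponential coordinates, so that the contact fundamental forms read off as the given subspace of $U(\fh)^*$, is the delicate step. Nondegeneracy then gives $\dim S = N+1$ and the projective linear isomorphism $\BP^N \cong \BP S^*$.
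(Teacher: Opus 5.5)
Your existence half follows the paper's Proposition \ref{p.Hsymmetric}: the same map $\Phi_S$, left translations generated by right-invariant fields, immersivity from $S^1=\fh_1^*$ and $\rho_2(S^2)=\fh_2^*$, the Heisenberg dilation for (ii), and $S\cap\bm^k_0=\bigoplus_{r\ge k}S^r$ for (i). Your injectivity argument is a valid substitute for the paper's ``no finite subgroups'' argument: constants, $S^1$, and some $ct+Q(x,y)\in S^2$ with $c\neq0$ separate points. The genuine gap is the uniqueness statement. There you outline a strategy and explicitly leave its decisive step open, so nothing is proved. Even as a plan, ``weighted coordinates on $M$ near $z$'' is too weak. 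A local normal form cannot identify the closure $Z$ with $Z^S$; you need a Zariski-open weighted chart.

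The missing steps, all supplied in Proposition \ref{p.unique}, are these. (1) You assert that the weight spaces match the filtration; this must be proved. Normalize a lift of $\lambda$ to $W$ so that it acts trivially on $W^0_z/W^1_z$. After reparametrizing, the weight is $1$ on $\fn_{z,1}$, hence $2$ on $\fn_{z,2}$ by the Levi bracket, so $U_k(\fn_z)^*$ has weight $-k$. Equivariance of ${\rm CF}^k_z$ then gives weight $-k$ on $W^k_z/W^{k+1}_z$, and reductivity splits $W=\bigoplus_k W_k$. So $z$ is the unique weight-zero line in $W^*$, all other weights are positive, and the attracting set of $z$ is the whole affine chart $A_z=\{\ell_0\neq0\}$. (2) Globalization: $O_z:=Z\cap A_z$ lies in $M$, because $Z\setminus M$ is closed and $\C^*$-stable while every point of $A_z$ flows to $z$. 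Bialynicki-Birula then gives an equivariant isomorphism $O_z\simeq\A^{2m+1}$ with weights $1,\ldots,1,2$. (3) The normalization you call delicate is forced by the weights. Since $\mathrm{Pic}(O_z)=0$ and units are constant, a generator $\theta$ of $\mathrm{Ann}(H|_{O_z})$ is semi-invariant of weight $2$. Hence $\theta=c\,{\rm d}s+\sum a_{\alpha\beta}z_\alpha\,{\rm d}z_\beta$ with $c\neq0$. Replacing $s$ by $s+q(z)$ kills the symmetric part, and a linear symplectic change gives the Heisenberg form of Lemma \ref{l.coordi}(iv), turning $\lambda$ into the dilation. (4) The space $V=\{\ell/\ell_0|_{O_z}\mid \ell\in W\}$ is $\C^*$-stable, hence spanned by weighted-homogeneous polynomials, each equal to its own weighted initial form. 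So $V=\mathrm{gr}_zV=S_z\simeq S$; the isomorphism in condition (i) differs from the Darboux identification only by a graded automorphism of $\fh$, which is harmless. Nondegeneracy makes $W\to V$ injective, yielding $\BP W^*\simeq\BP S^*$ carrying $O_z$ onto $\Phi_S(\sH)$, and taking closures gives $Z\simeq Z^S$.
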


We call the projective variety $Z^S \subset \BP S^*$ in Theorem \ref{t.cFH}  {\em the Heisenberg-symmetric variety}  associated to the contact symbol system $S$. The name reflects the fact that $Z^S$ is an equivariant compactification of a Heisenberg group.

A well-known example of  a submanifold in $\BP^N$ equipped with a contact structure is the projectivization $\BP \sO \subset \BP \fg$ of a nonzero nilpotent coadjoint orbit $\sO \subset \fg^*$ in a simple Lie algebra $\fg$. We show in Theorem \ref{t.nilpotent} that the closure   $\BP \overline{\sO} $  in $\BP \fg^*$ of $\BP \sO$ is a Heisenberg-symmetric variety if and only if $\sO$ is a minimal nilpotent orbit. In the latter case, the variety $\BP \sO$ is closed and called the {\em adjoint variety} of the simple Lie algebra $\fg$. Then we prove the following analog of Theorem \ref{t.LM}.


\begin{theorem}\label{t.cLM} Let $\fg$ be a simple Lie algebra, not of symplectic type. 
Let $z \in Z \subset \BP \fg^*$ be a point on the adjoint variety $Z$ of $\fg$.  Then for the system of contact fundamental forms $S_z = \oplus_{k \geq 0} S_z^k$, $S_z^k \subset U_k(\fn_z)^*$,   we have 
    $$ S_z^{k+1}  =  \left\{ F \in U_{k+1}(\fn_z)^* \mid  \begin{array}{l}  \sX_v (F) \in S_z^k \mbox{ for all } v \in \fn_{z,1} \mbox{ and } \\  \sX_v (F) \in S_z^{k-1} \mbox{ for all } v\in \fn_{z,2} \end{array} \right\} $$ for all $k \geq 2$, where $\sX_v$ is the right-invariant vector field on the Heisenberg group of $\fn_z$ whose value at the origin is $v \in \fn_z$.  \end{theorem}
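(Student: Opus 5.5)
The plan is to turn the problem into a statement about Lie algebra cohomology and then apply Yamaguchi's prolongation theory. The first step is to describe the adjoint variety concretely as a Heisenberg-symmetric variety. Take the contact grading of $\fg$ coming from $z$:
$$\fg = \fg_{-2}+\fg_{-1}+\fg_0+\fg_1+\fg_2,$$
with $\dim \fg_{\pm 2}=1$ and $z=[\fg_{2}^*]$ (equivalently, via the Killing form, the highest root line). The Heisenberg algebra is then $\fn_z=\fg_{-1}+\fg_{-2}$; let $N$ denote its group. The orbit of $N$ through $z$ is dense, and the orbit map
$$n\mapsto n\cdot z\in \BP\fg^*$$
identifies the system of contact fundamental forms with the subspace of $U(\fn_z)^*$ spanned by the matrix coefficients $u\mapsto \langle u\cdot e, \xi\rangle$. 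Here $e$ spans the highest line and $\xi$ ranges over $\fg$. Grading by $\mathrm{ad}$ of the grading element gives $S_z^k\cong \fg_{2-k}^*$-type pieces, $k=0,\dots,4$, so the rank is $4$.

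It follows that the claimed equality is automatic for $k\ge 4$ once we show the right-hand side vanishes. It therefore suffices to treat $k=2,3,4$.

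The key step is to rephrase the right-hand side. An element $F\in U_{k+1}(\fn_z)^*$ satisfying the two conditions is a degree-$(k+1)$ component of a "prolongation" of the $\fn_z$-module $S_z$, viewed through its dual, the $U(\fn_z)$-module $S_z^*\cong\fg$. Indeed, $S^*=U(\fn_z)/\mathrm{Ann}$, and the right-invariant vector fields correspond to left multiplication. The condition that $\sX_v F\in S$ for all generators $v$ says exactly the following: $F$ defines a linear functional on $U(\fn_z)$ which vanishes on $\fn_z\cdot \mathrm{Ann}_{\le k}$ in degree $k+1$. The gap between the right-hand side and $S_z^{k+1}$ is thus measured by
$$\big(\mathrm{Ann}_{k+1}\big/\textstyle\sum_{v}v\,\mathrm{Ann}_{k+1-\deg v}\big)^*,$$
which is the space of degree-$(k+1)$ minimal generators of the annihilator ideal of the cyclic module $U(\fn_z)\cdot e^\vee$. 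In homological terms, these generators are $H_1(\fn_z,\fg)$, equivalently $H^1(\fn_z,\fg)$ by duality, in the appropriate grading. So the theorem reduces to the following statement: \emph{the relations of the cyclic $\fn_z$-module $\fg$, beyond those forced in low degree, all live in degree $\le 2$.} In other words, $H^1(\fg_{-},\fg)_{d}$ must vanish in the degrees corresponding to $k+1\ge 3$.

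The main step, and the main obstacle, is the cohomology computation. For this I would apply Kostant's theorem (Bott–Borel–Weil) as used in Yamaguchi's prolongation theory. $H^1(\fg_-,\fg)$ decomposes as $\bigoplus \fg_0$-modules indexed by simple roots $\alpha$ with $s_\alpha$ in the relevant Hasse diagram; concretely these are the lowest weight pieces $-s_\alpha(\theta)$-type components in $\fg_- {}^*\otimes \fg$. Their homogeneity degrees are computed from the coefficients of $\alpha$ in the contact grading. Yamaguchi's result says that, for contact gradings of $\fg$ not of type $C$, $H^1(\fg_-,\fg)$ is concentrated in degree $\le 0$. This is what makes $\fg$ the full prolongation of $(\fg_-,\fg_0)$. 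In type $C$ the space $H^1$ has a positive-degree piece, which is why the symplectic case is excluded.

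To finish, I would match the degree conventions, translating homogeneity degree $p$ of a cocycle into the $k$ above. I would then check that degree $\le 0$ corresponds exactly to $k+1\le 2$, while also verifying that the contributions coming from $\fg_{-2}$ generators (the $k-1$ condition) are correctly accounted for. I expect the bookkeeping between the module-theoretic annihilator generators and Yamaguchi's $H^1$ degrees to be the delicate point. If the direct identification is awkward, a fallback is to verify it by computing $H_1(\fn_z,\fg)$ directly via Kostant's $\rho$-shifted Weyl group description.
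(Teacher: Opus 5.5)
Your route is correct and genuinely different from the paper's.

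\textbf{The paper's route.} The paper never dualizes. It defines the iterated prolongation $\widetilde S_z^k$ and turns $\widetilde S_z^{i+2}$ into spaces $\fa_i$ of contact vector fields via $F\mapsto\overrightarrow F$ and Lemma \ref{l.bracket}. It then embeds $\fa_i$ into $\operatorname{prol}(\fg_-,\fg_0)_i\cong\fg_i$ by Lemma \ref{l.universal} and Theorem \ref{t.Yamaguchi}. It concludes by a dimension count, using $S_z\subset\widetilde S_z$ (Theorem \ref{t.cCartan}) and $\dim S_z^{i+2}=\dim\fg_i$ (Lemma \ref{l.adjoint}).

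\textbf{Your route.} You use that $\sX_v$ is the transpose of left multiplication by $v$ on $U(\fn_z)$. Hence $S_z=\mathrm{Ann}^\perp$ for a graded left ideal $\mathrm{Ann}$, and the right-hand side is $((\fn_z\mathrm{Ann})_{k+1})^\perp$. The defect is therefore exactly $\dim(\mathrm{Ann}/\fn_z\mathrm{Ann})_{k+1}=\dim H_1(\fn_z,S_z^*)_{k+1}$. Here $S_z^*\cong U(\fg_-)\xi_0=\fg^*$, the last equality holding because $\dim S_z=\dim\fg$ by nondegeneracy.

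\textbf{The bookkeeping you worried about works out.} The $1$-chains of degree $k+1$ are $\fn_{z,1}\otimes(S_z^k)^*\oplus\fn_{z,2}\otimes(S_z^{k-1})^*$, which is exactly where the $S_z^{k-1}$ condition enters. Under $S_z^j\cong\fg_{j-2}$ they are dual to the $1$-cochains $\fg_{-1}^*\otimes\fg_{k-2}\oplus\fg_{-2}^*\otimes\fg_{k-3}$, which have Yamaguchi degree $p=k-1$. So $k\ge2$ corresponds precisely to $p\ge1$.

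\textbf{No separate Kostant computation is needed.} In positive degree the $1$-cocycle condition is literally the derivation condition of Definition \ref{d.prolong}. So, by induction on $p$, the vanishing of $H^1(\fg_-,\fg)_p$ for all $p\ge1$ is equivalent to Theorem \ref{t.Yamaguchi}.

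\textbf{What each approach buys.} Your approach gives an exact formula: the failure of the equality in weighted degree $k+1$ has dimension $\dim H^1(\fg_-,\fg)_{k-1}$. This shows the theorem is actually equivalent to Yamaguchi's result for contact gradings. It also explains Remark \ref{r.symplectic-exception}, since for $C_\ell$ Kostant's theorem puts the extra class in degree $p=1$, i.e.\ weighted degree $3$. The paper's approach needs only an inequality. It avoids identifying $S_z^*$ as a cyclic $U(\fg_-)$-module and any Tor/Chevalley--Eilenberg bookkeeping, and it reuses the contact Hamiltonian calculus of Section \ref{s.prelim}.
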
 
    
The equality in Theorem \ref{t.cLM} fails if $\fg$ is a symplectic Lie algebra (see Remark \ref{r.symplectic-exception}), in which case the adjoint variety is the second Veronese embedding of projective space.
     
     It is easy to see that the strict prolongation property of the fundamental forms, namely, the equality in Theorem \ref{t.LM}, does not hold for the adjoint variety. It is only when we consider the contact fundamental forms, that we have the analogous result of Theorem \ref{t.cLM}.

\medskip
Although Theorem \ref{t.cCartan} is a  natural noncommutative analogue of Theorem \ref{t.Cartan},  its proof, given in Section \ref{s.cFF},  is quite different from Cartan's proof of Theorem \ref{t.Cartan}, which used moving frame computations.  Se-ashi has given an alternative proof  of Theorem \ref{t.Cartan}  in \cite[Proposition 1.5.1, Corollary 1.5.2]{Se} using Spencer's differential operator $D$ on jet bundles. We prove Theorem \ref{t.cCartan} by generalizing Se-ashi's argument. For this, we develop in Section \ref{s.Spencer}, an analogue of Spencer's operator in the context of contact structures. 

The proof of Theorem \ref{t.cFH}, given in Section \ref{s.Hsymmetric}, is easy. The essential point is that the condition for the contact symbol system $S$ is given by  the right-invariant vector fields on a Heisenberg group, which makes $S$ preserved under left-translations. 

The proof of Theorem \ref{t.cLM}, presented in Section \ref{s.adjoint}, is very different from that of Theorem \ref{t.LM} in \cite{LM03}. Landsberg--Manivel's proof uses Littelmann's results on the universal enveloping algebra of the unipotent radical of the Borel subalgebra of a simple Lie algebra. Our proof of Theorem \ref{t.cLM} uses Yamaguchi's result (see Theorem \ref{t.Yamaguchi}) from \cite{Ya}, identifying the simple Lie algebra with the universal prolongation of $(\fg_-,\fg_0)$. This result of Yamaguchi's does not hold when $\fg$ is a symplectic Lie algebra.

\section{Preliminaries on Heisenberg groups and Heisenberg algebras}\label{s.prelim}

    \begin{notation}\label{n.basis}
    Let $\fh = \fh_1 + \fh_2$ be a graded Heisenberg algebra: its center is $\fh_2$ of dimension $1$ and  the Lie bracket determines a symplectic form $ \wedge^2 \fh_1 \to \fh_2$ on $\fh_1$.
  Let $\sH$ be the Heisenberg group of the Lie algebra $\fh$. We denote by  $0 \in \sH$ the identity element.  
The exponential map $ \exp: \fh \to \sH$ is a biregular morphism of algebraic varieties and its inverse is denoted by $\log : \sH \to \fh$. Fix a nonzero element $T \in \fh_2$ and a basis $$\{X_1, \ldots, X_m, Y_1, \ldots, Y_m\}$$ of $ \fh_1, \dim \fh_1 = 2m,$ satisfying $[X_a, Y_b] = \delta_{ab} T$ for all $1 \leq a, b \leq m$. 
    \end{notation}

    \begin{lemma}\label{l.coordi} In Notation \ref{n.basis}, let $x_a, y_a, t$ be the regular functions on $\sH$ determined by $$\log h \ = \ \sum_{a=1}^m (x_a(h) X_a + y_a(h) Y_a) + t(h) T 
$$ for all $h \in \sH$, defining a global coordinate system $(x_1, \ldots, x_m, y_1, \ldots, y_m, t)$  on $\sH$. Then the following holds.  \begin{itemize} \item[(i)] The group multiplication is expressed as \begin{eqnarray*} \lefteqn{(x_1, \ldots, x_m, y_1, \ldots, y_m, t) \cdot (z_1, \ldots, z_m, w_1, \ldots, w_m, s) =} \\ & & (x_1 + z_1, \ldots, x_m + z_m, y_1 + w_1, \ldots, y_m + w_m, t + s + \frac{1}{2}\sum_{a=1}^m (x_a w_a - y_a z_a)). \end{eqnarray*}
\item[(ii)] The elements  $X_a, Y_a, T \in \fh$ regarded as left-invariant vector fields on $\sH$ are expressed as
    $$X_a = \frac{\p}{\p x_a} - \frac{1}{2} y_a \frac{\p}{\p t}, \ Y_a = \frac{\p}{\p y_a} + \frac{1}{2} x_a \frac{\p}{\p t}, \ T = \frac{\p}{\p t}.$$
\item[(iii)] The space of  right-invariant vector fields   on $\sH$ are spanned by 
    $$\sX_a := \frac{\p}{\p x_a} + \frac{1}{2} y_a \frac{\p}{\p t}, \ \sY_a := \frac{\p}{\p y_a} - \frac{1}{2} x_a \frac{\p}{\p t}, \ T = \frac{\p}{\p t}.$$
\item[(iv)] The natural left-invariant contact structure $H \subset T\sH$ on $\sH$ determined by $\fh_1$ is given by the left-invariant 1-form $$ \theta = \sum_{a=1}^m(x_a {\rm d} y_a - y_a {\rm d} x_a) \ - \ 2 {\rm d} t.$$
    \end{itemize} \end{lemma}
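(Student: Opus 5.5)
The plan is to apply the Baker--Campbell--Hausdorff formula directly. Then (ii)--(iv) follow from (i) by differentiating the multiplication law.

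For (i), write $h = \exp(A)$ and $h' = \exp(B)$ with
\[
A = \sum_a (x_a X_a + y_a Y_a) + tT, \qquad B = \sum_a (z_a X_a + w_a Y_a) + sT .
\]
The Heisenberg algebra is two-step nilpotent: all brackets land in the center $\fh_2$. Hence the BCH series truncates to
\[
\log(\exp A \exp B) = A + B + \tfrac12 [A,B].
\]
Only the $\fh_1$-components contribute to the bracket. Using $[X_a, Y_b] = \delta_{ab} T$ and $[X_a,X_b]=[Y_a,Y_b]=0$, we get
\[
[A,B] = \sum_a (x_a w_a - y_a z_a)\, T .
\]
Reading off coordinates gives the stated formula. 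This also confirms that $(x,y,t)$ is a global coordinate system, since $\exp$ is biregular.

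For (ii), the left-invariant vector field with value $V$ at $0$ is $f \mapsto \frac{d}{d\epsilon}\big|_{\epsilon=0} f(h \cdot \exp(\epsilon V))$. Taking $V = X_a$ means setting $z = \epsilon e_a$, $w = 0$, $s = 0$ in (i). The curve is then $(x + \epsilon e_a,\, y,\, t - \tfrac12 \epsilon y_a)$, so the field is $\p_{x_a} - \tfrac12 y_a \p_t$. Similarly $Y_a$ gives $(x,\, y + \epsilon e_a,\, t + \tfrac12 \epsilon x_a)$, hence $\p_{y_a} + \tfrac12 x_a \p_t$. The center $T$ gives $\p_t$.

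For (iii), right-invariant fields come from left multiplication, $f \mapsto \frac{d}{d\epsilon}\big|_{\epsilon=0} f(\exp(\epsilon V)\cdot h)$. Putting the $\epsilon$-element in the first slot of (i) flips the signs of the correction terms, giving the stated $\sX_a$, $\sY_a$ and $T$. These span the right-invariant fields because their values at $0$ form a basis of $\fh$.

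For (iv), the contact distribution $H$ is spanned at each point by the left-invariant fields $X_a, Y_a$. It therefore suffices to check that $\theta$ kills every $X_a, Y_a$ and is nonzero on $T$. This is a direct evaluation, for example
\[
\theta(X_a) = -y_a - 2\cdot(-\tfrac12 y_a) = 0 .
\]
One should also check that $\theta$ is left-invariant. The quickest route is to pull $\theta$ back under left translation by $(x',y',t')$ using (i): the $-2\,dt$ term produces $-\sum_a (x'_a\, dy_a - y'_a\, dx_a)$, which exactly cancels the change in the first sum.

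No step is a real obstacle. The only care needed is the sign bookkeeping that distinguishes left-invariant from right-invariant fields, which I would double-check on the case $m=1$.
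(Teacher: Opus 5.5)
Your proposal is correct and follows essentially the same route as the paper: truncated Baker--Campbell--Hausdorff for (i), differentiating the group law along one-parameter subgroups for (ii) and (iii), and evaluating $\theta$ on $X_a, Y_a, T$ for (iv). Your explicit pullback check that $\theta$ is left-invariant is a small addition the paper leaves implicit; it also follows at once from $\theta$ taking the constant values $0,0,-2$ on the left-invariant frame.
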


\begin{proof} (i) is by the Campbell-Hausdorff formula. The left-invariant and right-invariant vector fields in (ii) and (iii) are obtained by differentiating the maps $g\mapsto h g$ and $g\mapsto g h$ at the identity, using the group law in (i).  Substituting these vector fields into the displayed one-form gives the identity in (iv).\end{proof}

%

Recall the following from  \cite[Theorem 7.1, Chapter I]{Ko} (also \cite[Proposition 1.1]{Be}). 

\begin{theorem}\label{t.Ko}
Let $M$ be a complex manifold with a contact structure $ D \subset TM$. 
Denote by $\fc \subset H^0(M, TM)$ the Lie algebra of contactomorphic vector fields on $M$, namely, holomorphic vector fields on $M$ whose local flows preserve $D \subset TM$. Let $L := TM/D$ be the line bundle with the quotient map $TM \to L$. Then the induced homomorphism $H^0(M, TM) \to H^0(M, L)$ sends $\fc$ isomorphically to $H^0(M, L)$.\end{theorem}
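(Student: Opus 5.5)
The plan is to work locally, in contact Darboux coordinates, and reduce the statement to the explicit formula for contactomorphic vector fields in terms of a generating function. Since both sides of Theorem \ref{t.Ko} are sheaves and the restriction map $\fc \to H^0(M,L)$ is defined globally, it suffices to prove the sheaf version: on every open $U \subset M$, the map from contact vector fields on $U$ to sections of $L|_U$ is bijective. Injectivity and surjectivity then glue: a global section of $L$ gives local contact fields, these agree on overlaps by local injectivity, and so they patch to a global element of $\fc$.

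Locally, choose a contact form $\theta$ with $D = \ker\theta$, so that $\theta \wedge ({\rm d}\theta)^m$ is nowhere zero. The map $v \mapsto \theta(v)$ then trivializes $L = TM/D$, and the map in question becomes $v \mapsto f := \theta(v)$. A vector field $v$ is contactomorphic exactly when $\mathcal{L}_v \theta = \lambda \theta$ for some function $\lambda$. By Cartan's formula,
$$\mathcal{L}_v\theta = {\rm d}f + \iota_v {\rm d}\theta .$$
Let $R$ be the Reeb field, defined by $\theta(R)=1$ and $\iota_R{\rm d}\theta = 0$. Evaluating the condition on $R$ gives $\lambda = R(f)$. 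The remaining condition is
$$\iota_v {\rm d}\theta \equiv -{\rm d}f + R(f)\theta \pmod{\theta} \ \text{ restricted to } D .$$

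Write $v = fR + w$ with $w \in D$. Because ${\rm d}\theta|_D$ is nondegenerate (this is the contact condition), the equation $\iota_w{\rm d}\theta|_D = -{\rm d}f|_D$ has a unique solution $w$, and $w$ is holomorphic whenever $f$ is. Conversely, the resulting $v$ satisfies $\mathcal{L}_v\theta = R(f)\theta$: both sides agree on $D$ by construction, and on $R$ both equal $R(f)$, using $\iota_R{\rm d}\theta=0$ and ${\rm d}\theta(w,R) = 0$. So for each local section $f$ there is exactly one contact vector field with $\theta(v)=f$, which proves bijectivity on $U$. The map is also $\C$-linear. It is a Lie algebra map once $L$ is given the Lagrange bracket, but only the linear isomorphism is needed here. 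If $\theta$ is replaced by $g\theta$, the class of $v$ in $L$ does not change, so the construction is independent of the choice of contact form and the local pieces glue.

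The main point requiring care is the pointwise linear algebra step: showing that the $D$-component $w$ is uniquely determined by $f$, via nondegeneracy of ${\rm d}\theta$ on $D$. One must also check that the unique $D$-solution, combined with the Reeb term, really satisfies the full contact condition and not just its restriction to $D$. Everything else is a routine verification.
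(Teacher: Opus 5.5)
Your proof is correct. The paper itself gives no proof of Theorem \ref{t.Ko}; it quotes the result from Kobayashi and Beauville. The only related argument in the text is Proposition \ref{p.Heis}, which works in the Darboux (Heisenberg) model.

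\begin{itemize}
\item There, existence comes from writing $\overrightarrow{F}$ explicitly in the coordinates $x_a, y_a, t$. One then checks $\theta(\overrightarrow{F})=F$ and ${\rm Lie}_{\overrightarrow{F}}\theta=-\frac12\frac{\partial F}{\partial t}\theta$ via Cartan's formula.
\item Uniqueness there is exactly your observation: a contact field with $\theta(v)=0$ lies in $H$ and is annihilated by the nondegenerate form ${\rm d}\theta|_H$.
\item In that model the Reeb field is $R=-\frac12\partial_t$, so your $\lambda=R(f)$ reproduces the factor $-\frac12\partial F/\partial t$.
\end{itemize}

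Your decomposition $v=fR+w$ produces the contact field intrinsically, for an arbitrary local contact form. It needs neither Darboux's theorem nor a coordinate formula, and together with the sheaf gluing it gives the global statement directly. The paper's explicit formula, on the other hand, is what it needs later: for Lemma \ref{l.bracket}, and for the weighted-degree bookkeeping in Lemma \ref{l.Lie} and Theorem \ref{t.nilpotent}.

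One small remark on the sentence about replacing $\theta$ by $g\theta$. It does not by itself justify independence of the choice of contact form, since the class of $v$ in $L$ is trivially independent of $\theta$. What makes the local inverses agree and glue is the uniqueness you already proved, as your first paragraph correctly says.
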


When $M$ is the Heisenberg group $\sH$, we can state it more explicitly as follows (see, for example, \cite[Theorem 2.2]{Hw}).

\begin{proposition}\label{p.Heis}
Let us use the coordinates and the 1-form $\theta$ on the Heisenberg group $\sH$ in Lemma \ref{l.coordi}.   
For each holomorphic function $F$ on $\sH$,  
  define a holomorphic vector field $\overrightarrow{F}$ on $\sH$ by \begin{eqnarray*}
\overrightarrow{F} & := &  - \frac{1}{2} \sum_{a=1}^m (\frac{\p F}{\p y_a} + \frac{x_a}{2} \frac{\p F}{\p t}) \frac{\p}{\p x_a} + \frac{1}{2} \sum_{a=1}^m (\frac{\p F}{\p x_a} - \frac{y_a}{2} \frac{\p F}{\p t}) \frac{\p}{\p y_a} \\ & & +  \left(\frac{1}{4} \sum_{a=1}^m  (x_a \frac{\p F}{\p x_a} + y_a \frac{\p F}{\p y_a}) - \frac{F}{2} \right) \frac{\p}{\p t}. \end{eqnarray*} 
 Then $\overrightarrow{F}$ is the unique vector field satisfying  $\theta (\overrightarrow{F}) = F$  and  $$  {\rm Lie}_{\overrightarrow{F}} \theta = -\frac{1}{2} \frac{\p F}{\p t} \theta.$$
\end{proposition}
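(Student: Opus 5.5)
The plan is to verify the two identities by direct computation in the coordinates of Lemma \ref{l.coordi}, using Cartan's formula for the Lie derivative. Uniqueness will then follow from the nondegeneracy of ${\rm d}\theta$ on $\ker\theta$. Throughout, write $F_{x_a}, F_{y_a}, F_t$ for partial derivatives and $V := \overrightarrow{F}$.

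\emph{Step 1: $\theta(V) = F$.} Substitute the components of $V$ into $\theta = \sum_a (x_a\,{\rm d}y_a - y_a\,{\rm d}x_a) - 2\,{\rm d}t$. The $x_a\,{\rm d}y_a$ terms give $\frac12\sum_a x_aF_{x_a} - \frac14\sum_a x_ay_aF_t$. The $-y_a\,{\rm d}x_a$ terms give $\frac12\sum_a y_aF_{y_a} + \frac14\sum_a x_ay_aF_t$. The $-2\,{\rm d}t$ term gives $-\frac12\sum_a(x_aF_{x_a}+y_aF_{y_a}) + F$. Everything cancels except $F$.

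\emph{Step 2: the Lie derivative.} By Cartan's formula and Step 1,
$${\rm Lie}_V\theta = {\rm d}(\theta(V)) + \iota_V{\rm d}\theta = {\rm d}F + \iota_V{\rm d}\theta .$$
Since ${\rm d}\theta = 2\sum_a {\rm d}x_a\wedge {\rm d}y_a$, we have $\iota_V{\rm d}\theta = 2\sum_a\bigl(V^{x_a}\,{\rm d}y_a - V^{y_a}\,{\rm d}x_a\bigr)$, where $V^{x_a}, V^{y_a}$ denote the ${\p}/{\p x_a}$- and ${\p}/{\p y_a}$-components of $V$. Inserting these components gives
$$\iota_V{\rm d}\theta = -\sum_a\Bigl(F_{y_a}+\frac{x_a}{2}F_t\Bigr){\rm d}y_a - \sum_a\Bigl(F_{x_a}-\frac{y_a}{2}F_t\Bigr){\rm d}x_a .$$
Adding ${\rm d}F = \sum_a(F_{x_a}{\rm d}x_a + F_{y_a}{\rm d}y_a) + F_t\,{\rm d}t$, the $F_{x_a}$ and $F_{y_a}$ terms cancel. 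What remains is
$$F_t\Bigl(\frac12\sum_a(y_a\,{\rm d}x_a - x_a\,{\rm d}y_a) + {\rm d}t\Bigr) = -\frac12 F_t\,\theta .$$

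\emph{Step 3: uniqueness.} This is the only conceptual point. Suppose $V'$ also satisfies both conditions, and set $W := V - V'$. Since the right-hand side $-\frac12 F_t\,\theta$ depends only on $F$, we get $\theta(W) = 0$ and ${\rm Lie}_W\theta = 0$. Cartan's formula then gives $\iota_W{\rm d}\theta = {\rm Lie}_W\theta - {\rm d}(\theta(W)) = 0$. Thus $W$ is a section of $\ker\theta = H$ lying in the kernel of ${\rm d}\theta|_H$. Because $\theta$ is a contact form, ${\rm d}\theta$ restricts to a nondegenerate form on $H$; concretely, ${\rm d}\theta = 2\sum_a {\rm d}x_a\wedge{\rm d}y_a$ restricted to $H$, which is spanned by the left-invariant fields $X_a, Y_a$. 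Hence $W = 0$.

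I expect no real obstacle. The only care needed is sign bookkeeping in Step 2, and noting in Step 3 that the prescribed Lie derivative depends only on $F$, so the difference satisfies the homogeneous conditions. Alternatively, uniqueness is a special case of Theorem \ref{t.Ko}.
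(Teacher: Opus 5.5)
Your proof is correct and follows the same route as the paper. You verify $\theta(\overrightarrow{F})=F$ by substitution, obtain the second identity from Cartan's formula ${\rm Lie}_{\overrightarrow F}\theta={\rm d}F+\iota_{\overrightarrow F}{\rm d}\theta$, and derive uniqueness from the fact that the difference of two solutions lies in $H$ and is annihilated by the nondegenerate form ${\rm d}\theta|_H$; your sign bookkeeping in Steps 1 and 2 checks out.
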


\begin{proof}
Substitution of the displayed formula for $\overrightarrow F$ into $\theta$
gives $\theta(\overrightarrow F)=F$.  A direct application of Cartan's formula
${\rm Lie}_{\overrightarrow F}\theta=\iota_{\overrightarrow F}{\rm d}\theta
+{\rm d}F$ gives the second identity.  These two identities also prove
uniqueness: the difference of two such vector fields is a section of the
contact distribution annihilated by ${\rm d}\theta$ on that distribution,
and ${\rm d}\theta|_H$ is nondegenerate.
\end{proof}

\begin{lemma}\label{l.bracket}
In the notation of Lemma \ref{l.coordi} and Proposition \ref{p.Heis},
we have for each $1 \leq a \leq m$, \begin{equation}\label{eq.Xa} \sX_a = -2 \overrightarrow{y_a}, \ \sY_a = 2 \overrightarrow{x_a}, \ T = - \overrightarrow{2} \end{equation}  and for any holomorphic function $F$ on $\sH$,
\begin{equation}\label{eq.F} [\sX_a, \overrightarrow{F}] = \overrightarrow{\sX_a(F)}, \ [\sY_a, \overrightarrow{F}] = \overrightarrow{\sY_a(F)}, \ [T, \overrightarrow{F}] = \overrightarrow{T(F)}. \end{equation}  \end{lemma}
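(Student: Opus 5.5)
Both parts of Lemma \ref{l.bracket} rest on the uniqueness statement in Proposition \ref{p.Heis}: a vector field $V$ equals $\overrightarrow{F}$ as soon as $\theta(V) = F$ and ${\rm Lie}_V \theta = -\frac{1}{2}\frac{\p F}{\p t}\theta$. The plan is to verify these two identities in each case rather than expand the defining formula blindly. For (\ref{eq.Xa}), one could simply substitute $F = y_a$, $F = x_a$ and $F = 2$ into the explicit formula for $\overrightarrow{F}$ and compare with Lemma \ref{l.coordi}(iii). For instance, with $F = x_a$ only the coefficient of $\p/\p y_a$ and the $\p/\p t$ term survive, giving $\frac12 \p/\p y_a - \frac14 x_a \p/\p t = \frac12 \sY_a$. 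The other two cases are equally immediate: $F=y_a$ gives $-\frac12\sX_a$, and $F=2$ gives $-\p/\p t = -T$. As a cross-check, $\theta(\sX_a) = -y_a - y_a\cdot 1 \cdot$ (from $-2{\rm d}t$ applied to $\frac12 y_a \p_t$) $= -2y_a$, and similarly for the others.

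For (\ref{eq.F}), let $V$ be any right-invariant vector field, that is, one of $\sX_a, \sY_a, T$. The key observation is that right-invariant vector fields generate left translations, and $\theta$ is left-invariant. Hence ${\rm Lie}_V \theta = 0$, so by (\ref{eq.Xa}) and Proposition \ref{p.Heis} each such $V$ is a contact vector field with $\p G/\p t = 0$ for its generating function $G$; indeed $G \in \{-2y_a, 2x_a, -2\}$, none of which depends on $t$. Since the bracket of two contactomorphic fields is contactomorphic (Theorem \ref{t.Ko}), $[V, \overrightarrow{F}]$ is contactomorphic. It therefore suffices to compute its generating function $\theta([V,\overrightarrow{F}])$ and check that it equals $V(F)$. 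The relevant identity is
$$\theta([V,W]) = V(\theta(W)) - W(\theta(V)) - {\rm d}\theta(V,W),$$
together with ${\rm Lie}_V\theta = \iota_V{\rm d}\theta + {\rm d}(\theta(V)) = 0$. The latter gives $\iota_V {\rm d}\theta = -{\rm d}(\theta(V))$, so ${\rm d}\theta(V,W) = -W(\theta(V))$. Combining these, $\theta([V,W]) = V(\theta(W)) = V(F)$ for $W = \overrightarrow{F}$. Alternatively, and more directly, $\theta([V,W]) = V(\theta(W)) - ({\rm Lie}_V\theta)(W) = V(F)$.

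It remains to confirm the second defining condition, ${\rm Lie}_{[V,W]}\theta = -\frac12 \p_t(V(F))\,\theta$. We have ${\rm Lie}_{[V,W]} = [{\rm Lie}_V, {\rm Lie}_W]$, so
$${\rm Lie}_{[V,W]}\theta = {\rm Lie}_V\!\left(-\tfrac12 \p_t F\,\theta\right) - 0 = -\tfrac12 V(\p_t F)\,\theta.$$
Since $\p_t = T$ commutes with every right-invariant field (the group is Heisenberg and $T$ is central), $V(\p_t F) = \p_t V(F)$. Uniqueness in Proposition \ref{p.Heis} then yields $[V,\overrightarrow{F}] = \overrightarrow{V(F)}$. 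The only point needing care is the sign conventions in the Lie-derivative identity; no serious obstacle is expected.
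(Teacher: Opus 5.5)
Your proposal is correct and follows essentially the paper's argument. You obtain (\ref{eq.Xa}) by direct substitution into the formula for $\overrightarrow{F}$, and (\ref{eq.F}) by computing $\theta([V,\overrightarrow{F}]) = V(F)$ from the Cartan formula for ${\rm d}\theta$ together with ${\rm Lie}_V\theta = 0$. The only difference is that you also verify ${\rm Lie}_{[V,\overrightarrow{F}]}\theta = -\frac12 \p_t(V(F))\,\theta$ and then apply the uniqueness in Proposition \ref{p.Heis} directly. The paper instead uses that the bracket is contactomorphic and that a contactomorphic field is determined by its value under $\theta$; either route closes the argument.
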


\begin{proof}
The equalities (\ref{eq.Xa}) are immediate from Proposition \ref{p.Heis}. The equalities (\ref{eq.F}) are also easy.
Let us check just the first equality. Note that the bracket $[\sX_a, \overrightarrow{F}]$ is contactomorphic, because both $\sX_a$ and $ \overrightarrow{F}$ are contactomorphic. Thus  it is enough to check $\theta([\sX_a, \overrightarrow{F}]) = \sX_a(F).$
This follows from  $$ {\rm d} \theta (\sX_a, \overrightarrow{F})= \sX_a (\theta(\overrightarrow{F})) - \overrightarrow{F}(\theta(\sX_a)) - \theta ([\sX_a, \overrightarrow{F}]) $$ and the vanishing of $$
{\rm d} \theta(\sX_a, \cdot) + {\rm d} (\theta(\sX_a))  = {\rm Lie}_{\sX_a} \theta, $$ from Proposition \ref{p.Heis}. \end{proof}

\begin{notation}\label{n.sum}  Let $U (\fh)$ be the universal enveloping algebra of the graded Heisenberg algebra $\fh = \fh_1 + \fh_2$.
Using Notation \ref{n.basis}, for nonnegative integers
$i_1,\ldots,i_m,j_1,\ldots,j_m$, denote by
$$
\{\prod_{a=1}^m X_a^{i_a}Y_a^{j_a}\}\in U(\fh)
$$
the sum of all monomials in $U(\fh)$ where $X_a$ appears $i_a$ times and
$Y_a$ appears $j_a$ times for all $1\leq a\leq m$.  For example,
$$
\{X_1 X_2 Y_3\} = X_1 X_2 Y_3 + X_2 X_1 Y_3 + X_1 Y_3 X_2
+ X_2 Y_3 X_1 + Y_3 X_1 X_2 + Y_3 X_2 X_1,
$$
and
$$
\{ X_1 Y_2^3\} = X_1 Y_2^3 + Y_2 X_1 Y_2^2
+ Y_2^2 X_1 Y_2 + Y_2^3 X_1.
$$
Any braced expression with a negative exponent is interpreted as zero.
\end{notation}

\begin{lemma}\label{l.sym-left}
For  $v_1,\ldots,v_N\in\fh_1$, put
$$
{\rm Sym}(v_1\cdots v_N):=\frac1{N!}\sum_{\sigma\in\mathfrak S_N}
v_{\sigma(1)}\cdots v_{\sigma(N)}.
$$
Let us use the convention ${\rm Sym}(v_1 \cdots v_N)=1$ if $N=0$ and ${\rm Sym}(v_1 \cdots v_N)=0$ if $N<0$.  Then for any $x \in \fh_1$, 
\begin{equation}\label{eq.sym-left}
x\,{\rm Sym}(v_1\cdots v_N)
={\rm Sym}(xv_1\cdots v_N)
+\frac12\sum_{r=1}^N [x,v_r]\,
{\rm Sym}(v_1\cdots\widehat v_r\cdots v_N),
\end{equation}
where $v_1\cdots\widehat v_r\cdots v_N$ means the collection of vectors with $v_r $ removed. 
\end{lemma}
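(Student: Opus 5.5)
Write $S_N := N!\,{\rm Sym}(v_1\cdots v_N) = \sum_{\sigma} v_{\sigma(1)}\cdots v_{\sigma(N)}$. The plan is to compare $x\,S_N$ with $S_{N+1}(x,v_1,\ldots,v_N)$, the sum of all words in which $x$ occupies every possible slot. The key fact we use is that $[x,v_r]\in\fh_2$ is central in $U(\fh)$, so moving $x$ one step to the right costs exactly a central term: for any word $w = u_1\cdots u_N$ with $u_i\in\fh_1$,
$$u_1\cdots u_{j}\, x\, u_{j+1}\cdots u_N = u_1\cdots u_{j+1}\,x\,u_{j+2}\cdots u_N + [u_{j+1},x]\,u_1\cdots \widehat{u_{j+1}}\cdots u_N .$$
Iterating from position $0$ to position $j$, we get
$$x\,w = (\text{$x$ inserted at slot } j) + \sum_{i\le j}[x,u_i]\,(w \text{ with } u_i \text{ removed}).$$

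We average this over the insertion slot $j=0,\ldots,N$. This gives
$$(N+1)\,x\,w = \sum_{j=0}^{N}(\text{$x$ at slot } j) + \sum_{i=1}^N (N+1-i)\,[x,u_i]\,(w\setminus u_i).$$
Summing over all $w=v_{\sigma(1)}\cdots v_{\sigma(N)}$, the first term becomes $S_{N+1}(x,v_1,\ldots,v_N)$, since every word of the $N+1$ letters arises exactly once as ($x$ at a slot of a permutation of the $v$'s).

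For the second term, fix $r$ and collect the contributions in which $u_i=v_r$. Since $v_r$ sits at position $i$ in exactly $(N-1)!$ permutations for each fixed ordering of the remaining letters, the coefficient of $[x,v_r]\,\tau$, for $\tau$ a word in the remaining $N-1$ letters, is $\sum_{i=1}^N (N+1-i) = N(N+1)/2$. Thus the second term equals $\frac{N(N+1)}{2}\sum_r [x,v_r]\,S_{N-1}(\widehat{v_r})$.

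Dividing by $(N+1)\cdot N!$ and converting back to ${\rm Sym}$, we obtain
$$x\,{\rm Sym}(v_1\cdots v_N) = {\rm Sym}(x v_1\cdots v_N) + \tfrac{1}{2}\sum_r [x,v_r]\,\frac{N\,(N-1)!}{N!}\,{\rm Sym}(\ldots\widehat{v_r}\ldots),$$
and the last coefficient is $1$, which is exactly \eqref{eq.sym-left}.

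The edge cases follow from the conventions. For $N=0$ the identity reads $x = x$. For $N=1$ it reads $x v = \frac{1}{2}(xv+vx)+\frac{1}{2}[x,v]$, which is correct and confirms the sign.

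The only delicate step is the counting in the averaging argument, namely getting the weight $N+1-i$ and its sum $N(N+1)/2$ right. The $N=1$ and $N=2$ cases give a quick sanity check of this bookkeeping.
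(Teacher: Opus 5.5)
Your proof is correct, but it takes a different route from the paper's.

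\textbf{The paper's route.} The paper argues by induction on $N$. It uses the first-letter decomposition ${\rm Sym}(v_1\cdots v_N)=\frac1N\sum_r v_r\,{\rm Sym}(V_{\widehat r})$ and commutes $x$ past $v_r$. It then applies the induction hypothesis to each ${\rm Sym}(V_{\widehat r})$. Finally it eliminates the auxiliary sum $D=\sum_r v_r\,{\rm Sym}(xV_{\widehat r})$ using the first-letter decomposition of ${\rm Sym}(xv_1\cdots v_N)$.

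\textbf{Your route.} Your argument is a direct count with no induction. You move $x$ from the front to each of the $N+1$ slots of each word and average over slots. Every arrangement of the $N+1$ letters then appears exactly once, and $[x,v_r]\tau$ appears with total weight $\sum_{i=1}^N(N+1-i)=\binom{N+1}{2}$. This weight is the number of arrangements with $x$ to the right of $v_r$ and the other letters in the order $\tau$.

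\textbf{Comparison.} Both proofs use only the centrality of $[x,v_r]$ in $U(\fh)$. The paper avoids positional bookkeeping, at the price of a somewhat opaque elimination step. Your version makes the factor $\frac12$ transparent: it is the proportion of arrangements of $x,v_1,\ldots,v_N$ in which $x$ stands to the right of $v_r$.

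\textbf{Two small slips.}
\begin{itemize}
\item In your first displayed commutation identity, the central term should be $[x,u_{j+1}]$, not $[u_{j+1},x]$, since $xu_{j+1}=u_{j+1}x+[x,u_{j+1}]$. Your iterated formula and your $N=1$ check already use the correct sign.
\item In the counting step, fix $i$ and an ordering $\tau$ of the remaining letters. There is exactly one permutation with $v_r$ in position $i$ and the other letters in order $\tau$, so $(N-1)!$ in total as $\tau$ varies. Read literally, ``$(N-1)!$ permutations for each fixed ordering'' would make the coefficient too large by a factor $(N-1)!$. The value $N(N+1)/2$ that you actually use is correct.
\end{itemize}
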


\begin{proof}
We prove the identity by induction on $N$.  The case $N=0$ is immediate, and
the case $N=1$ is the equality
$$
xv_1=\frac12(xv_1+v_1x)+\frac12[x,v_1].
$$
Assume $N\geq2$ and that the assertion is known for $N-1$ elements of $\fh_1$.  For $1 \leq r \leq N$, write
$$ V_{\widehat r} :=  v_1\cdots\widehat v_r\cdots v_N $$ and  for $1 \leq r \neq s \leq N$, write 
$$V_{\widehat r \widehat s} = V_{\widehat s \widehat r}  :=  v_1\cdots\widehat v_r\cdots \widehat v_s \cdots  v_N, $$ the collection of vectors with $v_r$ and $v_s$ removed. 
The first-letter decomposition gives
$$
{\rm Sym}(v_1\cdots v_N)
=\frac1N\sum_{r=1}^N v_r\,{\rm Sym}(V_{\widehat r}).
$$
Since the commutators $[x,v_r]$ are central elements in the Heisenberg algebra, applying
the induction hypothesis to each $V_{\widehat r}$ gives
\begin{eqnarray*}
x\,{\rm Sym}(v_1\cdots v_N)
&=&\frac1N\sum_r (v_rx+[x,v_r]){\rm Sym}(V_{\widehat r})\\
&=&\frac1N\sum_r v_r\,{\rm Sym}(xV_{\widehat r})
+\frac1{2N}\sum_r\sum_{s\ne r}[x,v_s]\,v_r\,
{\rm Sym}(V_{\widehat r\widehat s})\\
&&+\frac1N\sum_r[x,v_r]\,{\rm Sym}(V_{\widehat r}).
\end{eqnarray*}
For a fixed $s$, another first-letter decomposition gives
$$
\sum_{r\ne s}v_r\,{\rm Sym}(V_{\widehat r\widehat s})
=(N-1){\rm Sym}(V_{\widehat s}).
$$
Thus $$\sum_r\sum_{s\ne r}[x,v_s]\,v_r\,
{\rm Sym}(V_{\widehat r\widehat s}) = \sum_s \sum_{r \neq s} [x, v_s] v_r {\rm Sym}(V_{\widehat r \widehat s}) = (N-1) \sum_s [x, v_s]  {\rm Sym}(V_{\widehat s}).$$ 
Setting 
$$
C:=\sum_r[x,v_r]{\rm Sym}(V_{\widehat r})
\quad\mbox{and}\quad
D:=\sum_r v_r\,{\rm Sym}(xV_{\widehat r}),
$$
the above computation says
$$
x\,{\rm Sym}(v_1\cdots v_N)=\frac1ND+\frac{N+1}{2N}C.
$$
On the other hand, the first-letter decomposition of
${\rm Sym}(xv_1\cdots v_N)$ is
$$
{\rm Sym}(xv_1\cdots v_N)
=\frac1{N+1}x\,{\rm Sym}(v_1\cdots v_N)+\frac1{N+1}D.
$$
Eliminating $D$ from the last two displayed equalities gives
\eqref{eq.sym-left}.  This completes the induction.
\end{proof}

\begin{lemma}\label{l.Moyal}
In Notation \ref{n.sum}, the following identities hold in $U(\fh)$ for each $1 \leq b \leq m$. 
\begin{eqnarray*} X_b\{\prod_{a=1}^m X_a^{i_a} Y_a^{j_a}\} &=& \frac{i_b +1}{\sum_{s=1}^m(i_s + j_s) +1}\{X_b^{i_b+1} Y_b^{j_b} \prod_{a \neq b} X_a^{i_a} Y_a^{j_a}\} \\ & & + \frac{\sum_{s=1}^m(i_s + j_s)}{2} \{X_b^{i_b} Y_b^{j_b-1} \prod_{a \neq b} X_a^{i_a} Y_a^{j_a}\}T, \\
 Y_b\{\prod_{a=1}^m X_a^{i_a} Y_a^{j_a}\} &=& \frac{j_b +1}{\sum_{s=1}^m(i_s + j_s) +1}\{X_b^{i_b} Y_b^{j_b+1} \prod_{a \neq b} X_a^{i_a} Y_a^{j_a}\} \\ & &  - \frac{\sum_{s=1}^m(i_s + j_s)}{2} \{X_b^{i_b-1} Y_b^{j_b} \prod_{a \neq b} X_a^{i_a} Y_a^{j_a}\} T. \end{eqnarray*}
\end{lemma}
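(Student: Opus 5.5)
The plan is to reduce the identities to Lemma \ref{l.sym-left} by comparing the braced sums of Notation \ref{n.sum} with the symmetrizations ${\rm Sym}$. Fix exponents $i_1,\ldots,i_m,j_1,\ldots,j_m$, put $N=\sum_s(i_s+j_s)$ and $P=\prod_a i_a!\,j_a!$. Let $v_1,\ldots,v_N$ be the list in which each $X_a$ occurs $i_a$ times and each $Y_a$ occurs $j_a$ times.

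The first step is to record the identity
$$\{\prod_a X_a^{i_a}Y_a^{j_a}\}=\frac{N!}{P}\,{\rm Sym}(v_1\cdots v_N).$$
To see this, note that the $N!$ permutations in ${\rm Sym}$ produce each distinct word exactly $P$ times, and there are $N!/P$ distinct words. This is consistent with the conventions for $N=0$, and with negative exponents, where both sides are $0$.

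Next apply Lemma \ref{l.sym-left} with $x=X_b$. The term ${\rm Sym}(X_bv_1\cdots v_N)$ corresponds to the exponent vector with $i_b$ raised by one. By the first step it equals
$$\frac{(i_b+1)\,P}{(N+1)!}\,\{X_b^{i_b+1}Y_b^{j_b}\prod_{a\neq b}X_a^{i_a}Y_a^{j_a}\}.$$
Multiplying by $N!/P$ gives the coefficient $(i_b+1)/(N+1)$.

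In the commutator sum, $[X_b,v_r]$ vanishes unless $v_r=Y_b$, and in that case it equals $T$. There are $j_b$ such indices $r$. Each one leaves the same multiset, namely the one with $j_b$ lowered by one, and its ${\rm Sym}$ equals
$$\frac{P/j_b}{(N-1)!}\,\{X_b^{i_b}Y_b^{j_b-1}\prod_{a\neq b}X_a^{i_a}Y_a^{j_a}\}.$$
The total coefficient is therefore
$$\frac{N!}{P}\cdot\frac{j_b}{2}\cdot\frac{P}{j_b\,(N-1)!}=\frac N2 .$$
Since $T$ is central, it can be moved to the right. When $j_b=0$ the commutator sum is empty, which matches the convention that a braced expression with a negative exponent is zero.

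The $Y_b$ identity follows in the same way with $x=Y_b$. Now the only nonzero brackets are $[Y_b,X_b]=-T$, which occur $i_b$ times, and this produces the minus sign. There is no real obstacle here. The only point that needs care is the multinomial bookkeeping in the first step, in particular checking that the factor $j_b$ (respectively $i_b$) cancels exactly to leave the coefficient $N/2$.
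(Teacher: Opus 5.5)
Your proposal is correct and follows the paper's proof essentially verbatim. Both arguments write the braced sums as multiples of ${\rm Sym}$ and apply Lemma \ref{l.sym-left} with $x=X_b$ (resp.\ $x=Y_b$). In both, the $j_b$ (resp.\ $i_b$) nonzero commutators cancel against the multinomial factor, leaving $\pm N/2$.

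Your normalization $\{\prod_a X_a^{i_a}Y_a^{j_a}\}=\frac{N!}{P}{\rm Sym}(v_1\cdots v_N)$ is the right one. The paper's corresponding display has a stray factor $N!$ on its left-hand side, although its later computations use the correct normalization.
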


\begin{proof}
Set $N=\sum_a(i_a+j_a)$ and write $I!J!:=\prod_a (i_a!) \prod_a (j_a!)$.  Then $$
N! \ {\rm Sym}\left(\prod_a X_a^{i_a}Y_a^{j_a}\right) =\frac{I!J!}{N!} \{\prod_a X_a^{i_a}Y_a^{j_a}\}.
$$
In fact, the left hand side is the sum of all $N!$ permutations of $X_1^{i_1} \cdots X_m^{i_m} Y_1^{j_1} \cdots Y_m^{j_m}$, while each 
 distinct word  in $\{\prod_a X_a^{i_a}Y_a^{j_a}\}$  occurs exactly
$\prod_a (i_a!) \prod_a (j_a!)=I!J!$ times in this sum, because permutations among $i_a$
 copies of  $X_a$ and $j_a$ copies of $Y_a$ for $1 \leq a \leq m$ do not change the word. 
In particular, this gives
$$
{\rm Sym}\left(X_b\prod_a X_a^{i_a}Y_a^{j_a}\right) = \frac{I!(i_b+1)J!}{(N+1)!} \{X_b^{i_b+1}Y_b^{j_b}\prod_{a\ne b}X_a^{i_a}Y_a^{j_a}\}.
$$
Regard the left hand side of the above equation as  the first term in \eqref{eq.sym-left} with $x=X_b$ and $v_1 \cdots v_N = \prod_a X_a^{i_a} Y_a^{j_a}$.  Then in the corresponding second term in \eqref{eq.sym-left},  only the $j_b$ copies of $Y_b$ have nonzero bracket $[X_b,Y_b]=T$ with $X_b$, yielding
$$
\frac12\,j_b\,
{\rm Sym}\left(X_b^{i_b}Y_b^{j_b-1}
\prod_{a\ne b}X_a^{i_a}Y_a^{j_a}\right)T
=\frac{I!J!}{2(N-1)!}
\{X_b^{i_b}Y_b^{j_b-1}\prod_{a\ne b}X_a^{i_a}Y_a^{j_a}\}T.
$$
Summing-up and cleaning the common factors, we obtain the first identity of Lemma \ref{l.Moyal}.

The second identity can be proved by the same argument, regarding the left hand side of 
$$
{\rm Sym}\left(Y_b\prod_a X_a^{i_a}Y_a^{j_a}\right) = \frac{I!J!(j_b+1)}{(N+1)!} \{X_b^{i_b}Y_b^{j_b+1}\prod_{a\ne b}X_a^{i_a}Y_a^{j_a}\}
$$
 as  the first term in \eqref{eq.sym-left} with $x=Y_b$ and using
$[Y_b,X_b]=-T$.  
\end{proof}

\begin{remark} Lemma \ref{l.Moyal} is a special case of the Moyal product formula in the theory of quantization  (see, for example, \cite[page 235]{Yo}).  \end{remark}

\section{Spencer's differential operator for contact jet bundles}\label{s.Spencer}

 Cartan's original proof of Theorem \ref{t.Cartan} seems not suitable to adapt to prove Theorem \ref{t.cCartan}. Instead, we take the approach of Se-ashi (\cite[Corollary 1.5.2]{Se}) using Spencer's differential operator $D$ on jet bundles. 
In this section, we introduce the notion of jet bundles for contact manifolds and define an analogue of Spencer's differential operator in this setting.

\begin{definition}\label{d.contact}
Let $M$ be a complex manifold  with a contact structure $H \subset TM$. 
\begin{itemize} \item[(i)]
For each $z \in M$, define $\fn_{z, 1} := H_z$ and $\fn_{z, 2} := T_z M/H_z$. Then $\fn_z = \fn_{z,1} + \fn_{z, 2}$ has a graded Heisenberg algebra structure given by Lie brackets of vector fields. We call it the {\em symbol algebra} of the contact structure $H \subset TM$ at $z \in M$.  Its universal enveloping algebra $U(\fn_z)$ has a grading coming from the grading $\fn_z = \fn_{z,1} + \fn_{z, 2}$ (this grading is not compatible with the natural filtration of the universal enveloping algebra), which we denote by $$U(\fn_z) \  = \ \oplus_{k \geq 0} U_k(\fn_z).$$  As $z$ varies, it defines a vector bundle $U(\fn)$ on $M$ with the fiber $U(\fn)_z = U(\fn_z)$  and a vector subbundle $U_k(\fn)$ for each $k \geq 0$ with the fiber  $U_k(\fn)_z = U_k(\fn_z)$.
\item[(ii)] For each $z \in M$,  define the {\em weighted order} ${\rm ord}_z(\varphi)$ of a local holomorphic function $\varphi \in \sO_{M, z}$ to be  zero.
The {\em weighted order} ${\rm ord}_z (\xi)$ of a nonzero local vector field $\xi$ is defined to be 1 if it is a local section  of $H$ and 2 otherwise.  A local monomial  differential operator  on $M$ of the form $\xi_1 \cdots \xi_r$ has weighted order $${\rm ord}_z (\xi_1 \cdots \xi_r) \ := \ {\rm ord}_z (\xi_1) + \cdots + {\rm ord}_z(\xi_r).$$ A local differential operator on $M$ is {\em of weighted order $\leq k$} at $z$ if each of its monomial terms has weighted order at most $k$. 
\item[(iii)]
A local holomorphic function $\varphi \in \sO_{M,z}$ has {\em weighted vanishing order $\geq k+1$} at $z \in M$, if $P\varphi$ vanishes at $z$ for any local differential operator $P$ of weighted order $\leq k$. Denote by $\bm^k_z \subset \sO_z$ the ideal of local holomorphic functions with weighted vanishing order $\geq k$ at $z$ and set $ J^k_z := \sO_z / \bm^{k+1}_z.$ The vector bundle $J^k$ on $M$ whose fiber at $z$ is $J^k_z$ is called the {\em bundle of contact $k$-jets}. 
A local holomorphic function $\varphi \in \sO_{M,z}$ naturally determines a local section $j^k(\varphi) \in \sO(J^k)_z$, called the {\em contact $k$-jet} of $\varphi$ at $z$. 
There is a natural surjective homomorphism of vector bundles $ \pi^{k}_{k-1}: J^{k} \to J^{k-1}$ yielding an exact sequence of  vector bundles 
$$0 \to U_{k}(\fn)^* \longrightarrow J^{k} \stackrel{\pi^{k}_{k-1}}{\longrightarrow} J^{k-1} \to 0,$$ where $U_k(\fn)^*$ denotes the dual of the vector bundle $U_k(\fn)$.  \end{itemize} \end{definition}


\begin{notation}\label{n.coordi}
Using the coordinates $x_a, y_a, t$ on the Heisenberg group $\sH$ from Lemma \ref{l.coordi},  
for a multi-index $I = (i_1, \ldots, i_m, j_1, \ldots, j_m, p)$, define
\begin{eqnarray*} (xyt)^I &:= & (\prod_{a=1}^m x_a^{i_a} y_a^{j_a}) t^p, \\ |I| &:= & \sum_{s=1}^m(i_s + j_s) + 2p, \\ |I'| &:=& \sum_{s=1}^m(i_s + j_s) = |I| - 2p. \end{eqnarray*} For each $1 \leq a \leq m$, define
\begin{eqnarray*} 
I^{+a} & := &   (i_1, \ldots, i_{a}+1, \ldots,  i_m, j_1, \ldots, j_m, p), \\
I^{-a} & := &   (i_1, \ldots, i_{a}-1, \ldots,  i_m, j_1, \ldots, j_m, p+1), \\
I_{+a} & := &   (i_1, \ldots,  i_m, j_1, \ldots,  j_{a}+1, \ldots, j_m, p), \\
I_{-a} & := &   (i_1, \ldots,  i_m, j_1, \ldots,  j_{a}-1, \ldots, j_m, p+1), \end{eqnarray*} such that $|I^{+a}| = |I^{-a}| = |I_{+a}| = |I_{-a}| = |I| + 1$. Also define $$I^{\sharp} \ := \ (i_1, \ldots, i_m, j_1, \ldots, j_m, p+1) $$ such that $|I^{\sharp}| = |I| + 2.$  \end{notation}

We recall the weighted Taylor formula for functions on Heisenberg groups from \cite[Proposition 20.3.14, Corollary 20.3.15]{BLU} (see also  \cite[Section 4]{ACC}).

\begin{theorem}\label{t.Taylor}
In Notation \ref{n.coordi},  for any local holomorphic function $\varphi \in \sO_{\sH,0}$, the polynomial $$ \sum_{|I|=0}^k  \frac{\big( \{\prod_{a=1}^m X_a^{i_a} Y_a^{j_a}\}T^p \varphi \big)(0)}{|I'|! \ p !} (xyt)^I $$ with $I = (i_1, \ldots, i_m, j_1, \ldots, j_m, p)$ in the sum, belongs to $ \varphi + \bm_0^{k+1}$.  Here, we regard $X_a, Y_a, T$ as left-invariant vector fields on $\sH$ and take derivative of $\varphi$ with respect to these vector fields. Similarly, for any point $h \in \sH$ and $\varphi \in \sO_{\sH,h}$,  the polynomial $$ \sum_{|I|=0}^k  \frac{\big( \{\prod_{a=1}^m X_a^{i_a} Y_a^{j_a}\}T^p \varphi \big)(h)}{|I'|!  p !} h^*(xyt)^I $$ belongs to $\varphi + \bm_h^{k+1}$, where for any function $F$ on $\sH$, we denote by $h^*F$ the function whose value at $g \in \sH$ is $h^*F(g) = F(h^{-1} g)$. 
\end{theorem}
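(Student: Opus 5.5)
We prove the statement at the identity $0$ and then obtain the case of a general point $h$ by left translation. Left translation $L_h$ is an automorphism of $\sH$ that preserves the left-invariant vector fields $X_a, Y_a, T$ and the contact structure $H$, so it preserves weighted orders of differential operators. Hence $L_h^*\bm_h^{k+1}=\bm_0^{k+1}$, and pulling back the formula at $0$ for $\varphi\circ L_h$ gives the formula at $h$: the coefficients become $(\{\cdots\}T^p\varphi)(h)$ and the monomials become $h^*(xyt)^I$.

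At $0$, let $P_k$ be the displayed polynomial. It suffices to show that $D(\varphi-P_k)(0)=0$ for every left-invariant monomial operator $D$ of weighted order $\le k$. Each local vector field is a combination, with holomorphic coefficients, of $X_a,Y_a$ (which span $H$) and $T$. Expanding a general operator of weighted order $\le k$ and commuting the coefficient functions to the left only produces further left-invariant monomials of weighted order $\le k$. By Poincar\'e--Birkhoff--Witt for the graded algebra $U(\fh)$, the elements $\{\prod_a X_a^{i_a}Y_a^{j_a}\}T^p$ with $|I|\le k$ span $\oplus_{j\le k}U_j(\fh)$. Indeed, the symmetrized words in $\fh_1$ span modulo lower terms, and commutators produce $T$, which has the same weight $2$ as two letters. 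So it suffices to prove
$$\big(\{\textstyle\prod_a X_a^{i_a}Y_a^{j_a}\}T^p (xyt)^J\big)(0)=\delta_{IJ}\,|I'|!\,p!\quad\text{for } |I|,|J|\le k.$$
This identity shows that $\varphi-P_k$ is killed at $0$ by all these operators.

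To prove the identity, we use the exponential coordinates. Symmetrized elements act at $0$ as ordinary directional derivatives along the one-parameter subgroups, since $\exp(sv)$ has coordinates $s\cdot v$. Thus $(\mathrm{Sym}(v_1\cdots v_N)T^p f)(0)$ equals the symmetric $N$-th derivative of $f$ at $0$ in the directions $v_1,\dots,v_N$ combined with $p$ derivatives in $t$. This uses $\exp(sv+uT)=\exp(sv)\exp(uT)$, which holds because $T$ is central. So we get ordinary partial derivatives of the polynomial $(xyt)^J$ at $0$. This produces $\prod i_a!\,j_a!\,p!$ when $J=I$ and $0$ otherwise. Converting between $\mathrm{Sym}$ and the brace sum with the factor $N!/(I!J!)$ from the proof of Lemma \ref{l.Moyal} gives exactly $|I'|!\,p!$.

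The main obstacle is justifying rigorously that symmetrized left-invariant operators evaluated at the identity are ordinary polarized derivatives in exponential coordinates. This holds because $\frac{d^N}{ds^N}f(\exp(s v))|_{s=0}=(v^Nf)(0)$ for left-invariant $v$, together with polarization. The other technical point is the spanning and ordering argument showing that operators of weighted order $\le k$ at $0$ reduce to these brace elements.
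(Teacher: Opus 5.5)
Your proof is correct. It also fills a gap the paper leaves open, since the paper gives no argument for this statement: it quotes the Taylor formula on homogeneous Carnot groups from \cite[Proposition 20.3.14, Corollary 20.3.15]{BLU}.

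Your computation has two ingredients. First, you read $({\rm Sym}(v_1\cdots v_N)T^p f)(0)$ as an ordinary polarized derivative in exponential coordinates, using $\frac{d^N}{ds^N}f(\exp(sv))|_{s=0}=(v^Nf)(0)$. Second, from this you get the duality $\big(\{\prod_a X_a^{i_a}Y_a^{j_a}\}T^p(xyt)^J\big)(0)=\delta_{IJ}\,|I'|!\,p!$. This is essentially the mechanism behind the cited explicit formula. Expand $\varphi(\exp \xi)=\sum_n\frac{1}{n!}\big((\sum_a x_aX_a+y_aY_a+tT)^n\varphi\big)(0)$ and regroup by weight. Because $T$ is central, this produces exactly the coefficients $1/(|I'|!\,p!)$ in front of $\{\prod_a X_a^{i_a}Y_a^{j_a}\}T^p$.

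What your route adds is a direct check against the paper's own definition of $\bm_0^{k+1}$. That definition is phrased through arbitrary holomorphic differential operators of weighted order $\le k$. The cited result, by contrast, is formulated for smooth functions on real Carnot groups. Two steps of yours bridge this difference, and the citation leaves them implicit:
\begin{itemize}
\item the frame expansion in $X_a,Y_a,T$, where commuting coefficient functions to the left only lowers the weighted order;
\item the PBW fact that the brace elements with $|I|\le k$ span $\bigoplus_{j\le k}U_j(\fh)$.
\end{itemize}
Your left-translation step correctly reproduces the convention $h^*F(g)=F(h^{-1}g)$.

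One caution. The normalization you use, $|I'|!\,{\rm Sym}(w)=\big(\prod_a i_a!\,j_a!\big)\{w\}$, is the right one. The first display in the proof of Lemma \ref{l.Moyal} has a stray factor of $N!$, so do not copy it literally.
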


\begin{remark}\label{r.jet}
From the exact sequence in Definition \ref{d.contact} (iii) and Theorem \ref{t.Taylor}, we can identify $U_k(\fh)^*$ with the space of  polynomials of weighted degree $k$ in $(x_a, y_a, t)$. Thus the graded dual $U(\fh)^* = \oplus_{k=0}^{\infty} U_k(\fh)^*$ can be identified with the space of polynomial functions on the Heisenberg group $\sH$. \end{remark}

\begin{definition}\label{d.Spencer} 
For $0\le r\le k$, write $\pi^k_r:J^k\to J^r$ for the natural truncation.
By convention, we set $J^r=0$ and $\pi^k_r=0$ when $r<0$.
By Theorem \ref{t.Taylor}, the vector space $J^k_h$ for $h \in \sH$ is equal to the vector space of polynomials of the form   $$ \sum_{|I| = 0}^{ k} c_I \ h^*(xyt)^I $$ with coefficients $c_I \in \C$. Thus an element $\sigma \in \sO(J^k)_h$, namely,  a local section $\sigma$ of the bundle $J^k$ near $h$, can be written as $$ \sigma(g) =  \sum_{|I| = 0}^{k} \sigma_I(g) \ g^*(xyt)^I $$ for  $\sigma_I \in \sO_{\sH, h}$, which we write simply as $$ \sigma =  \sum_{|I| = 0}^{k} \sigma_I  \ g^*(xyt)^I.$$ For each $1 \leq a \leq m$, define the differential operator $D_{X_a}$ (resp. $D_{Y_a}$)  that sends $\sigma \in \sO(J^k)_h$ to an element of  $\sO(J^{k-1})_h$ as follows. 
\begin{eqnarray*} 
D_{X_a} \sigma & := & \sum_{|I|=0}^{k-1} (X_a \sigma_I - (i_a +1) \sigma_{I^{+a}} - \frac{p+1}{2} \sigma_{I_{-a}}) g^*(xyt)^I \\
D_{Y_a} \sigma & := & \sum_{|I|=0}^{k-1} (Y_a \sigma_I - (j_a +1) \sigma_{I_{+a}} + \frac{p+1}{2} \sigma_{I^{-a}}) g^*(xyt)^I.
\end{eqnarray*}
Also define the differential operator $D_{T}$  that sends  $\sigma \in \sO(J^k)_h$ to an element of  $\sO(J^{k-2})_h$ as follows. 
$$ D_T \sigma \ = \  \sum_{|I|=0}^{k-2} (T \sigma_I - (p+1) \sigma_{I^{\sharp}}) g^*(xyt)^I.$$
\end{definition}

\begin{remark} The operators $D_{X_a}, D_{Y_a}, D_T$ are adaptation of Spencer's differential operator $D$ in \cite[Section 1.3]{Sp} to contact manifolds. Spencer's differential operator $D$ is intrinsically defined, independent of coordinates or frames.  It may be possible to give such an intrinsic  formulation of our operators in Definition \ref{d.Spencer}, but we do not pursue it here because it is not needed in our discussion. \end{remark}

The following is an analogue  of \cite[Proposition 1.3.1]{Sp}, in the setting of a contact manifold.

\begin{theorem}\label{t.Spencer}
In the notation of Definition \ref{d.Spencer}, we have the following.
\begin{itemize} \item[(i)] For each $1 \leq b \leq m$, 
for any  $\varphi \in \sO_{\sH,h}$ and $\sigma \in \sO(J^k)_h$,
\begin{eqnarray*} D_{X_b} (\varphi \sigma) & = & (X_b \varphi) \pi^k_{k-1} \sigma + \varphi  \ D_{X_b} \sigma \\
D_{Y_b} (\varphi \sigma) & = & (Y_b \varphi) \pi^k_{k-1} \sigma + \varphi  \ D_{Y_b} \sigma \\
D_T (\varphi \sigma) & = & (T \varphi) \pi^k_{k-2} \sigma + \varphi  \ D_{T} \sigma. \end{eqnarray*}
\item[(ii)] 
An element $\sigma \in \sO(J^k)_h$ is of the form $j^k(\varphi)$ for some $\varphi \in \sO_h$ if and only if $$ D_{X_b} \sigma \ = \ D_{Y_b} \sigma \ = \ D_T \sigma \ =0$$ for all $1\leq b \leq m$. \end{itemize} \end{theorem}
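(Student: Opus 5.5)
Part (i) follows from the Leibniz rule applied to the coefficients. Write $\varphi\sigma=\sum_I(\varphi\sigma_I)\,g^*(xyt)^I$. In the formula for $D_{X_b}$, the derivative term $X_b(\varphi\sigma_I)$ splits as $(X_b\varphi)\sigma_I+\varphi X_b\sigma_I$. The shift terms $-(i_b+1)\varphi\sigma_{I^{+b}}-\frac{p+1}{2}\varphi\sigma_{I_{-b}}$ are $C^\infty$-linear in $\varphi$. Collecting terms gives $(X_b\varphi)\sum_{|I|\le k-1}\sigma_I\,g^*(xyt)^I=(X_b\varphi)\pi^k_{k-1}\sigma$ plus $\varphi D_{X_b}\sigma$. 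The cases of $D_{Y_b}$ and $D_T$ are identical, with the truncation $\pi^k_{k-2}$ for $D_T$ because its sum runs only over $|I|\le k-2$. This part is purely formal.

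For (ii), I first prove the ``only if'' direction by a direct computation on a contact jet. By Theorem \ref{t.Taylor},
\[
\sigma_I(g)=\frac{(\{\prod_a X_a^{i_a}Y_a^{j_a}\}T^p\varphi)(g)}{|I'|!\,p!}
\]
for $j^k(\varphi)$. Applying $X_b$ and using left-invariance, $X_b\sigma_I$ is $(|I'|!\,p!)^{-1}$ times $X_b\{\prod X_a^{i_a}Y_a^{j_a}\}T^p\varphi$ evaluated at $g$. Lemma \ref{l.Moyal} rewrites $X_b\{\prod X_a^{i_a}Y_a^{j_a}\}$ as
\[
\frac{i_b+1}{|I'|+1}\{X_b^{i_b+1}\cdots\}+\frac{|I'|}{2}\{X_b^{i_b}Y_b^{j_b-1}\cdots\}T .
\]
Converting back to the normalized coefficients, the first term gives
\[
\frac{i_b+1}{|I'|+1}\cdot\frac{(|I'|+1)!\,p!}{|I'|!\,p!}\,\sigma_{I^{+b}}=(i_b+1)\sigma_{I^{+b}} .
\]
The second term, whose multi-index is $I_{-b}$ with $|I_{-b}'|=|I'|-1$ and $T$-power $p+1$, gives
\[
\frac{|I'|}{2}\cdot\frac{(|I'|-1)!\,(p+1)!}{|I'|!\,p!}\,\sigma_{I_{-b}}=\frac{p+1}{2}\sigma_{I_{-b}} .
\]
So $D_{X_b}j^k(\varphi)=0$. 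The same computation with the second identity of Lemma \ref{l.Moyal} handles $Y_b$, and the sign of $\frac{p+1}{2}\sigma_{I^{-b}}$ matches. For $T$, centrality gives $T\{\cdots\}T^p=\{\cdots\}T^{p+1}$, which yields $T\sigma_I=(p+1)\sigma_{I^\sharp}$. The range $|I|\le k-1$ (respectively $k-2$) guarantees that every index involved has weight at most $k$, so the Taylor formula applies.

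For the ``if'' direction, I set $\varphi:=\sigma_0$, the coefficient with $I=0$, and prove by induction on $|I|$ that
\[
\sigma_I=\frac{\{\prod X_a^{i_a}Y_a^{j_a}\}T^p\varphi}{|I'|!\,p!}
\]
for all $|I|\le k$. Then $\sigma=j^k(\varphi)$ by Theorem \ref{t.Taylor}.
\begin{itemize}
\item The equations $D\sigma=0$ are recursions of the form $(i_b+1)\sigma_{I^{+b}}=X_b\sigma_I-\frac{p+1}{2}\sigma_{I_{-b}}$, similarly for $Y_b$, and $(p+1)\sigma_{I^\sharp}=T\sigma_I$.
\item Every $J$ with $1\le|J|\le k$ has the form $I^{+b}$, $I_{+b}$ or $I^\sharp$ for some $I$ of lower weight. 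If $J$ has a positive $x$- or $y$-exponent I use the $X_b$ or $Y_b$ recursion, and if $J$ is a pure power of $t$ I use the $T$ recursion.
\item The right-hand sides involve only indices of weight $<|J|$, whose coefficients are already known by induction to be the Taylor coefficients of $\varphi$.
\item By the ``only if'' computation, the Taylor coefficients of $\varphi$ satisfy the same recursions. So $\sigma_J$ equals the Taylor coefficient, whichever recursion is used; consistency is automatic because the Taylor coefficients satisfy all of them.
\end{itemize}

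The main obstacle is the coefficient bookkeeping in the ``only if'' step: the factorials $|I'|!\,p!$ must exactly absorb the factors $\frac{i_b+1}{|I'|+1}$ and $\frac{|I'|}{2}$ from Lemma \ref{l.Moyal}. This depends on the normalization of Theorem \ref{t.Taylor} being the one stated, and I would check it carefully on small cases such as $I=0$ and $I=(0,\ldots,1,\ldots,0)$.
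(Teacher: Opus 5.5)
\textbf{Gap in the ``if'' direction of (ii).} Your part (i) and the ``only if'' half of (ii) follow the paper's argument, and your factorial bookkeeping there is correct, including the sign in the $Y_b$ case. The induction in the ``if'' half does not close as written, however. Your claim that the right-hand sides of the recursions involve only indices of weight $<|J|$ is false. By Notation \ref{n.coordi}, $|I_{-b}|=|I^{-b}|=|I|+1$.

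So when you solve $D_{X_b}\sigma=0$ for $\sigma_J$ with $J=I^{+b}$, the term $\frac{p+1}{2}\sigma_{I_{-b}}$ has the same weight $|J|$ as $\sigma_J$ itself. The $Y_b$-recursion has the same problem with $\sigma_{I^{-b}}$. The smallest example is $m=1$, $J=(1,1,0)$. There the $X_1$-equation at $I=(0,1,0)$ reads $\sigma_{(1,1,0)}=X_1\sigma_{(0,1,0)}-\frac12\sigma_{(0,0,1)}$, and $\sigma_{(0,0,1)}$ has weight $2$, like $J$. An induction hypothesis on total weight alone therefore does not tell you that $\sigma_{(0,0,1)}$ equals its Taylor coefficient $T\varphi$ when you treat $J=(1,1,0)$.

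\textbf{Repair.} Order the indices by the $(x,y)$-degree $|J'|=\sum_a(i_a+j_a)$, as the paper does, rather than by $|J|$.
\begin{itemize}
\item For $|J'|=0$, $J$ is a pure power of $t$. The $T$-recursion $(p+1)\sigma_{I^\sharp}=T\sigma_I$ determines $\sigma_J$ from $\sigma_O=\varphi$ by induction on $p$.
\item For $|J'|\ge1$, the $X_b$- or $Y_b$-recursion expresses $\sigma_J$ through $\sigma_I$ with $|I'|=|J'|-1$. It also involves $\sigma_{I_{-b}}$ (resp.\ $\sigma_{I^{-b}}$), whose $(x,y)$-degree is $|J'|-2$.
\item Both of these are covered by an induction hypothesis on $|J'|$. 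The weight constraints $|I|\le k-1$ and $|I|\le k-2$ are satisfied automatically because $|J|\le k$.
\end{itemize}
Equivalently, you can keep your outer induction on $|J|$ and add an inner induction on $|J'|$ within each weight. With either ordering, your comparison with the Taylor coefficients goes through exactly as you describe. Those coefficients satisfy the same recursions by the ``only if'' computation, so they agree with the $\sigma_J$.
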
 

\begin{proof}
Part (i) follows by substituting the displayed formulas for $D_{X_b}$, $D_{Y_b}$ and $D_T$ and using the Leibniz rule for $X_b$, $Y_b$ and $T$.
Let us prove (ii). 
 Assume that $\sigma = j^k(\varphi)$. Then $\varphi$ must be equal to $\sigma_O \in \sO_{\sH, h}$ for the index $$O =(i_1 =0, \ldots,i_m= 0, j_1 = 0, \ldots, j_m=  0, p=0).$$ Moreover,  for each $I, 0 \leq |I| \leq k$, 
Theorem \ref{t.Taylor} implies  
\begin{equation}\label{eq.sigma} \sigma_I = \frac{ \{ \prod_{a=1}^m X_a^{i_a} Y_a^{j_a}\} T^p \varphi}{|I'|! \ p!}, \end{equation} namely, 
$$\sigma_I (g) = \frac{ (\{ \prod_{a=1}^m X_a^{i_a} Y_a^{j_a}\} T^p \varphi)(g)}{|I'|! \ p!} $$ for any $g \in \sH$ near $h$. 
  By Lemma \ref{l.Moyal}, we have for each $I$ with $0 \leq |I| \leq k-1$,   \begin{eqnarray*} X_b \sigma_I &=&
   \frac{ X_b \{ \prod_{a=1}^m X_a^{i_a} Y_a^{j_a}\} T^p \varphi}{|I'|! \ p!} \\ & = & \frac{(i_b + 1)}{(|I'| +1)} \frac{  \{ X_b^{i_b +1} Y_b^{j_b} \prod_{a\neq b} X_a^{i_a} Y_a^{j_a}\} T^p \varphi}{|I'|! \ p!} \\ & & 
   + \frac{|I'|}{2} \frac{  \{ X_b^{i_b} Y_b^{j_b -1} \prod_{a\neq b} X_a^{i_a} Y_a^{j_a}\} T^{p+1}  \varphi}{|I'|! \ p!} \\ &=& 
   (i_b +1) \sigma_{I^{+b}} + \frac{(p+1)}{2} \sigma_{I_{-b}}.\end{eqnarray*} This implies  $D_{X_b} \sigma =0$. By a similar argument, we obtain $D_{Y_b} \sigma =0.$ On the other hand, for each $I$ with $0 \leq |I| \leq k-2$, \begin{eqnarray*} T \sigma_I & = & \frac{ \{ \prod_{a=1}^m X_a^{i_a} Y_a^{j_a}\} T^{p+1} \varphi}{|I'|! \ p!} \\ &=& (p+1) \sigma_{I^{\sharp}}. \end{eqnarray*}
   This implies $D_T \sigma =0.$ 
   
   Conversely, assume that $D_{X_b} \sigma = D_{Y_b} \sigma = D_T \sigma = 0,$
   namely, \begin{equation}\label{eq.Xb} X_b \sigma_I = (i_b +1) \sigma_{I^{+b}} + \frac{p+1}{2} \sigma_{I_{-b}}, \end{equation}
   \begin{equation}\label{eq.Yb} Y_b \sigma_I = (j_b +1) \sigma_{I_{+b}} - \frac{p+1}{2} \sigma_{I^{-b}} \end{equation}
   for $I = (i_1, \ldots, i_m, j_1, \ldots, j_m, p)$ with $|I| \leq k-1$ and \begin{equation}\label{eq.T} T \sigma_I = (p+1) \sigma_{I^{\sharp}} \end{equation} for  $I = (i_1, \ldots, i_m, j_1, \ldots, j_m, p)$ with $|I| \leq k-2.$
   We claim that the collection of  functions  $\sigma_I, 0 \leq |I| \leq k$ satisfying  the differential equations (\ref{eq.Xb}), (\ref{eq.Yb}) and (\ref{eq.T}) is uniquely determined by $\varphi = \sigma_O$.  This is by induction on $|I'|$ and $|I|$. In fact, (\ref{eq.T}) says that $\sigma_J$ is determined by $\sigma_I$ with $|I'| = |J'|$ and $|I| < |J|$. 
   Then (\ref{eq.Xb}) and (\ref{eq.Yb}) say that $\sigma_J$ is determined if we know $\sigma_I$ for all $I$ with $|I'| < |J'|$.
    Since we have already checked that the collection of functions given by (\ref{eq.sigma}) satisfies (\ref{eq.Xb}), (\ref{eq.Yb}) and (\ref{eq.T}), the uniqueness implies that $$\sigma_I = \frac{ \{ \prod_{a=1}^m X_a^{i_a} Y_a^{j_a}\} T^p \varphi}{|I'|! \ p!}.$$ Thus $\sigma = j^k(\varphi).$   
\end{proof}

\begin{definition}\label{d.delta}
 For $h \in \sH$, the vector space $U_k(\fn_h)^*$ is a subspace of $J^k_h$ by the exact sequence in Definition \ref{d.contact} (iii). By Theorem \ref{t.Taylor}, it can be identified with the vector space of polynomials of the form   $$ \sum_{|I|= k} c_I \ h^*(xyt)^I $$ with coefficients $c_I \in \C$.  
  For each $1 \leq a \leq m$, define the  homomorphism  $\delta_{\sX_a,h}$ (resp. $\delta_{\sY_a,h}$)  from $U_k(\fn_h)^*$ to $U_{k-1}(\fn_h)^*$
 as follows. 
\begin{eqnarray*} 
\delta_{\sX_a,h} \big( \sum_{|I|= k} c_I \ h^*(xyt)^I \big) & := & (\frac{\p}{\p x_a} + \frac{y_a}{2} \frac{\p}{\p t}) \big( \sum_{|I|= k} c_I \ h^*(xyt)^I \big)
 \\
\delta_{\sY_a,h} \big( \sum_{|I|= k} c_I \ h^*(xyt)^I \big) & := & (\frac{\p}{\p y_a} - \frac{x_a}{2} \frac{\p}{\p t}) \big( \sum_{|I|= k} c_I \ h^*(xyt)^I \big),
\end{eqnarray*}
where on the righthand side, the vector fields act on the polynomial functions on $\sH$ as derivative. 
Also define the homomorphism $\delta_{T,h}: U_k(\fn_h)^* \to U_{k-2}(\fn_h)^*$ by
\begin{eqnarray*} 
\delta_{T,h} \big( \sum_{|I|= k} c_I \ h^*(xyt)^I \big) & := &  \frac{\p}{\p t} \big( \sum_{|I|= k} c_I \ h^*(xyt)^I \big).
\end{eqnarray*}
They define vector bundle homomorphisms $$\delta_{\sX_a}, \delta_{\sY_a}: U_k(\fn)^* \to U_{k-1}(\fn)^*, \ \delta_T: U_k(\fn)^* \to U_{k-2}(\fn)^*$$ and corresponding sheaf homomorphisms, which we denote by the same symbols,
$$\delta_{\sX_a}, \delta_{\sY_a}: \sO(U_k(\fn)^*) \to \sO(U_{k-1}(\fn)^*), \ \delta_T: \sO(U_k(\fn)^*) \to \sO(U_{k-2}(\fn)^*).$$
\end{definition}

\begin{lemma}\label{l.Seashi}
In Definition \ref{d.delta}, we have the following commutative diagrams.
$$ \begin{array}{ccc}
\sO(U_k(\fn)^*) & \stackrel{- \delta_{\sX_a}}{\to} & \sO(U_{k-1}(\fn)^*) \\ \downarrow & & \downarrow \\
\sO(J^k)  & \stackrel{D_{X_a}}{\to} & \sO(J^{k-1}), \end{array}  
\begin{array}{ccc}
\sO(U_k(\fn)^*) & \stackrel{- \delta_{\sY_a}}{\to} & \sO(U_{k-1}(\fn)^*) \\ \downarrow & & \downarrow \\
\sO(J^k)  & \stackrel{D_{Y_a}}{\to} & \sO(J^{k-1}), \end{array} $$ and $$
\begin{array}{ccc}
\sO(U_k(\fn)^*) & \stackrel{- \delta_{T}}{\to} & \sO(U_{k-2}(\fn)^*) \\ \downarrow & & \downarrow \\
\sO(J^k)  & \stackrel{D_{T}}{\to} & \sO(J^{k-2}), \end{array} $$
where all the vertical arrows are from Definition \ref{d.contact} (iii). 
\end{lemma}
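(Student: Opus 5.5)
The plan is a direct computation on coefficients, in the spirit of Se-ashi's proof. By Theorem \ref{t.Spencer} (i), all three operators $D_{X_a}, D_{Y_a}, D_T$ satisfy a Leibniz rule in which the derivative of the function coefficient is multiplied by a truncation $\pi^k_{k-1}\sigma$ or $\pi^k_{k-2}\sigma$. A section of $U_k(\fn)^*\subset J^k$ is killed by these truncations. Hence both routes around each diagram are $\sO$-linear on $\sO(U_k(\fn)^*)$, and it suffices to compare them on the local frame $\sigma = g^*(xyt)^J$ with $|J|=k$. That is, we take $\sigma_J \equiv 1$ and all other $\sigma_I=0$.

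For such $\sigma$, the formula in Definition \ref{d.Spencer} gives
\[
D_{X_a}\sigma = -\sum_{|I|=k-1}\Big((i_a+1)\,\delta_{I^{+a},J} + \tfrac{p+1}{2}\,\delta_{I_{-a},J}\Big)\, g^*(xyt)^I ,
\]
because $X_a\sigma_I=0$ for constant coefficients. Only two indices $I$ contribute. The first is $I=J^{-}$, obtained by lowering $i_a$ by one, with coefficient $-j_{a}$ where we write $J=(i,j,p)$, i.e.\ coefficient $-i_a(J)$. The second is $I$ with $j_a$ raised by one and $p$ lowered by one, with coefficient $-\tfrac{p(J)}{2}$. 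On the other side, Definition \ref{d.delta} gives
\[
\delta_{\sX_a}\big((xyt)^J\big) = \Big(\tfrac{\p}{\p x_a}+\tfrac{y_a}{2}\tfrac{\p}{\p t}\Big)(xyt)^J = i_a (xyt)^{J-e_{x_a}} + \tfrac{p}{2}\,(xyt)^{J+e_{y_a}-e_t} .
\]
Its negative matches $D_{X_a}\sigma$ term by term once we note that the image lies in weighted degree exactly $k-1$, so the image of $D_{X_a}\sigma$ indeed lands in $U_{k-1}(\fn)^*$. The cases $D_{Y_a}$ and $D_T$ are identical computations, with the sign $-\tfrac{x_a}{2}$ matching $+\tfrac{p+1}{2}\sigma_{I^{-a}}$, and with $(p+1)\sigma_{I^\sharp}$ matching $\p_t t^{p}=p\,t^{p-1}$.

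The only subtle point, and the step I expect to need the most care, is justifying that the frames $g^*(xyt)^I$ behave as claimed. The basis used for $J^k$ and for $U_k(\fn)^*$ is translated by the moving point $g$. The vertical inclusion $U_k(\fn)^*\to J^k$ in these frames is simply $\sum_{|I|=k}c_I g^*(xyt)^I\mapsto$ the same expression with lower coefficients zero, by Theorem \ref{t.Taylor} and Definition \ref{d.delta}. We also need that $\delta_{\sX_a,g}$ applied to $g^*(xyt)^J$ is computed by the polynomial derivative in the translated variables. I would state this identification explicitly and verify the index bookkeeping of Notation \ref{n.coordi}: $I^{+a}=J$ iff $I=J$ with $i_a$ lowered, and $I_{-a}=J$ iff $I$ is $J$ with $j_a$ raised and $p$ lowered. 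After that, the sign and coefficient check is routine.
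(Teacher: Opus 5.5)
Your proposal is correct and is essentially the paper's proof. The paper writes a section of $U_k(\fn)^*$ as $\sum_{|I|=k}\sigma_I\, g^*(xyt)^I$ and substitutes it into Definition \ref{d.Spencer}. The $X_a\sigma_I$ terms vanish directly because $\sigma_I=0$ for $|I|\le k-1$, and the surviving terms $-(i_a+1)\sigma_{I^{+a}}-\frac{p+1}{2}\sigma_{I_{-a}}$ are matched with $-\delta_{\sX_a}$. Your Leibniz-rule reduction to constant-coefficient frames is just a repackaging of this same coefficient check. One small slip: for the index with $i_a$ lowered, the coefficient is $-i_a(J)$, not $-j_a$; you correct this yourself in the same sentence.
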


\begin{proof} As mentioned in Definition \ref{d.Spencer}, a local section of $J^k$ is of the form $$\sum_{|I|= 0}^k \sigma_I \ g^*(xyt)^I.$$ It is a local section of the vector subbundle $U_k(\fn)^* \subset J^k$ when $\sigma_I =0$ for $|I|< k$. Then by the definition of $D_{X_a}$ in Definition \ref{d.Spencer}, we have 
\begin{eqnarray*} D_{X_a} \left( \sum_{|I|= k} \sigma_I \ g^*(xyt)^I \right) &=& - \sum_{|I| = k-1}
((i_a +1) \sigma_{I^{+a}} + \frac{(p+1)}{2} \sigma_{I_{-a}})g^*(xyt)^I 
\\ & = & - \delta_{\sX_a} \left( \sum_{|I|= k} \sigma_I \ g^*(xyt)^I \right), \end{eqnarray*} which verifies the first commutative diagram. The other two can be proved by similar arguments. 
\end{proof}

\section{Contact fundamental forms}\label{s.cFF}

The following is a natural noncommutative version of the definition of the system of fundamental forms of a submanifold in projective space in  \cite[Definition 2.4]{FH}.

\begin{definition}\label{d.CF}
Let $(M, H \subset TM)$ be a contact manifold and let $\sL$ be a line bundle on $M$. Twisting the exact sequence in Definition \ref{d.contact} (iii), we have the exact sequence of vector bundles
\begin{equation}\label{eq.jet} 0 \to U_{k}(\fn)^*\otimes \sL \longrightarrow J^{k} \otimes \sL  \longrightarrow J^{k-1}\otimes \sL \to 0. \end{equation}
Suppose we have a fixed embedding $M \subset \BP^N$ as a complex submanifold of projective space. Let $\sL$ be the restriction of the hyperplane line bundle on $\BP^N$ to $M$. The embedding is given by a subspace $W \subset H^0(M, \sL)$.   For a point $z \in M$, we have a natural inclusion $W \subset \sO(\sL)_z$, determining a subspace  $j^k_z(W) \subset  J^k_z \otimes \sL_z$ for all $k \geq 0$. Define $$W^k_z := W \cap \mathbf{m}_z^k\cdot \sO(\sL)_z.$$  Then $j^{k}_z (W^{k+1}_z)=0$ and (\ref{eq.jet}) determine a  homomorphism $${\rm CF}^k_z : W_z^k/W_z^{k+1} \to U_k(\fn_z)^*\otimes \sL_z,$$ which we call the $k$-th {\em contact fundamental form } of $M \subset \BP^N$ at $z$. Let $S^k_z \subset U_k(\fn_z)^*$ be the image of ${\rm CF}^k_z$, modulo a local trivialization of $\sL$ at $z$. The graded subspace $S_z := \oplus_k S^k_z$ of $U(\fn_z)^*$ is called the {\em system of contact fundamental forms}. It is easy to see that $S_z$ is independent of the choice of a local trivialization of $\sL$ near $z$.   \end{definition}

An alternative formulation is the following.

\begin{lemma}\label{l.cap}
In Definition \ref{d.CF}, for each $z \in M$, after choosing a local trivialization of $\sL$ near $z$, write $\sW^k_z:= j^k_z(W)  \subset  J^k_z$  and regard $U_k(\fn_z)^*$ as a subspace of $J^k_z$ by the exact sequence in Definition \ref{d.contact} (iii).  Then $$S^k_z = \sW^k_z \cap U_k(\fn_z)^*.$$ \end{lemma}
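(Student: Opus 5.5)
The equality $S^k_z = \sW^k_z \cap U_k(\fn_z)^*$ is essentially a matter of unwinding definitions through the exact sequence (\ref{eq.jet}). Throughout I fix a local trivialization of $\sL$ near $z$, so that elements of $W$ become germs in $\sO_z$ and $J^k_z\otimes\sL_z$ becomes $J^k_z = \sO_z/\bm^{k+1}_z$. In this picture the inclusion $U_k(\fn_z)^*\subset J^k_z$ from Definition \ref{d.contact} (iii) is the kernel of $\pi^k_{k-1}: J^k_z\to J^{k-1}_z$, namely $\bm^k_z/\bm^{k+1}_z$. The fundamental form ${\rm CF}^k_z$ sends the class of $s\in W^k_z$ to $j^k_z(s)$. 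This element lies in $\bm^k_z/\bm^{k+1}_z = U_k(\fn_z)^*$ precisely because $s\in\bm^k_z$, and it is well defined on the quotient because $j^k_z(W^{k+1}_z)=0$.

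First I will show $S^k_z\subset \sW^k_z\cap U_k(\fn_z)^*$. An element of $S^k_z$ has the form $j^k_z(s)$ with $s\in W^k_z\subset W$. Hence it lies in $j^k_z(W)=\sW^k_z$. It also lies in $U_k(\fn_z)^*$, since $\pi^k_{k-1}j^k_z(s)=j^{k-1}_z(s)=0$ for $s\in\bm^k_z$.

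Next I will show the reverse inclusion. Take $\tau\in \sW^k_z\cap U_k(\fn_z)^*$ and write $\tau=j^k_z(s)$ with $s\in W$. Since $\tau\in U_k(\fn_z)^*=\ker\pi^k_{k-1}$, we get $j^{k-1}_z(s)=\pi^k_{k-1}(\tau)=0$. By the definition $J^{k-1}_z=\sO_z/\bm^k_z$, this says $s\in\bm^k_z$, so $s\in W\cap\bm^k_z=W^k_z$. Therefore $\tau = {\rm CF}^k_z([s])\in S^k_z$.

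The only step requiring care is confirming that the map $U_k(\fn_z)^*\to J^k_z$ in the exact sequence really is the inclusion of $\bm^k_z/\bm^{k+1}_z$. This identification is how the sequence in Definition \ref{d.contact} (iii) is constructed: the graded piece $\bm^k_z/\bm^{k+1}_z$ is paired with $U_k(\fn_z)$ via weighted-order-$k$ operators, as in Theorem \ref{t.Taylor} and Remark \ref{r.jet}. Beyond this point no real obstacle is expected. Finally, since changing the local trivialization multiplies all jets by an invertible germ and preserves both subspaces, the equality does not depend on the choice of trivialization.
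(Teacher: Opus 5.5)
Your argument is correct and follows essentially the paper's proof. The paper defines a map $\beta:\sW^k_z\cap U_k(\fn_z)^*\to S^k_z$ by lifting $u=j^k_z(w)$, uses the vanishing of the $(k-1)$-jet to get $w\in W^k_z$, and checks bijectivity; you instead note directly that ${\rm CF}^k_z([s])=j^k_z(s)$, so $\beta$ is the identity and both inclusions are immediate (and the identification you flag as needing care is automatic, since by exactness the image of $U_k(\fn_z)^*$ is $\ker\pi^k_{k-1}=\bm^k_z/\bm^{k+1}_z$).
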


\begin{proof}

Fix a local trivialization of $\sL$ near $z$. Let us define a homomorphism $\beta: \sW^k_z\cap U_k(\fn_z)^* \to S^k_z$.  For 
$u\in\sW^k_z\cap U_k(\fn_z)^*$, we can  choose
$w\in W$ with $j^k_z(w)=u$ by  the definition of $\sW^k_z$.  Since $u$ lies in
$U_k(\fn_z)^*=\ker(J^k_z\to J^{k-1}_z)$, the $(k-1)$-jet of $w$ at $z$
vanishes, hence $w\in W^k_z$.  Define
$$
\beta(u):={\rm CF}^k_z(w\ {\rm mod}\ W^{k+1}_z) \ \in S^k_z.
$$
This is well-defined: if $w'$ is another representative with
$j^k_z(w')=u$, then $j^k_z(w-w')=0$, so $w-w'\in W^{k+1}_z$ and the class
modulo $W^{k+1}_z$ is unchanged.

The map $\beta$ is injective.  If $\beta(u)=0$, choose $w\in W^k_z$ with
$j^k_z(w)=u$.  The vanishing of $\beta(u)$ means that the class of $w$ in
$W^k_z/W^{k+1}_z$ maps to zero under the contact fundamental form, hence
$j^k_z(w)=0$ in the kernel $U_k(\fn_z)^*$; therefore $u=0$.

The map $\beta$ is surjective by construction. In fact, every element of $S^k_z$ is
${\rm CF}^k_z(w\ {\rm mod}\ W^{k+1}_z)$ for some $w\in W^k_z$; its
$k$-jet lies in $\sW^k_z$ and, because its lower weighted jet vanishes, lies
in $U_k(\fn_z)^*$.  Thus it is the image under $\beta$ of an element of
$\sW^k_z\cap U_k(\fn_z)^*$.

Thus we have showed that $S^k_z$ and $\sW^k_z \cap U_k(\fn_z)^*$ are isomorphic by the homomorphism $\beta$. But it is easy to see that $\beta$ identifies the two subspaces of $U_k(\fn_z)^*$, proving the lemma.  
\end{proof}

Now we are ready to prove Theorem \ref{t.cCartan}.

\begin{proof}[Proof of Theorem \ref{t.cCartan}] 
For any $z \in M$, since there are elements of $W$ nonvanishing at $z$, we have $S_z^0=\C$.  Since the linear system $W$ defines an
embedding, the first weighted jets of affine coordinate functions coming from $W$ give $S_z^1=\fn_{z,1}^*$.   
Consider the homomorphism
$$
\rho_{2,z}:U_2(\fn_z)^*\longrightarrow \fn_{z,2}^*
$$ induced by $\fn_{z,2}\hookrightarrow U_2(\fn_z)$.
 Since the linear system $W$ defines an immersion, the
  differential at $z$ of some affine coordinate function coming from $W$ which vanishes at $z$  determines a nonzero element of 
  $T_z^*M$ annihilating $H_z$.  Thus it has a nonzero image under
$\rho_{2,z}$.  Since $\fn_{z,2}^*$ is one-dimensional, this proves
$\rho_{2,z}(S_z^2)=\fn_{z,2}^*$.

It remains to verify that $S_z$ for a general $z \in M$ is preserved under the operation of the right-invariant vector fields on the Heisenberg group of the Lie algebra $\fn_z$. 
For each $z \in M$, the descending sequence of
kernels of $j_z^k:W\to J_z^k\otimes\sL_z$ for $k >0$ stabilizes at zero: a
section of $\sL$ whose germ vanishes to arbitrary higher order is zero.  Since $W$ is
finite-dimensional, this happens for some finite $k_0$.  Over a Zariski-open subset of $M$,
the jet-image spaces
$$
\sW_z^k =j_z^k(W)\subset J_z^k\otimes\sL_z
$$
and their intersections with $U_k(\fn_z)^*\otimes\sL_z$ have constant rank
for $0\le k\le k_0$, while $S_z^k=0$ for $k>k_0$.  We work on this Zariski-open subset.

After a local trivialization of $\sL$, the subspaces $\sW_z^k$ form vector
subbundles $\sW^k\subset J^k$ on this open subset, and the intersections in
Lemma \ref{l.cap} form the corresponding subbundles of contact fundamental
forms.
By Darboux theorem for contact structures,  each point $z \in M^o$ admits a neighborhood $z \in O_z \subset M^o$ with a contactomorphic biholomorphism $\alpha: O_z \cong O_0$ of a neighborhood $0 \in O_0 \subset \sH$ in the Heisenberg group $\sH$ of the same dimension as $M$. Let us identify $O_z$ with $O_0$. 

We claim that differential operators $D_{X_a}$ and $ D_{Y_a}$  send $\sO(\sW^k) \subset \sO(J^k)$ to $\sO(\sW^{k-1}) \subset \sO(J^{k-1})$ and the differential operator $D_T$ sends $\sO(\sW^k) \subset \sO(J^k)$ to $\sO(\sW^{k-2}) \subset \sO(J^{k-2})$. In fact, an element of $\sO(\sW^k)$ can be expressed as $\sum_i \varphi_i w_i$ where $w_i$'s are sections of $J^k$ given by the contact $k$-jet of  an element of $W$ and $\varphi_i$ is a local holomorphic function. Then $D_{X_a} w_i =0$ by Theorem \ref{t.Spencer} (ii) and  $$D_{X_a}(\varphi_i w_i) = X_a(\varphi_i) \pi^k_{k-1} w_i + \varphi_i D_{X_a} w_i = X_a(\varphi_i) \pi^k_{k-1} w_i \ \in \ \sO(\sW^{k-1})$$
by Theorem \ref{t.Spencer} (i). This proves the claim for $D_{X_a}$. The same argument proves the claim for $D_{Y_a}$ and $D_T$.
 
By the above claim, the commutative diagrams in Lemma \ref{l.Seashi} and Lemma \ref{l.cap},  the system of contact fundamental forms must be preserved under $\delta_{\sX_a}, \delta_{\sY_a}$ and $\delta_{T}$. This proves the theorem.   
\end{proof}

\section{Heisenberg-symmetric varieties}\label{s.Hsymmetric}

\begin{definition}\label{d.Hsymmetric}
Let $S \subset U(\fh)^*$ be a contact symbol system on the graded Heisenberg algebra $\fh$.
Regarding $U(\fh)^*$ as the space of  polynomial functions on the Heisenberg group $\sH$, the subspace $S \subset U(\fh)^*$ is a linear system of  regular functions on the affine variety $\sH$. This gives rise to a morphism $\Phi_S: \sH \to \BP S^*$. Let $Z^S \subset \BP S^*$ be the closure of the image of $\Phi_S$. We call it the {\em Heisenberg-symmetric variety} determined by the contact symbol system $S \subset U(\fh)^*$. 
\end{definition}

Theorem \ref{t.cFH} is a consequence of the following two propositions.  
\begin{proposition}\label{p.Hsymmetric}
In Definition \ref{d.Hsymmetric}, the following holds.
\begin{itemize}
\item[(i)] The morphism $\Phi_S$ is an embedding and  left translations of $\sH$ can be extended to an action of $\sH$ on $\BP S^*$ that preserves the subvariety $Z^S \subset \BP S^*$.
\item[(ii)]
For each $z \in \sH$, the contact fundamental form $S_z$ of the contact manifold $(\sH, H \subset T\sH)$ embedded in $\BP S^*$ 
is isomorphic to $S \subset U(\fh)^*$ by a graded Lie algebra isomorphism $\fn_z \cong \fh$. 
\item[(iii)] For each $z \in \sH$, there exists a $\C^*$-action on $\BP S^*$ which preserves $\sH \subset Z^S$ and its contact structure, with an isolated fixed point at $z$, such that the induced $\C^*$-action on $H_z$ is by scalar multiplications.
    \end{itemize} \end{proposition}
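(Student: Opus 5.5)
The plan is to work throughout with the identification of Remark \ref{r.jet}: $U(\fh)^*$ is the space of polynomial functions on $\sH$, and $U_k(\fh)^*$ consists of the polynomials of weighted degree $k$ in $(x_a,y_a,t)$.

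\emph{Step 1: left translations preserve $S$.} For $h\in\sH$ and $F\in S$, consider $h^*F(g)=F(h^{-1}g)$. Right-invariant vector fields generate left translations, so $\frac{d}{ds}(\exp(sv))^*F = -\sX_v F$. Since $S$ is finite dimensional and stable under all $\sX_v$, the group generated acts on $S$ by $h^*|_S=\exp(-\sX_{\log h})|_S$. This exponential is a finite sum, because the $\sX_v$ lower weighted degree, so they act nilpotently on $S$. Hence $h^*S=S$, and we get a linear representation of $\sH$ on $S$, hence on $S^*$ and on $\BP S^*$. This action makes $\Phi_S$ equivariant: $\Phi_S(hg)$ corresponds to the functional $F\mapsto F(hg)=((h^{-1})^*F)(g)$. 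Consequently the action preserves $\Phi_S(\sH)$ and therefore its closure $Z^S$.

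\emph{Step 2: $\Phi_S$ is an embedding.} We have $S^0=\C$, so the constant $1$ lies in $S$ and $\Phi_S$ lands in the affine chart $\{1\neq0\}$. The linear coordinates $x_a,y_a\in S^1=\fh_1^*$ are in $S$. The condition $\rho_2(S^2)=\fh_2^*$ gives some $F\in S^2$ of the form $F=ct+q(x,y)$ with $c\ne0$ and $q$ quadratic. The map $\sH\to\A^{2m+1}$, $(x,y,t)\mapsto (x,y,F)$, is then an isomorphism onto $\A^{2m+1}$, since $t$ is recovered from $F$. This map factors through $\Phi_S$ followed by a linear projection. Hence $\Phi_S$ is a closed embedding into the affine chart, i.e.\ a locally closed embedding into $\BP S^*$. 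By Step 1 this holds equivariantly, which settles (i).

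\emph{Step 3: part (ii).} By equivariance it suffices to treat $z=0$. The action of $h$ is linear on $\BP S^*$ and contactomorphic on $\sH$, since left translations preserve the left-invariant structure $H$. So it carries the system of contact fundamental forms at $0$ to that at $h$, compatibly with $\fn_0\cong\fn_h\cong\fh$ via left translation. At $z=0$, use the trivialization of $\sL$ given by the constant section $1$; then $W=S$ as a space of functions. Theorem \ref{t.Taylor} identifies the contact $k$-jet at $0$ of a polynomial with its truncation to weighted degree $\le k$. Thus $j^k_0(S)$ consists of the truncations $\{F_{\le k}:F\in S\}$, and since $S$ is graded this equals $\oplus_{j\le k}S^j$. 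Its intersection with $U_k(\fh)^*$ is therefore $S^k$, and Lemma \ref{l.cap} gives $S_0=S$.

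\emph{Step 4: part (iii).} Again reduce to $z=0$ by translation. Consider the weighted dilation $\lambda\cdot(x,y,t)=(\lambda x,\lambda y,\lambda^2t)$. It is a group automorphism of $\sH$ fixing $0$, and it preserves $\theta$ up to the scalar $\lambda^2$, hence preserves $H$. It acts on $U_k(\fh)^*$ by $\lambda^k$, so it preserves the graded space $S$ and induces a $\C^*$-action on $\BP S^*$ under which $\Phi_S$ is equivariant. On $H_0$ the action is scalar multiplication by $\lambda$. The only fixed point in $\sH$ is $0$, because the weights on $(x,y,t)$ are positive.

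The only genuinely delicate point is Step 3: one must check that left translation by $h$ intertwines the jet filtrations, so that the identification $\fn_h\cong\fh$ is a graded Lie algebra isomorphism carrying $S_h$ to $S$. One must also make sure the jet truncation of Theorem \ref{t.Taylor} at $0$ agrees with truncation by weighted degree of polynomials. This agreement is clear because weighted-homogeneous polynomials of degree $>k$ lie in $\bm_0^{k+1}$.
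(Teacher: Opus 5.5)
Your proposal is correct. For the invariance of $S$, part (ii) and part (iii) it follows the paper's proof closely:
\begin{itemize}
\item right-invariant vector fields generate left translations, so $S$ is preserved;
\item (ii) and (iii) are reduced to the identity via the transitive action;
\item (ii) is read off from the grading of $S$ (the paper uses $S\cap\bm_0^k=\bigoplus_{r\ge k}S^r$ directly rather than Lemma \ref{l.cap}, an inessential difference);
\item (iii) uses the Heisenberg dilation.
\end{itemize}
A small point of precision: your formula $\frac{d}{ds}(\exp(sv))^*F=-\sX_vF$ holds only at $s=0$. For general $s$ the derivative is $-\sX_v(\exp(sv))^*F$, because $\sX_v$ commutes with left translation by $\exp(sv)$. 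That is what actually yields $h^*|_S=\exp(-\sX_{\log h})|_S$.

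The genuine difference is how you prove that $\Phi_S$ is an embedding.

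\textbf{The paper's argument.} $\Phi_S$ is immersive because $S^1=\fh_1^*$ and $\rho_2(S^2)=\fh_2^*$. Hence the isotropy group of the $\sH$-action at an image point is finite. The unipotent group $\sH$ has no nontrivial finite subgroups, so $\Phi_S$ is an injective immersive orbit map. This implicitly uses the standard fact that such an orbit map is an isomorphism onto the orbit.

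\textbf{Your argument.} You compose $\Phi_S$ with the linear projection onto the coordinates $(x_a,y_a,F)$, where $F=ct+q$ with $c\neq 0$, and obtain an isomorphism $\sH\cong\A^{2m+1}$. So $\Phi_S$ has a left inverse and is a closed embedding into the chart $\{\lambda(\mathbf 1)\neq 0\}$ of $\BP S^*$.

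\textbf{What each buys.} Your route is more elementary and does not use the group action. It also gives more: $\Phi_S(\sH)=Z^S\cap\{\lambda(\mathbf 1)\neq0\}$ is open in $Z^S$. This openness is what turns your Step 4 observation, that $0$ is the only fixed point in $\sH$, into isolatedness of $z$ in $Z^S$. For isolatedness in $\BP S^*$ itself, add that the weight-zero subspace of $S^*$ is the line $(S^0)^*$. Every other point of that chart then has a coordinate of positive weight and is moved by the $\C^*$-action.
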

    
    \begin{proof}
     Since $S^1 = \fh_1^*$ and $\rho_2 (S^2) = \fh_2^*$, the morphism $\Phi_S$ is immersive. 
    The linear system $S$ on $\sH$ is preserved under the operation of right-invariant vector fields.   A right-invariant vector field on $\sH$ generates a one-parameter subgroup of left translations on $\sH$ (see \cite[Chapter I, Proposition 4.1]{KN}). Thus the linear system $S$ is preserved under left translations on $\sH$.   It follows that $\sH$ acts on $\BP S^*$ preserving $Z^S$.
    Since $\Phi_S$ is algebraic, the isotropy subgroup of the $\sH$-action at a general point of $Z^S$ must be a finite subgroup.
    But $\sH$ has no nontrivial finite subgroup. If follows that $\Phi_S$ is an embedding. 
    This proves (i).
    

By the transitive $\sH$-action from (i), it suffices to check (ii) and (iii) at the identity element $0\in\sH$.

For (ii),   since $S$ is graded by weighted degree,
$$
S\cap\bm_0^k=\bigoplus_{r\geq k}S^r,\qquad
\frac{S\cap\bm_0^k}{S\cap\bm_0^{k+1}}\simeq S^k.
$$
Under the identification of Remark \ref{r.jet}, this quotient is precisely the $k$-th contact fundamental form.  Hence the full system at the identity element is $S$.

For (iii), for $\lambda\in\C^*$ define the Heisenberg dilation
$$
\delta_\lambda(v+w)=\lambda v+\lambda^2w,\qquad
v\in\fh_1,\quad w\in\fh_2.
$$
It is a group automorphism of $\sH$ preserving the standard contact distribution.  Since $S$ is graded, pullback by $\delta_\lambda$ preserves $S$, so the evaluation embedding is equivariant for a projective linear $\C^*$-action on $\BP S^*$.  In the affine chart $U_1$, this action has weight $1$ on the contact directions and weight $2$ on the quotient direction.  Thus the identity element is an isolated fixed point and the action on its contact hyperplane is by scalar multiplications .  
\end{proof}

%

\begin{proposition}\label{p.unique}
Let $Z \subset \BP^N$ be a nondegenerate projective variety with a Zariski-open subset $M \subset Z$ equipped with a contact structure $H \subset TM$.
Assume, for a fixed contact symbol system $S\subset U(\fh)^*$, that $M \subset \BP^N$ satisfies the conditions (i) and (ii) in Theorem \ref{t.cFH}. Then for each point $z \in M$, there exists a projective linear  isomorphism $\BP^N \cong \BP S^*$, which sends a Zariski-open neighborhood $z \in O_z \subset M$ to $\sH \subset Z^S$ biregularly, preserving the contact structures. \end{proposition}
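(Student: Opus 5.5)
We will use the $\C^*$-action in condition (ii) to linearize $M$ near $z$ and produce an $\sH$-equivariant identification with $Z^S$. This is the contact analogue of the uniqueness argument for Euler-symmetric varieties in \cite[Section 3]{FH}.

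First, fix $z\in M$ and the subgroup $\C^*\subset{\rm PGL}(N+1)$ from (ii). We lift it to a linear $\C^*$-action on $\C^{N+1}$ and decompose $\C^{N+1}=\oplus_k V_k$ into weight spaces. We normalize the lift so that the line $\hat z$ has weight $0$. Next we use the linear coordinates $W=(\C^{N+1})^*$ and the trivialization of $\sL$ near $z$ given by a linear form $w_0$ with $w_0(\hat z)\neq0$; we may take $w_0$ to be a weight vector. The weight filtration of $W$ should then coincide with the filtration $W^k_z=W\cap\bm_z^k\sO(\sL)_z$, because $\C^*$ acts on $H_z$ with weight $1$. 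It then acts on $T_zM/H_z$ with weight $2$, since the bracket $\wedge^2H_z\to T_zM/H_z$ is equivariant and nonzero. So the action on $U_k(\fn_z)$ has pure weight $k$.

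Second, we show that near the isolated fixed point $z$ the $\C^*$-action is linearizable, in the contact sense, to the Heisenberg dilation $\delta_\lambda$. Then we choose Darboux coordinates $(x_a,y_a,t)$ that are $\C^*$-homogeneous of weights $1,1,2$. This is the main obstacle. Our first approach is to average a Darboux chart, or to use the fact that the $\C^*$-action is algebraic with positive weights on $T_zM$, so the Białynicki-Birula cell of $z$ is an affine space $\A^{2m+1}$ with a linear action. Positivity of the weights makes every holomorphic object on this cell polynomial in homogeneous coordinates. The contact form $\theta$ can be rescaled to be homogeneous of weight $2$. A weighted Darboux theorem (Moser's method, run equivariantly, with all terms of positive weight so the homotopy converges formally and is polynomial) then identifies the cell with $\sH$, the contact structures, and the actions of $\C^*$ and $\delta_\lambda$. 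In these coordinates each weight vector $w\in W$, divided by $w_0$, is a weighted-homogeneous polynomial of degree equal to its weight, so it equals its own contact jet at $0$.

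Third, by Lemma \ref{l.cap} and Theorem \ref{t.Taylor}, the space of functions $W/w_0$ on the cell is exactly $\oplus_k S^k_z$, viewed as polynomials on $\sH$. The graded isomorphism $\fn_z\cong\fh$ of condition (i) is realized by our Darboux coordinates after a linear symplectic change commuting with $\delta_\lambda$. Through it, $W/w_0$ becomes $S\subset U(\fh)^*$. Hence the evaluation map $O_z\to\BP W^*$ coincides with $\Phi_S:\sH\to\BP S^*$ after the linear isomorphism $W\cong S$. Nondegeneracy of $Z$ ensures that $\dim W=\dim S$ and that this isomorphism is injective. This gives the projective linear isomorphism $\BP^N\cong\BP S^*$ carrying $O_z$, the cell, biregularly onto $\sH$ and preserving the contact structures.
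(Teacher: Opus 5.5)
Your proposal follows essentially the same route as the paper: you split the weighted vanishing filtration of $W$ by $\C^*$-weights, identify the attracting cell of $z$ with $\A^{2m+1}$ carrying weights $1,\ldots,1,2$, put a semi-invariant weight-two contact form into Heisenberg normal form, and observe that each $\ell/\ell_0$ is weighted homogeneous, hence equal to its own contact jet, so that $W\simeq S$. The step you flag as the main obstacle is in fact immediate, because a weight-two one-form in these coordinates is automatically of the form $c\,{\rm d}s+\sum a_{\alpha\beta}z_\alpha{\rm d}z_\beta$ and needs no Moser argument; the paper also checks two points you pass over, namely that the cell $Z\cap A_z$ lies in $M$ (every point of the chart $A_z$ flows to $z$, while $Z\setminus M$ is closed, $\C^*$-invariant and does not contain $z$), and that the weight on $H_z$ is forced to be $1$ by effectiveness of the one-parameter subgroup.
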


\begin{proof}
Fix $z\in M$ and write $\BP^N=\BP W^*$.  Let
$
F^kW:=W_z^k
$
be the weighted vanishing filtration of the ambient linear system $W$ at $z$.
By Definition \ref{d.CF} and Lemma \ref{l.cap}, condition (i) identifies
the associated graded space with $S_z\simeq S$; in particular $\dim W=\dim S$.
Let $\lambda:\C^*\hookrightarrow{\rm PGL}(W)$ be the subgroup in condition
(ii).  Choose a linear lift of $\lambda$ to $W$, normalized so that it acts
trivially on $F^0W/F^1W$.  Replacing $\lambda(t)$ by $\lambda(t^{-1})$ if
necessary, assume that the weight on $H_z$ is $a>0$.  The nondegenerate Levi
bracket $\wedge^2H_z\to T_zM/H_z$ is equivariant, so the quotient
$T_zM/H_z$ has weight $2a$.  It follows that the induced action on
$
F^kW/F^{k+1}W\simeq S_z^k
$
has weight $-ka$. Since all
projective weights are multiples of $a$ and $\lambda$ is an effective
one-parameter subgroup of ${\rm PGL}(W)$, we have $a=1$.

Since $\C^*$ is reductive, the filtration splits equivariantly:
$$
W=\bigoplus_{k\ge0}W_k,\qquad
F^kW=\bigoplus_{j\ge k}W_j,\qquad
W_k\simeq S_z^k.
$$
The space $W_0$ is one-dimensional because $S_z^0=\C$.    Passing to the dual,
we obtain
$$
W^*=\C v_0\oplus\bigoplus_{k>0}W_k^*,
$$
where $z=[v_0]$, the line $\C v_0$ has weight $0$, and all other weights are
positive.

Choose $\ell_0\in W_0$ with $v_0(\ell_0)=1$.  The attracting set of $z$ (in the sense of \cite[Section 2.4]{CG}) in
$\BP(W^*)$ is the affine chart
$$
A_z
=\{\,[\alpha]\in\BP(W^*)\mid \alpha(\ell_0)\ne0\,\}
=\left\{[v_0+w]\ \middle|\ w\in\bigoplus_{k>0}W_k^*\right\}.
$$

Let $O_z:=Z\cap A_z$, which is a $\C^*$-stable affine open neighborhood of $z$ in $Z$.  
If there exists any $x\in O_z\cap(Z\setminus M)$, then the closed
$\C^*$-invariant set $Z\setminus M$ contains $\lambda(t)x$ for all
$t\ne0$, and hence also its limit $z$, which is a contradiction. 
This shows that $O_z\subset M$, thus $O_z$ is a
smooth affine neighborhood of $z$ in $M$.  By the Bialynicki-Birula decomposition (see \cite[Theorem 2.4.3]{CG}), we have a
$\C^*$-equivariant isomorphism $O_z\simeq\A^{2m+1}$
with coordinates $z_1, \ldots z_{2m}, s$ of weights $1,\ldots,1,2$.

Now let us show that the contact structure on $O_z$ can be put into Heisenberg normal form.    Since $\operatorname{Pic}(O_z)=0$, the $\lambda$-linearized line bundle  $\operatorname{Ann}(H|_{O_z})\subset\Omega^1_{O_z}$ admits  a nowhere-vanishing algebraic section $\theta$. As $\lambda$ preserves the contact structure,  there exists  an invertible regular function $u_t$ on $O_z$ such that 
$\lambda(t)^*\theta=u_t\theta$.
 Such a function is constant, and the constants $u_t$ form a character; hence $\theta$ is semi-invariant.  Its weight is $2$, because $\theta_z$ spans the annihilator of $H_z$, dual to the weight-two quotient direction.  It follows that
$$
\theta=c\,{\rm d} s +\sum_{\alpha,\beta}a_{\alpha\beta}z_\alpha\, {\rm d} z_\beta,
\qquad c\ne0,
$$
where the $z_\alpha$ are the weight-one coordinates.  After rescaling $s$, we can set $c=-2$.  Replacing $s$ by $s+q(z)$ for a suitable homogeneous quadratic polynomial removes the symmetric part of $(a_{\alpha\beta})$.  The remaining skew form is nondegenerate because $\theta$ is contact, and a linear symplectic change of variables gives
$$
\theta= \sum_{a=1}^m(p_a\,{\rm d} q_a-q_a\, {\rm d} p_a) -2 {\rm d}s.
$$
Thus $(O_z,H|_{O_z})$ is contactomorphic to the standard Heisenberg group, and condition (i) identifies its symbol algebra with the fixed algebra $\fh$.

Finally, we identify the restricted projective linear system.  Choose a $\lambda$-eigenvector $\ell_0\in W$ with $\ell_0(z)\ne0$.  The description of $A_z$ shows that $\ell_0$ is nowhere zero on $O_z$.  Set
$$
V:=\left\{\left.\frac{\ell}{\ell_0}\right|_{O_z}\ \middle|\ \ell\in W\right\}
\subset\C[O_z].
$$
Since $O_z$ contains a nonempty Zariski-open subset of the nondegenerate variety $Z$, the restriction map $W\to V$ is injective, hence an isomorphism.  The space $V$ is $\C^*$-stable, so it is a direct sum of weighted-homogeneous subspaces.  Each weighted-homogeneous polynomial equals its weighted initial form; consequently the natural map $V\to\gr_zV$ is an isomorphism of graded vector spaces.  By Definition \ref{d.CF}, $\gr_zV=S_z$, and condition (i) identifies this system with $S$.  Hence $W\simeq S$ as graded linear systems on the Heisenberg group.  The induced projective linear isomorphism
$\BP W^*\simeq\BP S^*$
sends the embedding of $O_z$ to $\Phi_S(\sH)$ and preserves the contact structures.
\end{proof} 

We now examine some examples of  Heisenberg-symmetric varieties. The first result says that 
for a rank 2 contact symbol system, its associated Heisenberg-symmetric variety is always a cone, hence it is
smooth only if it is the ambient projective space.

\begin{proposition}\label{p.rank-two-cone}
Let $S\subset U(\fh)^*$ be a contact symbol system of rank $2$.  Then
$Z^S\subset\BP S^*$ is a projective cone.  It is smooth if and only if
$Z^S=\BP S^*$.
\end{proposition}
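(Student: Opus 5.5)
Write $S=S^0\oplus S^1\oplus S^2$ with $S^0=\C$, $S^1=\fh_1^*$ spanned by $x_a,y_a$, and $S^2\subset U_2(\fh)^*$. Under Remark \ref{r.jet}, $U_2(\fh)^*$ is spanned by the quadratic monomials in $x_a,y_a$ together with $t$. Since $\rho_2(S^2)=\fh_2^*$, some element of $S^2$ has a nonzero $t$-coefficient. I will show that $Z^S$ is the cone over a lower-dimensional variety with vertex the point $[e]\in\BP S^*$ given by the linear functional $e$ dual to the constant function $1\in S^0$ (extended by zero on $S^1\oplus S^2$). Concretely, choose a basis $1,x_a,y_a,q_1,\dots,q_r$ of $S$ with $q_j\in S^2$. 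Then $\Phi_S(h)=[1:x(h):y(h):q(h)]$, and the $\C^*$-dilations $\delta_\lambda$ from Proposition \ref{p.Hsymmetric}(iii) act by $[1:\lambda x:\lambda y:\lambda^2 q]$. This alone does not give a cone, so I will instead exploit the right-invariant vector fields.

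The key step is that $T=\p/\p t$ is right-invariant, and its flow on $\sH$ is $t\mapsto t+c$ (a left translation by $\exp(cT)$). Write $q_j=Q_j(x,y)+\mu_j t$ with some $\mu_j\neq0$; after a change of basis of $S^2$, assume $\mu_1=1$ and $\mu_j=0$ for $j\ge 2$. Then $\Phi_S(x,y,t)=[1:x:y:Q_1+t:Q_2:\dots:Q_r]$. Next I analyze the coordinate corresponding to $q_1$. The image is the graph-like set in which the $q_1$-coordinate is free, because $t$ is arbitrary and appears nowhere else. Hence $\Phi_S(\sH)=\{[1:x:y:s:Q_2(x,y):\dots]\}$ with $s$ arbitrary. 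Its closure is the join of the point $P=[0:0:0:1:0:\dots:0]$ (the vertex) with the closure $Y$ of $\{[1:x:y:0:Q_2:\dots:Q_r]\}$ in the hyperplane $\{q_1\text{-coordinate}=0\}$. Indeed, for fixed $(x,y)$ the line through $P$ and $[1:x:y:0:Q]$ lies in the closure, and conversely the image is contained in this join. Since the join is closed and irreducible of the correct dimension $\dim Y+1=2m+1$, we conclude $Z^S=\mathrm{Cone}_P(Y)$. This settles the first claim.

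For smoothness, a cone $\mathrm{Cone}_P(Y)$ with $Y$ a variety in a hyperplane $\BP^{N-1}$ is smooth at its vertex only if it is a linear space. The tangent cone at $P$ is the cone itself, so smoothness at $P$ forces $Z^S$ to equal its Zariski tangent space at $P$, which is linear. As $Z^S$ is nondegenerate in $\BP S^*$ (the functions in $S$ are linearly independent on $\sH$), this gives $Z^S=\BP S^*$. The converse is trivial. The step I expect to need the most care is justifying the standard fact that a smooth cone is linear; I would phrase it via the tangent cone. The reduction of $S^2$ to a single $t$-involving generator is routine linear algebra and uses only that $t$ is linear in $q$.
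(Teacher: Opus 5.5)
Your argument is correct and is essentially the paper's proof: you split $S^2=\C F_0\oplus K$ with $K=\ker(\rho_2|_{S^2})$, observe that $t$ enters only the $F_0$-coordinate so the image is $\{[1:\xi:K(\xi):a]\}$ whose closure is the cone over the closure of $\xi\mapsto[1:\xi:K(\xi)]$, and then use that a cone smooth at its vertex is linear together with nondegeneracy. One slip to fix: the vertex is your later point $P$ (the functional annihilating the hyperplane $S^0\oplus S^1\oplus K$), not the point dual to the constant function $1$ announced in your first paragraph, which is $\Phi_S(0)$ and is a smooth point of $Z^S$ whenever $K\neq 0$.
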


\begin{proof}
Choose $F_0\in S^2$ with $\rho_2(F_0)\ne0$ and write
$$
S^2=\C F_0\oplus K,\qquad K:=\ker(\rho_2|_{S^2}).
$$
We first exhibit the cone structure.  Choose a linear coordinate $t$ on
$\fh_2$.  In exponential coordinates
$(\xi,t)$ on the Heisenberg group, every element of $U_2(\fh)^*$ has the form
$c\,t+Q(\xi)$ with $Q\in\Sym^2\fh_1^*$.  Thus the elements of
$K=\ker(\rho_2|_{S^2})$ are precisely the quadratic functions in
$S^2$ with no $t$-term.  Write
$$
F_0(\xi,t)=c\,t+Q_0(\xi),\qquad c\ne0.
$$
Let $\mathbf 1$ be the constant function in $S^0$ and put
$$
U_{\mathbf 1}:=\{[\lambda]\in\BP S^*\mid \lambda(\mathbf 1)\ne0\}.
$$
On $U_{\mathbf 1}$ we normalize $\lambda(\mathbf 1)=1$.  With respect to the
decomposition
$$
S=\C\mathbf 1\oplus S^1\oplus K\oplus \C F_0,
$$
the point $\Phi_S(\xi,t)$ has affine coordinates
$[1:\xi:K(\xi):F_0(\xi,t)]$.  For fixed $\xi$, the last coordinate
$F_0(\xi,t)$ is arbitrary as $t$ varies.  Hence
$$
Z^S\cap U_{\mathbf 1}
=\{[1:\xi:K(\xi):a]\mid \xi\in\fh_1,\ a\in\C\}\subset
\BP(S^0\oplus S^1\oplus K\oplus\C F_0)^*.
$$
 Taking its projective
closure gives the projective cone with vertex $v=[0:0:0:1]$ over the closure of
$$
\fh_1\longrightarrow \BP(S^0\oplus S^1\oplus K)^*,\qquad
\xi\longmapsto [1:\xi:K(\xi)].
$$

Being a cone, it is smooth if and only if the base is linear, namely if and only if $K=0$, which is equivalent to $Z^S = \BP S^*.$
\end{proof}

We  now  give a  class of  examples of smooth Heisenberg-symmetric varieties: some Segre varieties. 
  For an $m$-dimensional vector space $V$, 
let
$$
\fh(V) := V\oplus V^*\oplus \C z $$ be the Heisenberg algebra with non-trivial Lie brackets given by
 $$  [{\mathbf x},{\mathbf y}]={\mathbf y}({\mathbf x}) z \  \mbox{ for } \ {\mathbf x} \in V, {\mathbf y} \in V^*.
$$
Let $\sH(V)$ be the corresponding Heisenberg group.  In exponential
coordinates $({\mathbf x},{\mathbf y},t),$ set
$$
\tau=t-\frac12{\mathbf y}({\mathbf x}).
$$
For ${\mathbf x}_0\in V$ and ${\mathbf y}_0\in V^*$, the right-invariant vector fields are
$$
\sX_{{\mathbf x}_0}=\partial_{{\mathbf x}_0},\qquad
\sY_{{\mathbf y}_0}=\partial_{{\mathbf y}_0}
-{\mathbf y}_0({\mathbf x})\partial_\tau,\qquad
T=\partial_\tau .
$$
In fact, by Lemma \ref{l.coordi} (i), the group law in the coordinates
$({\mathbf x},{\mathbf y},\tau)$ is
$$
({\mathbf x},{\mathbf y},\tau)\cdot({\mathbf x}',{\mathbf y}',\tau')
=({\mathbf x}+{\mathbf x}',{\mathbf y}+{\mathbf y}',
\tau+\tau'-{\mathbf y}({\mathbf x}')).
$$
This follows by substituting $t=\tau+\frac12{\mathbf y}({\mathbf x})$ in the
Campbell--Hausdorff formula.  The right-invariant vector field with value
${\mathbf x}_0$ at the identity is obtained by differentiating
$$
s\longmapsto (s{\mathbf x}_0,0,0)\cdot({\mathbf x},{\mathbf y},\tau),
$$
which gives $\partial_{{\mathbf x}_0}$.  Similarly, differentiating
$$
s\longmapsto (0,s{\mathbf y}_0,0)\cdot({\mathbf x},{\mathbf y},\tau)
$$
gives $\partial_{{\mathbf y}_0}-{\mathbf y}_0({\mathbf x})\partial_\tau$,
and differentiating
$s\longmapsto (0,0,s)\cdot({\mathbf x},{\mathbf y},\tau)$ gives
$\partial_\tau$.

Fix $0 < r < \dim V$ and let 
$$ {\mathbf b} := (
V^*=B_1\oplus\cdots\oplus B_r) $$ be a decomposition of $V^*$ into $r$ positive-dimensional factors. 
Writing ${\mathbf y}={\mathbf y}_1+\cdots+{\mathbf y}_r$ with ${\mathbf y}_j\in B_j$,
define
$$
\Psi_{\mathbf b}:\sH(V)\longrightarrow
\BP(\C\oplus V\oplus \C)\times\prod_{j=1}^r\BP(\C\oplus B_j)
$$
by
$$
({\mathbf x},{\mathbf y}, t)\longmapsto
\big([1:{\mathbf x}:\tau],[1:{\mathbf y}_1],\ldots,[1:{\mathbf y}_r]\big).
$$
Let $S_{\mathbf b}$ be the $\Psi_{\mathbf b}$-pullback of the Segre linear system:
$$
S_{\mathbf b}:=(\C\oplus V^*\oplus \C\tau)
\prod_{j=1}^r(\C\oplus B_j^*)\subset \C[\sH(V)]=U(\fh(V))^*.
$$

\begin{proposition}\label{p.product-examples}
The graded subspace $S_{\mathbf b}\subset U(\fh(V))^*$ is a contact symbol
system with ${\rm rank}(S_{\mathbf b})=r+2$.  Its associated Heisenberg-symmetric variety is the Segre embedding
$$
\BP^{m+1}\times\BP^{b_1}\times\cdots\times\BP^{b_r}
\subset \BP(S_{\mathbf b}^*), \qquad b_j = \dim B_j.
$$
\end{proposition}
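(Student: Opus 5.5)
We have to verify three things for $S_{\mathbf b}$: that it is a contact symbol system, that its rank is $r+2$, and that $Z^{S_{\mathbf b}}$ is the stated Segre variety. Throughout we use the coordinates $({\mathbf x},{\mathbf y},\tau)$ and the right-invariant fields listed before the statement.

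\emph{Grading.} The change $\tau=t-\frac12{\mathbf y}({\mathbf x})$ is weighted homogeneous of weight $2$. Therefore every product
$$\ell\cdot\prod_{j\in J}\mu_j,\qquad \ell\in\{1\}\cup V^*\cup\{\tau\},\ \ \mu_j\in B_j^*,$$
is weighted homogeneous, of weight $0$, $1$ or $2$ (according to $\ell$) plus $|J|$. Hence $S_{\mathbf b}$ is a graded subspace of $U(\fh(V))^*$. The top weight $r+2$ is attained by $\tau\mu_1\cdots\mu_r\neq0$, so ${\rm rk}(S_{\mathbf b})=r+2$.

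\emph{Low degrees.} In degree $0$ only the constants occur, so $S^0=\C$. In degree $1$ we get $V^*\oplus\bigoplus_j B_j^*=\fh_1^*$. In degree $2$, the element $\tau=t-\frac12{\mathbf y}({\mathbf x})$ restricts to a nonzero functional on $\fh_2$, so $\rho_2(S^2)=\fh_2^*$.

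\emph{Stability under right-invariant fields.} It suffices to check the generators, since all three operators are derivations.
\begin{itemize}
\item $\partial_{{\mathbf x}_0}$ kills $\tau$ and the $B_j^*$ factors, and sends $V^*\to\C$. So it maps each product into $S_{\mathbf b}$.
\item $T=\partial_\tau$ kills all factors except $\tau$, and sends $\tau\mapsto1$. So it maps $\tau\prod\mu_j$ to $\prod\mu_j\in S_{\mathbf b}$.
\item $\sY_{{\mathbf y}_0}=\partial_{{\mathbf y}_0}-{\mathbf y}_0({\mathbf x})\partial_\tau$ is the only real check. Write ${\mathbf y}_0=\sum_j{\mathbf y}_{0,j}$ with ${\mathbf y}_{0,j}\in B_j$. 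On a factor $\mu_j$ it gives the constant $\mu_j({\mathbf y}_{0,j})$; the resulting product stays in $S_{\mathbf b}$, because dropping one factor of $\C\oplus B_j^*$ to a constant keeps us in the span. On the factor $\tau$ it gives $-{\mathbf y}_0({\mathbf x})$, which lies in $V^*$ as a function of ${\mathbf x}$. Thus $\tau\prod\mu_j\mapsto -{\mathbf y}_0({\mathbf x})\prod\mu_j+\tau\,\partial_{{\mathbf y}_0}(\prod\mu_j)$. This lies in $S_{\mathbf b}$ precisely because the first Segre factor contains all of $V^*$ as well as $\tau$.
\end{itemize}
This is the essential point of the construction, and it is the step I would write most carefully.

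\emph{Identification of $Z^{S_{\mathbf b}}$.} By construction, $\Phi_{S_{\mathbf b}}$ is the composition of $\Psi_{\mathbf b}$ with the Segre embedding. The space $S_{\mathbf b}$ is the full tensor product of the factor spaces: the products of monomials from distinct variable sets are linearly independent, so the dimension equals $(m+2)\prod_j(b_j+1)$. Hence $\BP S_{\mathbf b}^*$ is exactly the Segre ambient space. The map $\Psi_{\mathbf b}$ is an isomorphism onto the open product of standard affine charts $\A^{m+1}\times\prod_j\A^{b_j}$, since $({\mathbf x},{\mathbf y},\tau)$ are global coordinates. Its closure is therefore the whole product $\BP^{m+1}\times\prod_j\BP^{b_j}$, embedded by Segre.

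The main potential obstacle is the $\sY$-check above, namely making sure the sign and coordinate conventions for $\tau$ match the displayed vector fields. All remaining steps are bookkeeping.
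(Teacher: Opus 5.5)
Your proposal is correct and follows the paper's proof essentially step for step. You check that the right-invariant fields $\sX_{{\mathbf x}_0}$, $T$, $\sY_{{\mathbf y}_0}$ preserve each Segre factor space, the key point being that $-{\mathbf y}_0({\mathbf x})\partial_\tau$ sends $\tau$ into $V^*$; you read off $S^0$, $S^1$, $\rho_2(S^2)$ and the rank $r+2$ from the factor weights $0,1,2$ and $0,1$; and you identify $\Phi_{S_{\mathbf b}}$ with the Segre embedding composed with $\Psi_{\mathbf b}$. Your explicit check that $\dim S_{\mathbf b}=(m+2)\prod_j(b_j+1)$, so that $\BP S_{\mathbf b}^*$ is exactly the Segre ambient space, is a detail the paper leaves implicit.
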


\begin{proof}
The vector field $\sX_{{\mathbf x}_0}$ differentiates only the $V^*$-coordinates in the
first factor, while $T$ sends $\tau$ to $1$.  The vector field
$\sY_{{\mathbf y}_0}$ differentiates the $B_j^*$-coordinates in the factors
$\C\oplus B_j^*$, and its term $-{\mathbf y}_0({\mathbf x})\partial_\tau$ sends
$\C\tau$ into $V^*$.  Hence the right-invariant vector fields preserve
$S_{\mathbf b}$.  Moreover
$$
S_{\mathbf b}^0=\C,\qquad
S_{\mathbf b}^1=V^*\oplus B_1^*\oplus\cdots\oplus B_r^*
\cong V^*\oplus V=\fh(V)_1^*
$$
This gives the required degree-one component.  Since $T(\tau)=1$, the restriction map
$S_{\mathbf b}^2\to\fh(V)_2^*$ is surjective.  Thus $S_{\mathbf b}$ is a
contact symbol system.

The map $\Psi_{\mathbf b}$ identifies $\sH(V)$ with the product of the
standard affine charts in
$$
\BP(\C\oplus V\oplus\C)\times\prod_{j=1}^r\BP(\C\oplus B_j).
$$
The morphism $\Phi_{S_{\mathbf b}}$ in Definition \ref{d.Hsymmetric} is the composition of $\Psi_{\mathbf b}$
with the Segre embedding.  Taking closures gives
$$
Z^{S_{\mathbf b}}=
\BP(\C\oplus V\oplus\C)\times\prod_{j=1}^r\BP(\C\oplus B_j)
=\BP^{m+1}\times\BP^{b_1}\times\cdots\times\BP^{b_r}.
$$
The first factor contributes weights $0,1,2$, and each remaining factor
contributes weights $0,1$.  Hence the highest nonzero weighted degree is
$r+2$, realized by multiplying $\tau$ with one nonzero linear coordinate from
each $B_j^*$.  This proves ${\rm rank}(S_{\mathbf b})=r+2$.
\end{proof}

\section{Contact fundamental forms of adjoint varieties}\label{s.adjoint}

For the discussions in this section, we need to recall some results on graded
simple Lie algebras.  
The following two theorems are well-known (see \cite[Proposition 8.4.5]{Spr} and 
\cite[Sections 3 and 4.2]{Ya}).


\begin{theorem}\label{t.graded}
  Any  grading of a complex  simple Lie algebra $\fg$  is  of the form $$ \fg \ = \ \fg_{-d} \oplus \fg_{-d+1} \oplus \cdots \oplus \fg_{-1} \oplus \fg_0 \oplus \fg_1 \oplus \cdots \oplus \fg_{d -1} \oplus \fg_d$$ for some positive integer $d$ such that 
 writing $\fg_-:=\oplus_{i<0}\fg_i$ and $\fg_+:=\oplus_{i>0}\fg_i$, \begin{itemize} \item[(i)] the subalgebra $\fp_-:= \fg_0 \oplus \fg_-$ (resp. $\fp_+ := \fg_0 \oplus \fg_+$)  is a parabolic subalgebra of $\fg$, namely, the homogeneous space $G/P_-$ (resp. $G/P_+$) is projective for the Lie groups  $P_- \subset G$ (resp. $P_+ \subset G$) of the Lie algebras $\fp_- \subset \fg$ (resp. $\fp_+ \subset \fg$); and \item[(ii)] $\fp_-$ and $\fp_+$ are isomorphic Lie algebras.  \end{itemize} \end{theorem}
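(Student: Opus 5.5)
The plan is to realize the grading by a single semisimple element of $\fg$ and then read everything off from root data.

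\emph{Grading element.} Given a $\Z$-grading $\fg=\oplus_{i\in\Z}\fg_i$ with $[\fg_i,\fg_j]\subset\fg_{i+j}$, define $D:\fg\to\fg$ by $D(x)=ix$ for $x\in\fg_i$. The compatibility of the grading with the bracket says exactly that $D$ is a derivation. Since $\fg$ is simple, its Killing form is nondegenerate, so every derivation is inner. Hence $D={\rm ad}(e)$ for a unique $e\in\fg$. Because ${\rm ad}(e)$ is diagonalizable with integer eigenvalues, $e$ is semisimple.

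\emph{Root description.} Choose a Cartan subalgebra $\fh\ni e$ with root system $\Delta$. Every root $\alpha$ satisfies $\alpha(e)\in\Z$, and
$$\fg_0=\fh\oplus\bigoplus_{\alpha(e)=0}\fg_\alpha,\qquad \fg_i=\bigoplus_{\alpha(e)=i}\fg_\alpha\quad (i\neq0).$$
Choose a Weyl chamber whose closure contains $e$, that is, a system of positive roots $\Delta^+$ with $\alpha(e)\ge0$ for all $\alpha\in\Delta^+$. Let $\theta$ be the highest root and set $d:=\theta(e)$. Every root has the form $\theta-\sum(\text{simple roots})$, so every positive root $\alpha$ satisfies $\alpha(e)\le d$. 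Since $\Delta=-\Delta$, the nonzero degrees lie in $[-d,d]$. This gives the displayed form of the grading, with $d>0$ whenever the grading is nontrivial. Some intermediate $\fg_i$ may vanish; the statement does not exclude this.

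\emph{Part (i).} $\fp_+$ contains the Borel subalgebra $\fh\oplus\bigoplus_{\alpha\in\Delta^+}\fg_\alpha$, and $\fp_-$ contains the opposite Borel subalgebra. A subalgebra containing a Borel subalgebra is parabolic, so $G/P_\pm$ is projective (it is a quotient of the flag variety $G/B$).

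\emph{Part (ii).} Take a Chevalley involution $\omega$ of $\fg$ adapted to $\fh$. It acts as $-1$ on $\fh$ and maps $\fg_\alpha$ onto $\fg_{-\alpha}$, so $\omega(e)=-e$. Consequently $\omega$ maps $\fg_i$ onto $\fg_{-i}$ and restricts to a Lie algebra isomorphism $\fp_+\cong\fp_-$. The same argument gives $\dim\fg_i=\dim\fg_{-i}$.

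The only nontrivial input is the statement that all derivations of a semisimple Lie algebra are inner; the remaining steps are routine root-theoretic bookkeeping. The step needing most care is the choice of positive system in the \emph{closed} chamber containing $e$, which is possible because $e$ lies in $\fh$ and the closed Weyl chambers cover the real span of the coroots. That span contains $e$, since $e$ is an element of $\fh$ on which all roots take integer (hence real) values.
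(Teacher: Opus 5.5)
Your proof is correct. The paper gives no proof of its own and cites the statement as well known (Springer; Yamaguchi, Sections 3 and 4.2). Your argument is the standard one behind those references: the grading derivation is inner, the grading element lies in a Cartan subalgebra, you choose a positive system in a closed chamber containing it, and you use a Chevalley involution to get $\fp_+\cong\fp_-$.
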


\begin{theorem}\label{t.adjoint}
  For each simple Lie algebra $\fg$, there is a unique (up to isomorphism) graded Lie algebra structure of the form 
  $$\fg \ = \ \fg_{-2} \oplus \fg_{-1} \oplus \fg_0 \oplus \fg_1 \oplus  \fg_2$$ such that $\fg_-$ and $\fg_+$ are graded Heisenberg algebras. The  corresponding homogeneous space $G/P_+$ (in the notation of Theorem \ref{t.graded}), called the {\em adjoint variety} of $\fg$, has a natural $G$-invariant contact structure given by $\fg_{-1}$. Conversely, if a $G$-homogeneous projective variety has a $G$-invariant contact structure, then it is the adjoint variety of $\fg$. \end{theorem}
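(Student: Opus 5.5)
The statement has three parts: existence and uniqueness of the contact grading, the invariant contact structure on $G/P_+$, and the converse. I would prove each in turn using the root-theoretic description of the grading by the highest root. Fix a Cartan subalgebra and a positive system, let $\theta$ be the highest root with coroot $H_\theta$, and let $e_{\pm\theta}$ be root vectors.

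\emph{Existence.} Grade $\fg$ by the eigenvalues of ${\rm ad}\, H_\theta$. Since $\theta$ is the highest root, $\langle\alpha,\theta^\vee\rangle\in\{-1,0,1\}$ for every root $\alpha\neq\pm\theta$. Hence the grading has the shape $\fg_{-2}\oplus\cdots\oplus\fg_2$ with $\fg_{\pm2}=\C e_{\pm\theta}$. For a root $\alpha$ with $\langle\alpha,\theta^\vee\rangle=1$, the reflection gives that $\theta-\alpha$ is again a root of degree $1$, and $[e_\alpha,e_{\theta-\alpha}]$ is a nonzero multiple of $e_\theta$. So the pairing $\wedge^2\fg_1\to\fg_2$ is nondegenerate and $\fg_+$ is a graded Heisenberg algebra. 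The symmetric argument applies to $\fg_-$. Theorem \ref{t.graded} then supplies the parabolic statements.

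\emph{Uniqueness.} A grading of $\fg$ is a semisimple derivation, hence ${\rm ad}\,E$ for a semisimple $E\in\fg_0$. After conjugation, $E$ lies in the Cartan subalgebra and is dominant, so the top piece $\fg_2$ contains $e_\theta$; as $\fg_2$ is one-dimensional, $\fg_{\pm2}=\C e_{\pm\theta}$. I then show ${\rm ad}\,E={\rm ad}\,H_\theta$ degree by degree.
\begin{itemize}
\item[(a)] For a root $\alpha$ in $\fg_{-1}$, nondegeneracy of the Heisenberg bracket on $\fg_-$ gives a root $\beta$ in $\fg_{-1}$ with $\alpha+\beta=-\theta$. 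Then $\langle\alpha,\theta^\vee\rangle+\langle\beta,\theta^\vee\rangle=-2$, and both terms are $\geq -1$, so both equal $-1$. The same argument works for $\fg_1$.
\item[(b)] Suppose $\alpha$ is a root in $\fg_0$ with $\langle\alpha,\theta^\vee\rangle=1$. Then $\alpha-\theta$ is a root of $E$-degree $-2$, so it must equal $-\theta$, forcing $\alpha=0$, a contradiction. The case $\langle\alpha,\theta^\vee\rangle=-1$ is symmetric.
\end{itemize}
Thus $E=H_\theta$, and the grading is unique up to conjugation.

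\emph{Contact structure.} Identify $T_o(G/P_+)$ with $\fg/\fp_+\cong\fg_-$. The subspace $(\fg_{-1}\oplus\fp_+)/\fp_+$ is $P_+$-stable, because $[\fg_{i},\fg_{-1}]\subset\fg_{i-1}$ for $i\ge 0$. It therefore spreads to a $G$-invariant hyperplane distribution. Its Levi form at $o$ is the bracket $\wedge^2\fg_{-1}\to\fg_{-2}$, which is nondegenerate, so the distribution is a contact structure.

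\emph{Converse.} Let $G/P$ carry a $G$-invariant contact distribution $D$, and take the depth-$d$ grading of $\fg$ determined by $\fp=\fg^0$. Since $D_o\subset\fg/\fp$ is a $P$-stable hyperplane, the line $(\fg/\fp)/D_o$ is a one-dimensional $P$-module, so the unipotent radical acts trivially on it. From this I would deduce $D_o\supseteq(\fg^{-d+1})/\fp$ with codimension one, so $\dim\fg_{-d}=1$ and $D_o=\fg^{-d+1}/\fp$. The Levi form of $D$ at $o$ is induced by the bracket. If $d\ge3$, then $\fg_{-1}$ brackets into $\fg_{-2}\subset D_o$, so $\fg_{-1}$ lies in the kernel of the Levi form, contradicting nondegeneracy; hence $d=2$. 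Nondegeneracy then makes $\fg_-$ a graded Heisenberg algebra, and by uniqueness the grading is the adjoint one, so $G/P$ is the adjoint variety.

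\emph{Main obstacle.} The hardest step is the claim in the converse that $D_o$ must equal $\fg^{-d+1}/\fp$, i.e.\ classifying the $P$-stable hyperplanes of $\fg/\fp$. I would first try to do this using the fact that the $P$-submodules of $\fg/\fp$ are governed by the $\fg_0$-irreducible pieces of the filtration, which are generated by $\fg_{-1}$ (Yamaguchi, \cite{Ya}).
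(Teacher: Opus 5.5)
The paper gives no proof of this statement; it quotes it as well known from Springer and Yamaguchi. Your existence, uniqueness and contact-structure parts are therefore a self-contained replacement, and they are correct. In particular, the uniqueness argument is sound: you conjugate to a dominant grading element, force $\fg_{\pm2}=\C e_{\pm\theta}$, and compare degrees root by root in (a), (b).

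The converse, however, has a step that fails, in addition to the gap you flagged. To exclude $d\ge3$, you argue that $\fg_{-1}$ lies in the radical of the Levi form because $[\fg_{-1},\fg_{-1}]\subset\fg_{-2}\subset D_o$. But $X\in\fg_{-1}$ is in the radical only if $[X,D_o]\subset D_o$ modulo $\fp$. Now $D_o=\fg^{-d+1}/\fp$ contains $\fg_{-(d-1)}$, and $[\fg_{-1},\fg_{-(d-1)}]=\fg_{-d}\not\subset D_o$, because $\fg_-$ is generated by $\fg_{-1}$. For $d\ge3$ the Levi form pairs $\fg_{-i}$ with $\fg_{-(d-i)}$, and no degree count rules out that this pairing is perfect. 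Concretely, take $\fg=\mathfrak{sl}_4$ and $\fp$ a Borel subalgebra, so $d=3$. Then $e_{-\alpha_1}$ pairs nontrivially with $e_{-\alpha_2-\alpha_3}$. The form is degenerate, but its radical is $\C e_{-\alpha_2}$, not $\fg_{-1}$.

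There is also a gap in the step you flagged. Getting $D_o\supseteq\fg^{-d+1}/\fp$ needs $[\fg_1,\fg_{-j-1}]=\fg_{-j}$ for $1\le j\le d-1$. Even granting that, $\dim\fg_{-d}=1$ does not follow from counting codimensions. You only learn that the image of $D_o$ in $\fg/\fg^{-d+1}\cong\fg_{-d}$ is a $\fg_0$-stable hyperplane. To force that hyperplane to be zero, you need $\fg_{-d}$ to be an irreducible $\fg_0$-module.

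Both problems disappear if you use the Killing form $B$ instead of the grading attached to $\fp$.
\begin{itemize}
\item[(1)] Write $D_o=\fd/\fp$, with $\fd\supset\fp$ a $\fp$-stable hyperplane of $\fg$. Choose the Borel subalgebra $\mathfrak b\subset\fp$ used to define $\theta$.
\item[(2)] Then $\fd^\perp$ is a $\fp$-stable, hence $\mathfrak b$-stable, line. The derived algebra $[\mathfrak b,\mathfrak b]$ acts on it by zero, so it is spanned by a highest weight vector of the adjoint representation. Hence $\fd^\perp=\C e_\theta$ and $\fd=e_\theta^\perp$.
\item[(3)] Consequently $\fp$ stabilizes $\C e_\theta$, so $\fp\subset\fp_\theta:=\fg_0\oplus\fg_1\oplus\fg_2$, taken for the $H_\theta$-grading.
\item[(4)] Identify $\fg/\fd$ with $\C$ via $B(\cdot,e_\theta)$. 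The Levi form becomes $L(X,Y)=B([X,Y],e_\theta)=-B(Y,[X,e_\theta])$. Its radical in $\fd$ is $\{X\in\fd\mid [X,e_\theta]\in\C e_\theta\}=\fp_\theta$; note that $\fp_\theta\subset e_\theta^\perp$.
\item[(5)] Nondegeneracy on $\fd/\fp$ therefore forces $\fp=\fp_\theta$ directly.
\end{itemize}
This needs no case analysis on $d$, no irreducibility statements, and no appeal to uniqueness of the grading.
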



Recall the following from \cite{Be}.  

\begin{proposition}\label{p.Beauville}
For a simple Lie group $G$ and its algebra $\fg$, let  $\sO \subset \fg^*$ be a  coadjoint orbit of a nonzero nilpotent element. Then \begin{itemize} \item[(i)] its projectivization $M:= \BP \sO$ has a $G$-invariant contact structure $H \subset TM$ such that the line bundle $L = TM/H$ agrees with the restriction $\sL$ of the hyperplane line bundle on $\BP \fg^*$ and \item[(ii)] under the isomorphism $\fc \simeq H^0(M, L)$ from Theorem \ref{t.Ko}, the contactomorphic vector field associated with $A\in\fg$, 
is sent to the restriction to $M$ of the ambient linear function
$\xi\longmapsto \xi(A)$
on $\fg^*$, determining an inclusion corresponding to the linear system
$$\fg = H^0(\BP \fg^*, \sO(1)) \subset H^0(M, L) \cong \fc.$$  Hence a linear automorphism of $\BP \fg^*$ preserving $M \subset \BP \fg^*$ and the contact structure $H \subset TM$ preserves this copy of $\fg \subset \fc$. \end{itemize}
\end{proposition}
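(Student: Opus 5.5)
The plan is to construct the contact structure from the Kirillov--Kostant--Souriau symplectic form on the cone $\sO$. Then I identify the contact Hamiltonians of the fundamental vector fields with the moment map components.

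\textbf{Step 1 (contact structure).} Let $\omega$ be the KKS symplectic form on $\sO$, given by $\omega_\xi(\mathrm{ad}^*_A\xi,\mathrm{ad}^*_B\xi)=\xi([A,B])$. It is $G$-invariant. Since $\sO$ is nilpotent, it is stable under the scalar $\C^*$-action on $\fg^*$, and $\omega$ has weight one: $m_t^*\omega=t\,\omega$. Let $E$ be the Euler vector field, which is tangent to $\sO$. Then ${\rm Lie}_E\omega=\omega$. Put $\alpha:=\iota_E\omega$. Cartan's formula gives ${\rm d}\alpha=\omega$, and we also have $\alpha(E)=0$ and ${\rm Lie}_E\alpha=\alpha$.

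Hence $\alpha$ is horizontal for the $\C^*$-bundle $\sO\to M=\BP\sO$ and homogeneous of degree one. It therefore descends to a twisted $1$-form $\bar\alpha\in H^0(M,\Omega^1_M\otimes\sL)$, with $\sL=\sO(1)|_M$, and $\bar\alpha$ is nowhere vanishing. Set $H:=\ker\bar\alpha$. The standard symplectization argument shows that $H$ is a contact structure: ${\rm d}\alpha=\omega$ is nondegenerate on the cone, and this forces ${\rm d}\bar\alpha|_H$ to be nondegenerate. By construction $\bar\alpha$ induces an isomorphism $TM/H\cong\sL$, and everything is $G$-invariant because $\omega$ and $E$ are. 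This proves (i).

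\textbf{Step 2 (Hamiltonians).} For $A\in\fg$, let $\zeta_A$ be the fundamental vector field on $\sO$. Its Hamiltonian with respect to $\omega$ is the moment map component $\mu_A(\xi)=\xi(A)$, with the sign convention $\iota_{\zeta_A}\omega={\rm d}\mu_A$ (up to a global sign). Since $\mu_A$ is linear, we have
$$\alpha(\zeta_A)=\omega(E,\zeta_A)=-{\rm d}\mu_A(E)=-\mu_A.$$
The vector field $\zeta_A$ commutes with $E$, so it descends to a vector field $\bar\zeta_A$ on $M$. Because $G$ preserves $H$, this vector field is contactomorphic. Its image in $H^0(M,TM/H)=H^0(M,\sL)$ under the quotient map is $\bar\alpha(\bar\zeta_A)$. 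This is, up to the fixed sign, the section of $\sO(1)$ given by the linear function $\xi\mapsto\xi(A)$. Combined with Theorem \ref{t.Ko}, this gives (ii). The map $\fg\to H^0(M,\sL)$ is injective because $\fg$ is simple and acts nontrivially, and because $M$ is nondegenerate in $\BP\fg^*$: a linear function vanishing on $\sO$ vanishes on its span, which is $G$-stable and nonzero, hence all of $\fg^*$.

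\textbf{Step 3 (last assertion).} Let $\varphi$ be a linear automorphism of $\BP\fg^*$ preserving $M$ and $H$. Then $\varphi$ acts on $\fc$ by conjugation, and it acts on the line bundle in two ways. The first is via $TM/H$, using that $\varphi$ preserves $H$. The second is via the linear lift on $\sO(1)$.

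I expect the main point to be checking that these two actions are compatible. They differ by an automorphism of the line bundle $\sL$ on the compact connected $M$, that is, by a nonzero constant. So the isomorphism $\fc\cong H^0(M,\sL)$ of Theorem \ref{t.Ko} intertwines the two actions up to scalar. The linear automorphism clearly preserves the subspace of restrictions of linear forms, $\fg\subset H^0(M,\sL)$, so the corresponding copy of $\fg$ in $\fc$ is preserved.
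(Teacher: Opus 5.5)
Your Steps 1 and 2 are correct. They are essentially the arguments in Beauville that the paper simply cites for (i) and (ii): the symplectization $\alpha=\iota_E\omega$ of the Kirillov--Kostant--Souriau form, and the moment map components $\xi\mapsto\xi(A)$ as contact Hamiltonians of the fundamental vector fields. The paper gives no independent argument.

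The gap is in Step 3. To conclude that the two actions on $\sL$ differ by a nonzero constant, you use that $M$ is compact. But $M=\BP\sO$ is compact only when $\sO$ is the minimal orbit. For every other nonzero nilpotent orbit, $\BP\sO$ is a proper open subset of $\BP\overline{\sO}$; this is exactly the situation excluded in Theorem~\ref{t.nilpotent}. The point matters for the conclusion. If the ratio $f\in H^0(M,\sO_M^*)$ were nonconstant, then $\varphi_*$ would send the section $\xi\mapsto\xi(A)$ to $f$ times a restricted linear form, which need not lie in the copy of $\fg$. So what you actually need is $H^0(M,\sO_M^*)=\C^*$ for a possibly noncompact $M$.

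This statement is true, and it can be proved as follows.
\begin{itemize}
\item The boundary $\overline{\sO}\setminus\sO$ is a finite union of nilpotent orbits. Each is even-dimensional, since each carries its own KKS form, and each has dimension strictly less than $\dim\sO$. Hence $\BP(\overline{\sO}\setminus\sO)$ has codimension at least $2$ in $\BP\overline{\sO}$.
\item Since $M$ is smooth, the normalization $\widetilde Z\to\BP\overline{\sO}$ is an isomorphism over $M$. The normalization map is finite, so $M$ sits in $\widetilde Z$ with complement of codimension at least $2$.
\item The function $f$ is regular, because $\varphi$, $H$ and $\bar\alpha$ are all algebraic. By Hartogs extension on the normal variety $\widetilde Z$, $f$ extends to a regular function on the irreducible projective variety $\widetilde Z$, so it is constant. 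Alternatively, Riemann extension on normal complex spaces gives the same conclusion for holomorphic $f$.
\end{itemize}
With this replacing the appeal to compactness, the rest of your Step 3 goes through as written.
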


\begin{proof}
(i) is from \cite[Section (2.4)]{Be} and (ii) is from \cite[Section (1.5)]{Be}. \end{proof}

%
%

  We determine when the closure of $\BP \sO$ in $\BP \fg^*$ is a Heisenberg-symmetric variety as follows.

\begin{theorem}\label{t.nilpotent}
In Proposition \ref{p.Beauville}, let $Z \subset \BP \fg^*$ be the closure of $M= \BP \sO$.   Then $Z \subset \BP \fg^*$ is a  Heisenberg-symmetric variety if and only if $M =Z,$  namely, it is the adjoint variety of $\fg$ in Theorem \ref{t.adjoint}. \end{theorem}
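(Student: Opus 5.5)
We treat the two implications separately. In both we use the attracting-chart picture from the proof of Proposition \ref{p.unique}, together with the homogeneity furnished by Proposition \ref{p.Beauville}.

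\emph{The adjoint variety is Heisenberg-symmetric.} Let $Z = G/P_+ \subset \BP\fg^*$ be the adjoint variety, with the grading $\fg=\fg_{-2}\oplus\cdots\oplus\fg_2$ of Theorem \ref{t.adjoint}. Let $z$ be the base point fixed by $P_+$, and let $G_-=\exp(\fg_-)$, which is a Heisenberg group. The orbit $G_-\cdot z$ is the open Bruhat cell $M' \subset Z$, and $G_-$ acts on it simply transitively. The action preserves the embedding and the contact structure, which is given by $\fg_{-1}$.

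\begin{itemize}
\item By Theorem \ref{t.cCartan}, $S_{z'}$ is a contact symbol system at a general point $z'\in M'$.
\item Transport by $G_-$ gives graded Lie algebra isomorphisms $\fn_{z'}\cong\fn_z\cong\fg_-$ that carry $S_{z'}$ to $S_z$. So $S:=S_z$ is a contact symbol system on $\fg_-$, and condition (i) of Theorem \ref{t.cFH} holds at every point of $M'$.
\item For condition (ii) at $z$, we use the one-parameter subgroup generated by the grading element of $\fg_0$. Its weights on $T_zZ\cong\fg_-$ are $1$ on $\fg_{-1}$ and $2$ on $\fg_{-2}$, up to sign. Hence $z$ is an isolated fixed point, and the action on $H_z$ is by scalars. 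Conjugating by $G_-$ gives condition (ii) at every point of $M'$.
\item The embedding is nondegenerate because $\fg^*$ is irreducible.
\end{itemize}

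The uniqueness part of Theorem \ref{t.cFH} then gives $Z\cong Z^S$.

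\emph{Heisenberg-symmetric forces $M=Z$.} Suppose $Z=\overline{\BP\sO}$ is Heisenberg-symmetric. Then there is a Zariski-open $M'$ in the smooth locus of $Z$, carrying a contact structure, and for every $z\in M'$ a subgroup $\lambda:\C^*\to{\rm PGL}(\fg^*)$ as in condition (ii). As in the proof of Proposition \ref{p.unique}, all nonzero weights of $\lambda$ on $\fg^*$ relative to $z$ have the same sign. Consequently the affine chart $A_z\subset\BP\fg^*$ complementary to a $\lambda$-stable hyperplane is attracted to $z$:
$$\lim_{t\to0}\lambda(t)x=z \quad\mbox{for every } x\in A_z.$$

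Now let $Z_{\min}=\BP\sO_{\min}$ be the adjoint variety. It is the unique closed $G$-orbit in $\BP\fg^*$, and it lies in the closure $Z$ of every projectivized nonzero nilpotent orbit. Since $Z_{\min}$ is nondegenerate, it meets $A_z$. Assume that $\lambda$ preserves $Z_{\min}$. Then $Z_{\min}\cap A_z$ is a nonempty, $\lambda$-stable, closed subset of $A_z$, and every point of it flows to $z$. Therefore $z\in Z_{\min}$. Since $z\in M'$ was arbitrary, the dense open set $M'$ lies in the closed set $Z_{\min}$. This gives $Z=Z_{\min}$, and hence $\sO=\sO_{\min}$ and $M=Z$.

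\emph{Main obstacle.} The argument above needs $\lambda$ to preserve $Z_{\min}$. Our plan is to show that $\lambda$ normalizes the image of $G$ in ${\rm PGL}(\fg^*)$, so that it permutes $G$-orbits and must fix the unique closed orbit $Z_{\min}$.

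First attempt. We would invoke Proposition \ref{p.Beauville}(ii). This requires $\lambda$ to preserve Beauville's contact structure on $\BP\sO$. The contact structure given by the Heisenberg-symmetric structure, however, is a priori a different one on $M'$. We would try to identify the two by comparing the linear systems $\fg\subset H^0(M,L)$ on the common dense open set.

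Fallback. We would use that the identity component of the group of linear automorphisms of $\BP\fg^*$ preserving $Z$ normalizes $G$. Linear automorphisms preserving $Z$ preserve its singular locus and its stratification, and a connected group cannot permute the finitely many orbit closures in $Z$ nontrivially, so $\lambda$ stabilizes each closed stratum and in particular the minimal one. This reduces the question to showing that $Z_{\min}$ is a union of such strata, or is otherwise intrinsically determined by the pair $Z\subset\BP\fg^*$.
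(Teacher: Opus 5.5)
\textbf{There is a genuine gap in the converse.}

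Your forward direction is fine and is essentially the paper's argument: $\exp(\fg_-)$ acts simply transitively on the big cell, the grading element gives the Heisenberg dilation, and homogeneity does the rest. Your reduction in the converse is also sound. If the dilation $\lambda$ at $z\in M'$ preserves $Z_{\min}$, then attraction of $A_z$ to $z$ gives $z\in Z_{\min}$, and hence $Z=Z_{\min}$.

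The problem is that this hypothesis carries the whole content of the converse, and you leave it unproved. Since $A_z$ meets every $G$-orbit in $Z$, the attraction argument gives nothing without it. Your fallback does not close the gap. It relies on the claim that the identity component of the linear automorphism group of $Z$ normalizes $G$, which you do not prove and which is false in general. For $\fg=\mathfrak{sp}_{2n}$ and $\sO=\sO_{\min}$, $Z=v_2(\BP^{2n-1})$ has linear automorphism group $PGL_{2n}$, which does not normalize $PSp_{2n}$. Even granting the claim, you are left with an unspecified ``intrinsic characterization'' of $Z_{\min}$.

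\textbf{The missing ingredient} is the Lie-algebraic input of Proposition \ref{p.Beauville}(ii) together with Theorem \ref{t.Ko}. Your worry about two different contact structures is settled by the convention the paper uses. As in Theorem \ref{t.cFH}, the contact structure is part of the data, so the Heisenberg orbit is a contact open subset of $M$ carrying Beauville's $H$. With that convention, your first attempt can be completed as follows.
\begin{itemize}
\item Units on $\sH\cong\A^{2m+1}$ are constant, so the identification $TM/H\cong\sL$ on $\sH$ is $\lambda$-equivariant up to a scalar.
\item By Lemma \ref{l.Lie}(i), $\lambda_*$ is a Lie algebra automorphism of $\fc(\sH)$. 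It preserves the image of the ambient linear system $\fg$ in $\fc(\sH)$, so it restricts to an automorphism $\alpha$ of $\fg$.
\item The linear lift of $\lambda$ to $\fg^*$ is then, up to scalar, the contragredient of $\alpha$.
\item This map permutes the nilpotent coadjoint orbits and preserves their dimensions, so it fixes $\sO_{\min}$. Hence $\lambda$ preserves $Z_{\min}$.
\end{itemize}

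\textbf{How the paper avoids this step.} The paper uses the same identification $\kappa:S\to\fg$ but never needs $\lambda$ or $Z_{\min}$. Since $\{F,G\}\in S^{p+q-2}$, the weighted grading of $S$ becomes a Lie algebra grading of $\fg$. Theorem \ref{t.graded} and Yamaguchi's lemma identify this as the contact grading. Lemma \ref{l.Lie}(ii) then gives isotropy algebra $\fp_+$ at $z$, so $M=G\cdot z\cong G/P_+$ is projective and equals $Z$.
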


We use the following lemma.

\begin{lemma}\label{l.Lie}
Let $(M,H\subset TM)$ be a contact manifold, and let
$\fc$ be the Lie algebra of all contactomorphic  vector fields.
\begin{itemize}
\item[(i)] If $\psi:M\to M$ is a contact automorphism, then
$\psi_*:H^0(M,TM)\to H^0(M,TM)$ induces a Lie algebra automorphism of
$\fc$.
\item[(ii)] Under the isomorphism $\fc \simeq H^0(M,TM/H)$ in Theorem \ref{t.Ko}, the
subalgebra of contactomorphic vector fields vanishing at $z\in M$ corresponds to
$H^0(M,(TM/H)\otimes\mathbf m_z^2),$
where $\mathbf m_z^2$ is the weighted ideal from Definition \ref{d.CF}.
\end{itemize}
\end{lemma}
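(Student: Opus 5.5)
Part (i) is formal, and I would argue it as follows. If $\psi$ is a contact automorphism, then $\psi_*$ is a Lie algebra automorphism of $H^0(M,TM)$, since pushforward by a biholomorphism commutes with brackets of vector fields. It remains to check that $\psi_*$ preserves $\fc$. Take $\xi\in\fc$ with local flow $\phi_s$. The flow of $\psi_*\xi$ is $\psi\circ\phi_s\circ\psi^{-1}$, and this preserves $H$ because each of the three factors does. So $\psi_*\fc\subset\fc$. Applying the same argument to $\psi^{-1}$ gives equality.

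For (ii), I would work locally and use the explicit description of Proposition \ref{p.Heis}. By the Darboux theorem, identify a neighborhood of $z$ contactomorphically with a neighborhood of $0\in\sH$, with $z\mapsto 0$. Trivialize $TM/H$ by $\theta$. Under this trivialization, Theorem \ref{t.Ko} sends a contactomorphic field $\xi$ to the function $F=\theta(\xi)$, and conversely $\xi=\overrightarrow{F}$. The claim then reduces to the following statement: $\overrightarrow{F}$ vanishes at $0$ if and only if $F\in\bm_0^2$, that is, $F(0)=0$ and all first-order left-invariant derivatives $X_aF(0)$, $Y_aF(0)$ vanish. (The operator $T$ has weighted order $2$, so $TF(0)$ imposes no condition.)

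Reading off the formula in Proposition \ref{p.Heis} at the origin $x=y=t=0$, the components of $\overrightarrow{F}(0)$ are $-\tfrac12\,\p F/\p y_a(0)$, $\tfrac12\,\p F/\p x_a(0)$ and $-\tfrac12 F(0)$. Hence $\overrightarrow{F}(0)=0$ if and only if $F(0)=\p_{x_a}F(0)=\p_{y_a}F(0)=0$ for all $a$. By Lemma \ref{l.coordi}(ii), $X_a$ and $\p_{x_a}$ agree at $0$, and likewise $Y_a$ and $\p_{y_a}$. So this condition is exactly that $F$ has weighted vanishing order $\geq 2$ at $0$. Here I use that weighted vanishing order can be tested with the frame $X_a,Y_a,T$: a weighted-order $\leq 1$ operator is a combination of a function times $1$ and sections of $H$, and $H$ is spanned by the $X_a,Y_a$.

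The one point needing care is independence of choices. Changing the trivialization of $TM/H$ multiplies $F$ by a unit, and $\bm_z^2$ is an ideal, so the condition is unchanged. Changing the Darboux chart is a contactomorphism, which preserves both the vanishing of the vector field and the weighted filtration, since that filtration is defined intrinsically from $H$. Globalizing is immediate: a global section of $TM/H$ lies in $H^0(M,(TM/H)\otimes\bm_z^2)$ exactly when its germ at $z$ does. I expect no real obstacle; the only thing to get right is the coefficient bookkeeping at $0$ in the formula for $\overrightarrow{F}$.
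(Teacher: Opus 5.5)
Your proposal is correct and follows the paper's own argument. Part (i) is the formal observation that conjugating flows by $\psi$ preserves $H$. Part (ii) reduces via a Darboux chart to the Heisenberg model, where reading off $\overrightarrow{F}(0)$ from Proposition \ref{p.Heis} shows that it vanishes exactly when $F(0)=X_aF(0)=Y_aF(0)=0$, i.e.\ $F\in\bm_0^2$. You also supply the coefficient check and the independence-of-trivialization remark that the paper's one-line proof leaves implicit.
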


\begin{proof}
Part (i) is immediate.  For (ii), the assertion is local at $z$.  In Darboux
coordinates, equivalently in the Heisenberg model of Proposition \ref{p.Heis},
the vector field $\overrightarrow{F}$ corresponding to a local holomorphic function $F$ vanishes at the origin exactly
when the weighted
vanishing order of $F$ is at least $2$.
\end{proof}

\begin{proof}[Proof of Theorem \ref{t.nilpotent}]
Suppose first that $M=Z$ is the adjoint variety from Theorem \ref{t.adjoint}.  For its contact
grading
$$
\fg=\fg_{-2}\oplus\fg_{-1}\oplus\fg_0\oplus\fg_1\oplus\fg_2,
$$
the nilpotent group $\exp(\fg_-)$ acts simply transitively on a big
cell, where $\fg_-=\fg_{-2}\oplus\fg_{-1}$ is a Heisenberg algebra and the
contact distribution is the left-invariant distribution determined by
$\fg_{-1}$.  The grading element gives the Heisenberg dilation, with weights
$1$ on $\fg_{-1}$ and $2$ on $\fg_{-2}$.  At the base point this gives the
required $\C^*$-action; the fixed line is isolated because the extremal space
$\fg_2$ is one-dimensional.  Conjugating by $G$ gives such an action at
every point, while $G$-homogeneity identifies all contact fundamental systems.
Thus $Z$ is Heisenberg-symmetric.

Conversely, suppose that $Z\subset\BP\fg^*$ is Heisenberg-symmetric.  Recall
first that $Z$ is nondegenerate in $\BP\fg^*$. By the definition of a Heisenberg-symmetric embedding, there are a contact
symbol system
$$
S=\bigoplus_{k\geq0}S^k\subset U(\fh)^*
$$
and a projective linear identification of $Z\subset\BP\fg^*$ with
$Z^S\subset\BP S^*$, under which the Heisenberg open orbit $\sH$ is a contact
open subset of $M$.  We use this identification and take its identity
$z\in\sH$.  The embedding of $\sH$ is the map $\Phi_S$ of Definition
\ref{d.Hsymmetric}; consequently its restricted ambient linear system is
precisely the graded system $S$.  On the other hand, Proposition \ref{p.Beauville} (ii) identifies the ambient linear system with $\fg$
through Theorem \ref{t.Ko}.  We therefore have a vector-space isomorphism
$
\kappa:S\longrightarrow\fg.
$

For $F,G\in S$, define the bracket $\{F, G \}$ by
$
\overrightarrow{\{F,G\}}=[\overrightarrow F,\overrightarrow G].
$
The right-hand side corresponds to an element of $\fg$ and hence $\{F, G \}$ is another
element of $S$.  If $F\in S^p$ and $G\in S^q$, Proposition \ref{p.Heis} shows
that $\overrightarrow F$ and $\overrightarrow G$ have weighted degrees $p-2$
and $q-2$, respectively.  It follows that
$$
\{F,G\}\in S^{p+q-2},
\qquad
\kappa(\{F,G\})=[\kappa(F),\kappa(G)].
$$
For $k\geq0$, define $\fg_{k-2}:=\kappa(S^k).$
Then $[\fg_i,\fg_j]\subset\fg_{i+j}$, so this is a Lie algebra grading of
$\fg$. Since $\fg_{-3} =0$, it must be of the form 
\begin{equation}\label{eq.contact}
\fg=\fg_{-2}\oplus\fg_{-1}\oplus\fg_0\oplus\fg_1\oplus\fg_2 \end{equation} from Theorem \ref{t.graded}. 

We claim that $[\fg_{-1}, \fg_{-1}] \neq 0$. Otherwise $[\fg_1, \fg_1] =0$ by Theorem \ref{t.graded} and  $\fg_{-1} \oplus \fg_0 \oplus \fg_1$ is an ideal of $\fg$, a contradiction to the simplicity of $\fg$. 
 Since $\dim\fg_{-2}=1$ because $S^0=\C$, we see by \cite[Lemma 2.1]{Ya} that (\ref{eq.contact}) agrees with the graded Lie algebra in Theorem \ref{t.adjoint}.

%

Finally, Lemma \ref{l.Lie} (ii) says that the isotropy subalgebra at $z$ is
$$
\fg_z=\fg_0\oplus\fg_1\oplus\fg_2=\fp_+.
$$
By Theorem \ref{t.adjoint},  the stabilizer of $z$ is $P_+$ and
 $M\simeq G/P_+$ is projective.  As $M$ is dense in $Z$, it follows
that $M=Z$, and it is the adjoint variety of $\fg$.
\end{proof}

Note that the argument in the above proof of Theorem \ref{t.nilpotent} gives the following information on the contact fundamental forms of adjoint varieties.

\begin{lemma}\label{l.adjoint}
Let $\fg=\fg_{-2}\oplus\fg_{-1}\oplus\fg_0\oplus\fg_1\oplus\fg_2$
be the contact grading of Theorem \ref{t.adjoint}, and let $Z\subset\BP\fg^*$
be the adjoint variety with base point $z$ whose isotropy subalgebra is
$\fg_0\oplus\fg_1\oplus\fg_2$.  Then the isomorphism in Theorem \ref{t.Ko} 
induces natural graded vector-space isomorphisms
$S_z^k\simeq \fg_{k-2}$ for all $k \geq 0$.
\end{lemma}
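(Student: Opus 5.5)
We follow the argument in the proof of Theorem \ref{t.nilpotent}, now applied to the adjoint variety itself. By $G$-homogeneity it suffices to work at a single base point $z$ whose isotropy subalgebra is $\fp_+=\fg_0\oplus\fg_1\oplus\fg_2$. The open cell $\exp(\fg_-)\cdot z$ is a Heisenberg group $\sH$ with Lie algebra $\fg_-$. The contact structure on it is the left-invariant distribution given by $\fg_{-1}$. The symbol algebra $\fn_z$ is identified with $\fg_-$, with $\fn_{z,1}=\fg_{-1}$ and $\fn_{z,2}=\fg_{-2}$. The grading element $E\in\fg_0$ acts as the Heisenberg dilation. Hence the ambient linear system $W=\fg$ (Proposition \ref{p.Beauville}), pulled back to $\sH$ by the trivialization $\ell_0$ and divided by it, splits into weighted-homogeneous polynomial pieces. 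This is the argument of Proposition \ref{p.unique} and Proposition \ref{p.Hsymmetric}(ii).

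The first step is to read off the weighted degree of each element of $S_z$ from its eigenvalue under $E$. Then:
\begin{itemize}
\item[(a)] Each space $\fg_{k-2}$ maps to a space of weighted-homogeneous polynomials on $\sH$ of degree $k$.
\item[(b)] Under the trivialization $\ell_0$ (a lowest-weight vector, evaluating at the $\fg_{2}$-direction dual line), the restriction map identifies $\fg_{k-2}$ with the $E$-eigenspace of the correct weight inside the polynomial functions.
\item[(c)] These eigenspaces are exactly the weighted-graded pieces, so the weighted initial form map $W^k_z/W^{k+1}_z\to U_k(\fn_z)^*$ restricts to an isomorphism onto $S^k_z$.
\end{itemize}
Composing with Theorem \ref{t.Ko} and Proposition \ref{p.Beauville}(ii) gives $S^k_z\simeq\fg_{k-2}$.

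Alternatively, one can bypass (b) using Lemma \ref{l.Lie}(ii) and its higher-order analogue. A contact vector field $\overrightarrow{F}$ in the Heisenberg model has weighted vanishing order determined by the weighted vanishing order of $F$. By Proposition \ref{p.Heis}, a weighted-homogeneous $F$ of degree $k$ gives a field homogeneous of weighted degree $k-2$, so it is an $E$-eigenvector with eigenvalue $k-2$ for the bracket with $\overrightarrow{E}$. Concretely, $\overrightarrow{E}$ is the Euler field of the dilation, which is (up to sign) $\overrightarrow{2t}$. The bracket $[\overrightarrow{E},\overrightarrow{F}]=(k-2)\overrightarrow{F}$ then follows from the homogeneity formula in Proposition \ref{p.Heis}. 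Therefore the filtration $W^k_z$ of $\fg$ coincides with $\bigoplus_{j\ge k-2}\fg_j$, and the graded pieces are $\fg_{k-2}$. Lemma \ref{l.cap} and Definition \ref{d.CF} then identify these with $S^k_z$. Naturality holds because every map used is canonical once the grading element of $z$ is fixed.

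The main obstacle is verifying that the vanishing filtration of $\fg\subset H^0(M,L)$ at $z$ is exactly the grading filtration. This requires two inclusions:
\begin{itemize}
\item[(1)] The element $E$ lies in $\fg$ and acts with eigenvalue $k-2$ on contact fields homogeneous of weighted degree $k$. This is a sign/normalization check in Proposition \ref{p.Heis}.
\item[(2)] The restriction of each $A\in\fg$ to the chart $\sH$ is a polynomial, so it decomposes into $E$-eigenvectors, all of them in $\fg$. This holds because $\fg$ is $E$-stable and finite-dimensional, and the pieces are weighted-homogeneous polynomials exactly as in the proof of Proposition \ref{p.unique}.
\end{itemize}
Once both are settled, the graded isomorphism is immediate.
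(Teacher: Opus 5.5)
Your argument is correct, but it runs in the opposite direction from the paper's. The paper gets the lemma as a by-product of the converse half of the proof of Theorem \ref{t.nilpotent}:
\begin{itemize}
\item it starts from the graded linear system $S=\oplus S^k$ on the Heisenberg cell and transports the grading to $\fg$ via $\kappa$;
\item it shows $\fg_{k-2}:=\kappa(S^k)$ is a Lie algebra grading, using the weight count $\{F,G\}\in S^{p+q-2}$ for the contact Hamiltonians of Proposition \ref{p.Heis};
\item it identifies this grading with the contact grading via Theorem \ref{t.graded}, simplicity, $\dim\fg_{-2}=1$ and Yamaguchi's Lemma 2.1;
\item it locates the isotropy at $z$ by Lemma \ref{l.Lie}(ii).
\end{itemize}

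You instead start from the prescribed grading element $E$. It generates the Heisenberg dilation of the cell $\exp(\fg_-)\cdot z$, and from this you read off that $A\in\fg_j$ restricts to a weighted-homogeneous function of degree $j+2$. This can be made completely explicit. Write $z=[\xi_0]$ and take $\ell_0\in\fg_{-2}$, which is central in $\fg_-$. Then the restricted function is $\xi_0(e^{-\mathrm{ad}X}A)/\xi_0(\ell_0)$, whose numerator only collects words in $\mathrm{ad}X_{-1},\mathrm{ad}X_{-2}$ of total weight $j+2$. This yields $W^k_z=\oplus_{j\ge k-2}\fg_j$ on the nose, for the given grading, with no classification input. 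It also makes the naturality transparent: the weighted vanishing filtration is the parabolic filtration. The paper's route buys economy instead, since that argument is needed anyway in Theorem \ref{t.nilpotent}, where no grading element is available a priori.

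Three minor points need fixing.
\begin{itemize}
\item The Euler field of the dilation is $\overrightarrow{-4t}$, not $\pm\overrightarrow{2t}$. What you actually need is that the contact field of $E$ is $\pm$ the Euler field, which holds because $\exp(sE)$ acts as $\delta_{e^{-s}}$.
\item In the alternative argument, the choice between $k=j+2$ and $k=2-j$ is fixed because $\fg_{-2}$ is exactly the part not vanishing at $z$ (equivalently, by Lemma \ref{l.Lie}(ii)).
\item In (b), you mean the $E$-eigenspace inside the image of $\fg$, not inside all polynomial functions.
\end{itemize}
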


\begin{remark}
A more explicit description of the contact fundamental form of the adjoint
variety can be read from the expression of the rational map $\phi$ in
\cite[page 140]{LM02}.
\end{remark}

To prove Theorem \ref{t.cLM},  we need to recall the notion of the universal prolongation from
\cite[Section 2.3]{Ya}.

\begin{definition}\label{d.prolong}
Let $\fn_-=\fn_{-1}\oplus\cdots\oplus\fn_{-d}$
be a negatively graded nilpotent Lie algebra, and let
$\fn_0\subset{\rm Der}_{\rm gr}(\fn_-)$ be a Lie subalgebra such that
$\fn_-\oplus\fn_0$ is a graded Lie algebra.  For $i>0$, after the spaces
$\fn_j$ for $j<i$ and their brackets with $\fn_-$ have been constructed,
define $\fn_i$ to be the space of degree-$i$ linear maps
$$
\phi:\fn_-\longrightarrow\bigoplus_{j<i}\fn_j,\qquad
\phi(\fn_{-k})\subset\fn_{i-k},
$$
satisfying
$$
\phi([u,v])=[\phi(u),v]+[u,\phi(v)]\qquad (u,v\in\fn_-).
$$
Here brackets involving $\fn_0$ and the previously constructed positive
components are the recursively defined evaluation brackets on $\fn_-$.  The
graded Lie algebra
$$
\operatorname{prol}(\fn_-,\fn_0):=\bigoplus_{i\ge-d}\fn_i
$$
is called the {\em universal prolongation} of $(\fn_-,\fn_0)$.
\end{definition}

\begin{lemma}\label{l.universal}
Let $\mathfrak q$ be a Lie algebra containing vector subspaces $V_i$, $i\ge-d$.
Assume that
\begin{itemize}
\item[(i)] $V_-\oplus V_0$, where $V_-:=\oplus_{i=-d}^{-1}V_i$, is a graded
Lie subalgebra isomorphic to $\fn_-\oplus\fn_0$;
\item[(ii)] $[V_i,V_{-k}]\subset V_{i-k}$ for $i\ge1$ and $1\le k\le d$,
where $V_r=0$ for $r<-d$; and
\item[(iii)] $\{X\in V_i\mid[X,V_-]=0\}=0$ for each $i\ge1$.
\end{itemize} Notice that the
spaces $V_i$ are not required to be closed under brackets with one another.
Then there are injective linear maps
$$
\jmath_i : \ V_i\hookrightarrow\operatorname{prol}(\fn_-,\fn_0)_i = \fn_i \qquad(i\ge1).
$$
\end{lemma}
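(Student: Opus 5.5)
We construct the maps $\jmath_i$ recursively in $i\geq 1$, using the recursive definition of the universal prolongation in Definition \ref{d.prolong}. Fix the isomorphism $\beta: V_-\oplus V_0\cong \fn_-\oplus\fn_0$ of graded Lie algebras from (i), and set $\jmath_i:=\beta|_{V_i}$ for $-d\leq i\leq 0$. Suppose that, for some $i\geq1$, injective linear maps $\jmath_j:V_j\to\fn_j$ have been constructed for all $j<i$. Suppose moreover that they are compatible with brackets against $V_-$, in the sense that
$$
\jmath_{j-k}([X,u])=[\jmath_j(X),\beta(u)]\qquad (X\in V_j,\ u\in V_{-k}),
$$
where the right-hand side is the evaluation bracket in $\operatorname{prol}(\fn_-,\fn_0)$. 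For $-d\le j\le 0$ this compatibility is just the statement that $\beta$ is a Lie algebra homomorphism. For $X\in V_i$ we define
$$
\phi_X:\fn_-\to\bigoplus_{j<i}\fn_j,\qquad \phi_X(\beta(u)):=\jmath_{i-k}([X,u])\quad(u\in V_{-k}).
$$
This makes sense because $[X,u]\in V_{i-k}$ by (ii), and $i-k<i$, so $\jmath_{i-k}$ is already defined. The map $\phi_X$ is linear, has degree $i$, and sends $\fn_{-k}$ into $\fn_{i-k}$.

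The key step is to check that $\phi_X$ is a derivation in the sense of Definition \ref{d.prolong}. For $u\in V_{-k}$ and $v\in V_{-l}$, the Jacobi identity in $\mathfrak q$ gives
$$
[X,[u,v]]=[[X,u],v]+[u,[X,v]].
$$
Apply $\jmath_{i-k-l}$ to this identity. On the left we get $\phi_X(\beta([u,v]))=\phi_X([\beta u,\beta v])$, using that $\beta$ is a homomorphism on $V_-$. On the right, $[X,u]$ lies in $V_{i-k}$ with $i-k<i$, so the compatibility hypothesis turns $\jmath([[X,u],v])$ into $[\jmath_{i-k}([X,u]),\beta v]=[\phi_X(\beta u),\beta v]$. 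The second term is handled in the same way, using antisymmetry. Together these give exactly the derivation identity. Hence $\phi_X\in\fn_i$, and we set $\jmath_i(X):=\phi_X$.

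By construction, the evaluation bracket in the prolongation satisfies $[\phi_X,\beta(u)]=\phi_X(\beta u)=\jmath_{i-k}([X,u])$. This is precisely the compatibility needed to continue the induction. Note that only brackets of the form $[V_j,V_-]$ enter, which is why no closure of the $V_i$ under brackets with one another is needed.

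It remains to show that $\jmath_i$ is injective, and here the point is to use injectivity of all the lower $\jmath_{i-k}$ rather than only $\jmath_0$. If $\phi_X=0$, then $\jmath_{i-k}([X,u])=0$ for every $u\in V_-$. Each $\jmath_{i-k}$ is injective, by induction for $i-k\ge1$ and because $\beta$ is an isomorphism for $i-k\le0$. Therefore $[X,V_-]=0$, and (iii) gives $X=0$.

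The main subtlety I expect is sign and convention bookkeeping in the derivation check. One has to make sure that the evaluation bracket $[\phi,u]=\phi(u)$ in Definition \ref{d.prolong} matches $[X,u]$ in $\mathfrak q$, rather than $[u,X]$, and that brackets involving $\fn_0$ agree with those coming from $V_0$. With consistent conventions, the induction closes.
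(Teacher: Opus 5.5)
Your proposal is correct and follows essentially the same route as the paper: define $\phi_X(u):=\jmath_{i-k}([X,u])$ for $u\in V_{-k}$, verify the derivation identity from the Jacobi identity in $\mathfrak q$ together with the induction hypothesis, and get injectivity from (iii). You also make explicit two points the paper leaves implicit. The first is the bracket-compatibility $\jmath_{j-k}([X,u])=[\jmath_j(X),\beta(u)]$ carried through the induction. The second is the use of injectivity of the lower maps $\jmath_{i-k}$ to pass from $\phi_X=0$ to $[X,V_-]=0$.
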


\begin{proof}
Identify $V_-\oplus V_0$ with $\fn_-\oplus\fn_0$, and let $\jmath_r: V_r \to \fn_i$  denote
this identification for $-d\le r\le0$.  We construct $\jmath_i: V_i \to \fn_i$ for $i>0$ inductively.  Assume that the
maps $\jmath_r$ have been constructed for $r<i$.  For $X\in V_i$, define
$$
\phi_X(u):=\jmath_{i-k}([X,u])\qquad(u\in V_{-k}).
$$
By condition (ii),  this is a degree-$i$ map to $\oplus_{r<i} \fn_r$.  The Jacobi identity in $\mathfrak q$, together with the induction
hypothesis, gives
$$
\phi_X([u,v])=[\phi_X(u),v]+[u,\phi_X(v)],
$$
so $\phi_X$ lies in $\fn_i$.  Set
$\jmath_i(X):=\phi_X$.  If $\phi_X=0$, then $[X,V_-]=0$, hence $X=0$ by
(iii).  This proves injectivity and completes the induction.  
\end{proof}

The following is a special case of Yamaguchi's prolongation theorem \cite[Theorem 5.2]{Ya}.

\begin{theorem}\label{t.Yamaguchi}
Let $\fg=\fg_{-2}\oplus\fg_{-1}\oplus\fg_0\oplus\fg_1\oplus\fg_2$
be the grading of a complex simple Lie algebra from Theorem \ref{t.adjoint}.  Unless $\fg$ is the symplectic Lie algebra, the
universal prolongation of $(\fg_-,\fg_0)$ is isomorphic to $\fg$ as a graded
Lie algebra.  
\end{theorem}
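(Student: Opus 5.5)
The plan is to follow Tanaka's approach, with Kostant's theorem as the computational input: first embed $\fg$ into the universal prolongation, and then show that the embedding is surjective by identifying the obstruction with positive-degree Lie algebra cohomology $H^1(\fg_-,\fg)$.

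\emph{Step 1: $\fg$ sits inside the prolongation.} Apply Lemma \ref{l.universal} with $\mathfrak q=\fg$ and $V_i=\fg_i$. Conditions (i) and (ii) are immediate from the grading. For (iii), take $X\in\fg_i$ with $i\geq1$ and $[X,\fg_-]=0$. The subspace spanned by the iterated brackets $[\fg_+,[\fg_+,\ldots,X]]$ is then stable under $\fg_+$, $\fg_0$ and $\fg_-$, so it generates a proper nonzero ideal of $\fg$. Simplicity forces $X=0$. This gives injective graded maps $\jmath_i:\fg_i\hookrightarrow\operatorname{prol}(\fg_-,\fg_0)_i$ for all $i\geq1$. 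One checks from the recursive definition that these maps are compatible with brackets against $\fg_-$, so $\fg$ becomes a graded subalgebra of $\operatorname{prol}(\fg_-,\fg_0)$.

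\emph{Step 2: reduction to cohomology.} Argue by induction on $p\geq1$ that $\operatorname{prol}_p=\fg_p$. Suppose $\operatorname{prol}_j=\fg_j$ for all $j<p$.
\begin{itemize}
\item An element $\phi\in\operatorname{prol}_p$ is a degree-$p$ linear map $\fg_-\to\fg$ satisfying the derivation rule, that is, a degree-$p$ $1$-cocycle in the Chevalley--Eilenberg complex $C^\bullet(\fg_-,\fg)$.
\item The coboundaries of degree $p$ are exactly the maps $\operatorname{ad}(Y)|_{\fg_-}$ with $Y\in\fg_p$, and these are the images of $\jmath_p$.
\item Hence $\operatorname{prol}_p/\fg_p\cong H^1(\fg_-,\fg)_p$. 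For $p\geq3$ this says $\operatorname{prol}_p\cong H^1(\fg_-,\fg)_p$, because $\fg_p=0$.
\end{itemize}
It therefore suffices to prove that $H^1(\fg_-,\fg)_p=0$ for every $p\geq1$ when $\fg$ is not symplectic. One should also check that the induction is legitimate, namely that once $\operatorname{prol}_p=0$ for a single $p\geq3$ all higher components vanish. This follows from condition (iii) and from the fact that $\fg_-$ is generated by $\fg_{-1}$.

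\emph{Step 3: Kostant's theorem.} This is the main computation and the main obstacle. By Kostant's Bott--Borel--Weil-type theorem, $H^1(\fg_-,\fg)$ decomposes as a $\fg_0$-module into irreducible pieces indexed by the simple roots $\alpha_i$ crossed by the parabolic, that is, those with $\langle\theta,\alpha_i^\vee\rangle\neq0$, where $\theta$ is the highest root. The piece attached to $\alpha_i$ has extremal weight $s_i(\theta+\rho)-\rho=\theta-(\langle\theta,\alpha_i^\vee\rangle+1)\alpha_i$. Its degree is read off by applying the grading element $E$, which satisfies $E(\theta)=2$ and $E(\alpha_i)=1$ for each crossed node.
\begin{itemize}
\item For $\fg$ not of type $C$ (including type $A_n$ with $n\geq2$, which has two crossed nodes), every crossed node has $\langle\theta,\alpha_i^\vee\rangle=1$. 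The degree is then $2-2=0$, so no positive-degree cohomology occurs.
\item In type $C_n$ (and $A_1=C_1$) one has $\langle\theta,\alpha_1^\vee\rangle=2$, and the resulting degree is nonzero. Indeed, there the prolongation is the infinite-dimensional contact algebra of polynomial functions, which is why symplectic $\fg$ is excluded.
\end{itemize}
The real care lies in fixing the sign and degree conventions in Kostant's theorem. One must decide whether the degree is measured by the value weight or by the weight of a cochain in $\fg_-^*\otimes\fg$. I would pin this down first by testing the formula on the known case $C_n$, where positive-degree cohomology must appear. Once that is calibrated, the conclusion $H^1_{p}=0$ for $p\geq1$ in the non-symplectic cases, and hence $\operatorname{prol}(\fg_-,\fg_0)\cong\fg$, follows from Steps 1 and 2.
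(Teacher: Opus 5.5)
Your proposal is correct, and it is essentially the argument behind this statement. The paper gives no proof of its own; it quotes Yamaguchi's Theorem 5.2. That theorem rests on Tanaka's identification $\operatorname{prol}_p/\fg_p\cong H^1(\fg_-,\fg)_p$ for $p\geq1$, combined with Kostant's computation of $H^1(\fg_-,\fg)$. Your observation that crossed nodes with $\langle\theta,\alpha_i^\vee\rangle=2$ occur only in type $C$ is exactly what singles out the symplectic case.

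Two small points:
\begin{itemize}
\item In Step 1, the span of iterated $\fg_+$-brackets of $X$ need not be $\fg_0$-stable. Bracket with $\fg_0\oplus\fg_+$ instead. Alternatively, note that ${\rm ad}\,\fg_{-2}$ is injective on $\fg_1$ and on $\fg_2$, by $\mathfrak{sl}_2$-theory for the root $\theta$.
\item In Step 3, no calibration of conventions is needed. The homogeneity of each Kostant component is $\pm(1-\langle\theta,\alpha_i^\vee\rangle)$, which vanishes outside type $C$ whichever sign convention you use.
\end{itemize}
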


Now we are ready to prove Theorem \ref{t.cLM}.


\begin{proof}[Proof of Theorem \ref{t.cLM}]
Set $\widetilde S_z^k:=S_z^k$ for $k\le2$.  For $k\ge3$, define
$$
\widetilde S_z^k:=
\left\{
F\in U_k(\fn_z)^* \ \middle|\
\begin{array}{ll}
\sX_v(F)\in\widetilde S_z^{k-1} & \mbox{for all }v\in\fn_{z,1},\\
\sX_v(F)\in\widetilde S_z^{k-2} & \mbox{for all }v\in\fn_{z,2}
\end{array}
\right\}.
$$
The adjoint variety is homogeneous, so the conclusion of Theorem
\ref{t.cCartan}, initially stated at a general point, holds at the chosen
point $z$.  Hence $S_z^k\subset\widetilde S_z^k$ for all $k$.  It suffices to
prove
\begin{equation}\label{eq.equ}
\dim S_z^k=\dim\widetilde S_z^k\qquad(k\ge3).
\end{equation}

We can identify a Zariski-open neighborhood of $z$  with the Heisenberg
group of the symbol algebra $\fn_z$.  Regarding $U(\fn_z)^*$ as polynomial
functions on this neighborhood, Proposition \ref{p.Heis} gives the injective homomorphism 
 into the Lie algebra $\fc$ of contactomorphic vector fields on $\mathcal H$:
$$
\kappa:U(\fn_z)^*\longrightarrow \fc,\qquad
F\longmapsto \overrightarrow F .
$$
Lemma \ref{l.bracket} gives
\begin{equation}\label{eq.kappa}
\kappa(\sX_v(F))=[\sX_v,\kappa(F)]\qquad(v\in\fn_z).
\end{equation}
For $i\ge-2$, put
$$
\fa_i:=\kappa(\widetilde S_z^{i+2}).
$$
Since $\widetilde S_z^k=S_z^k$ for $k\le2$, Lemma \ref{l.adjoint} gives an isomorphism 
$$
\fa_{-2}\oplus\fa_{-1}\oplus\fa_0
\simeq
\fg_{-2}\oplus\fg_{-1}\oplus\fg_0
$$
as graded Lie algebras.

The defining conditions of $\widetilde S_z^{i+2}$ and \eqref{eq.kappa} give
$$
[\fa_i,\fa_{-1}]\subset\fa_{i-1},\qquad
[\fa_i,\fa_{-2}]\subset\fa_{i-2}\qquad(i\ge1).
$$
For a function $F$ on $\sH$, if   $X=\kappa(F)\in\fa_i, i \geq 1$ commutes with
$\fa_{-2}\oplus\fa_{-1}$, then all derivatives of $F$ with respect to the right-invariant vector fields vanish.
Thus $F$ is constant on the Heisenberg group.  Since $F$ is weighted
homogeneous of degree $i+2\ge3$, it is zero, and $X=0$.  Lemma
\ref{l.universal}, applied to $d =1, V_i = \fa_i$, gives
$$
\fa_i\hookrightarrow \operatorname{prol}(\fg_-,\fg_0)_i .
$$

Assume now that $\fg$ is not of symplectic type.  By Theorem
\ref{t.Yamaguchi},
$
\operatorname{prol}(\fg_-,\fg_0)\simeq\fg .
$
Hence, for $i\ge1$,
$$
\dim\widetilde S_z^{i+2}=\dim\fa_i\le\dim\fg_i .
$$
On the other hand, $S_z^{i+2}\subset\widetilde S_z^{i+2}$ and Lemma
\ref{l.adjoint} gives $\dim S_z^{i+2}=\dim\fg_i$.  Therefore
$$
\dim\fg_i=\dim S_z^{i+2}\le
\dim\widetilde S_z^{i+2}\le\dim\fg_i,
$$
so equality holds throughout.  This proves \eqref{eq.equ}, hence Theorem
\ref{t.cLM}.
\end{proof}

\begin{remark}\label{r.symplectic-exception}
For symplectic $\fg$, the equality in Theorem \ref{t.cLM} fails in weighted
degree $3$.  The adjoint variety is the second Veronese embedding of
projective space.  At a point $z$, the contact fundamental form is 
$$
S_z^0=\C,\quad S_z^1=E^*,\quad
S_z^2=\Sym^2E^*\oplus\C\tau,\quad
S_z^3=E^*\tau,\quad S_z^4=\C\tau^2,
$$ and $S_z^k=0$ for $k\ge5$, for some vector space $E$ and an element $\tau$ of weight 2.  If $0\ne F\in\Sym^3E^*\subset U_3(\fn_z)^*$, then  $F\in\widetilde S_z^3$ (in the notation of the proof of Theorem \ref{t.cLM}), but $F\notin S_z^3$.
\end{remark}

 \quad \\

Baohua Fu (bhfu@math.ac.cn), State Key Laboratory of Mathematical Sciences, Morningside Center of Mathematics, Academy of Mathematics and Systems Science, Chinese Academy of Sciences, Beijing 100190, China;   and School of Mathematical Sciences, University of Chinese Academy of Sciences, Beijing, China \\

Jun-Muk Hwang (jmhwang@ibs.re.kr),  Center for Complex Geometry, Institute for Basic Science (IBS), Daejeon 34126, Republic of Korea

 \end{document}